\documentclass[reqno]{amsart}

\usepackage{mathrsfs}

\usepackage{amssymb}
\usepackage{enumitem}
\usepackage{thm-restate}
\usepackage{tikz}
\usepackage{undertilde}
\usepackage[backref, hypertexnames=false]{hyperref}
\usepackage{stmaryrd}
\usepackage{mdframed}
\usepackage{nicefrac}

\usepackage{todonotes}
\usepackage{enumitem}
\usepackage{orcidlink}

\theoremstyle{definition}
\newtheorem{thm}{Theorem}[section]
\newtheorem{cor}[thm]{Corollary}
\newtheorem{prop}[thm]{Proposition}

\newtheorem{case}[thm]{Case}
\newtheorem{defn}[thm]{Definition}
\newtheorem{ex}[thm]{Example}
\newtheorem{rmk}[thm]{Remark}

\newtheorem*{claim*}{Claim}

\numberwithin{equation}{section}

\newcommand{\aca}{\mathsf{ACA_0^{-}}}

\newcommand{\schoice}{\mathsf{\Sigma^1_1\mbox{-}AC_0^{-}}}
\newcommand{\schoicewo}{\mathsf{\Sigma^1_1\mbox{-}AC_0^{wo}}}
\newcommand{\dcomp}{\mathsf{\Delta^1_1\mbox{-}CA_0^{-}}}

\newcommand{\scomp}{\mathsf{\Sigma^1_1\mbox{-}CA_0^{-}}}

\newcommand{\scompfin}{\mathsf{\Sigma^1_1\mbox{-}CA_0^{fin}}}
\newcommand{\sol}{\mathsf{SOL}}

\usepackage{comment}
\specialcomment{proofdetail}{\color{blue}}{\color{black}}

\excludecomment{proofdetail}

\newcommand{\nocontentsline}[3]{}
\newcommand\stoptoc{%
  \let\origcontentsline\addcontentsline
  \let\addcontentsline\nocontentsline
}
\newcommand\resumetoc{%
  \let\addcontentsline\origcontentsline
}

\begin{document}

\newpage 

\title[Natural language quantifiers are predicatively definable]{The generalized quantifiers of natural language are predicatively definable}

\author{Sean Walsh\,\orcidlink{0009-0007-5460-778X}}

\address{Department of Philosophy \\ University of California, Los Angeles \\ 390 Portola Plaza, Dodd Hall 321 \\ Los Angeles, CA 90095-1451}

\email{walsh@ucla.edu}

\urladdr{http://philosophy.ucla.edu/person/sean-walsh/}


\date{}

\dedicatory{}

\begin{abstract}
\makeatletter\phantomsection\def\@currentlabel{(abstract)}\makeatother
This paper studies the definability of natural language generalized quantifiers. The semantics of generalized quantifiers are provided by a collection of subsets of the underlying domain. However, the generalized quantifiers appearing in natural language are definable either by first-order quantification or by cardinality notions. This paper provides an explanation for this observed phenomenon. The explanation is that the famous constraints of domain independence and conservativity, when extended to Henkin models, suffice to ensure low-level definability, namely $\Delta^1_1$-definability or at least $\Sigma^1_1$-definability; and in most cases this definability can be made to be bounded. This is basically a consequence of Feferman's Preservation Theorem~\cite{Feferman1968-fi}, which Marker \cite{Marker1984aa} has provided a short model-theoretic proof of. Further, we verify that the paradigmatic cardinality quantifiers are indeed $\Delta^1_1$-definable for a reasonable choice of background theory. Finally, in many other cases, we show that this definability can be lowered to first-order definability.
\end{abstract}

\maketitle

\tableofcontents

\section{Introduction}\label{sec:intro}

Generalized quantifiers are a major topic of study at the intersection of semantics and logic.\footnote{The standard references are Keenan-Westerst{\aa}hl \cite{Keenan1997-vc}, Westerst{\aa}hl \cite{Westerstahl2007-fb}, Peters-Westerst{\aa}hl \cite{Peters2008aa}, and Szabolcsi \cite{Szabolcsi2010-vi}. See these references for more of the history, but much of the work originated in Barwise-Cooper \cite{Barwise1981-ur} and Keenan-Stavi \cite{Keenan1986-pq}. Generalized quantifiers were studied earlier in mathematical logic by Mostowski \cite{Mostowski1957-yk}; see Feferman-Barwise \cite{Barwise1985-ys} for more on this tradition. \label{fn:one}} Linguists have isolated several semantic constraints, like domain independence and conservativity, which appear to hold of all generalized quantifiers which appear in natural language. This paper shows that these constraints, when extended to Henkin models, have deep consequences for how difficult it is to define these quantifiers: namely, these constraints require that the quantifiers are $\Delta^1_1$-definable, or in some limiting cases $\Sigma^1_1$-definable; and further in many cases this definability can be made to be bounded (see Theorems~\ref{thm:fefermanesq}, \ref{thm:fefermanesqfour}, \ref{thm:fodefinable}, \ref{thm:fodefinabletwo} below). As we will see, this is basically a consequence of Feferman's Preservation Theorem,\footnote{\cite{Feferman1968-fi}. As Feferman discusses \cite[32]{Feferman1968-fi}, his results generalized the earlier-announced results \cite{Feferman1966-tz} which were joint with Kreisel.} which Marker  has provided a short model-theoretic proof of.\footnote{\cite{Marker1984aa}.}

This is important because if one accepts these semantic constraints of domain independence and conservativity and their extensions to Henkin models, then these Theorems entail that natural language quantifiers are low-level definable, and this explains why so few of the theoretically possible options for generalized quantifiers are realized in natural language, and why they all seem to expressible with simple combinations of  first-order existential quantifiers, first-order universal quantifiers, and cardinality notions. Or, if one takes the task to be to explain both (i)~the semantic constraints of domain independence and conservativity, and (ii)~why so few of the theoretically possible options for generalized quantifiers are realized in natural language, then this paper is important because it shows that a solution to~(i) provides a solution to~(ii), once one extends to Henkin models. 

Finally, the results of this paper are important because on finite models, low-level second-order definability is highly correlated with grades of infeasibility by Fagin's Theorem; there is a similar lesson for Borel sets from Souslin's Theorem, the setting of much ordinary infinitary mathematics (cf. Corollary~\ref{cor:logicapps}). Hence the mathematical theorems in this paper isolate upper bounds on the complexity of natural language quantifiers under two natural measures of complexity. This forms a natural complement to empirical work which tries to assay more precisely the extent of the feasibility and learnability of natural language quantifiers.\footnote{\cite{Hunter2013-qv}, \cite{Szymanik2016-ue}, \cite{Steinert-Threlkeld2020-mm}.}

There are two more minor reasons why these results are important. First, they suggests a certain moderation is in order when discussing the significance of non-first-definability of some generalized quantifiers. To be sure, this is important, and no doubt was more important in earlier periods when first-order logic occupied a more dominant place in philosophical and logical theorizing. But the theorems tells us that the definability is to be had at the immediate next stage beyond first-order logic. Second, the theorems lends plausibility to the idea that approaches to formal semantics are possible even in the absence of full comprehension.\footnote{See (\ref{eqn:compschema}) for a formal statement of the comprehension schema. Intuitively it says that formulas with free first-order variables determine second-order entities.}$^{\;,}$\footnote{The paper \cite{Walsh2016-uo} sets out a predicative intensional theory of types on which one could build such an approach.} For, at least in introductory treatments of formal semantics, the only items in the lexicon whose logical forms have any logical complexity are the generalized quantifiers.\footnote{This is so at least for the lexicon used in \cite[Chapter 6]{Gamut1991-gc}. It is not presently clear to me whether this holds more generally. This paper grew out of an attempt to understand whether the lexicon in \cite[Chapter 6]{Gamut1991-gc} had denotations in the models studied in \cite{Walsh2016-uo}. The first question in \S\ref{sec:further} records that, in this, I have at best met with partial success.} The main Theorems~\ref{thm:fefermanesq}, \ref{thm:fefermanesqfour}, \ref{thm:fodefinable}, \ref{thm:fodefinabletwo} tell us that the amount of comprehension we need for the generalized quantifiers is limited.

This paper is organized as follows. In \S\ref{sec:henkin} we introduce Henkin models and notation for them. In \S\ref{sec:genq} we define generalized quantifiers in a way that applies to Henkin models and recall some paradigmatic examples of generalized quantifiers. In \S\S\ref{sec:henkinconstraints:1}-\ref{sec:henkinconstraints:2} we describe how to define domain independence and conservativity so that they apply to Henkin models. In \S\ref{sec:disjunctiveexamples} we describe some disjunctive examples which are ruled out by domain independence. In \S\ref{sec:defnandboundeddefn}, we formalize our predicative definability and bounded definability notions and state some basic results on absoluteness of these definitions (Theorem~\ref{thm:stable}, Theorem~\ref{thm:internal}). In \S\ref{sec:statement} we state our two main Theorems (Theorems~\ref{thm:fefermanesq}, \ref{thm:fefermanesqfour}). In \S\ref{sec:discuss} we discuss the significance of our results for explanatory projects in formal semantics. In \S\ref{sec:discuss:2} we argue that the motivations for domain independence and conservativity extend from standard models to Henkin models.  In \S\ref{sec:lower} we describe how sometimes first-order definability rather than $\Delta^1_1$-definability can be achieved, and we state Theorems~\ref{thm:fodefinable}, \ref{thm:fodefinabletwo}. In \S\ref{sec:cardinality} we identify a background theory which is apt to handle quantifiers whose truth-conditions are given by recourse to cardinality notions (cf. Theorem~\ref{thmmostdefinabel}). In \S\ref{sec:applications} we discuss the aforementioned applications to computational complexity and descriptive set theory and note how Corollary~\ref{cor:logicapps} follows immediately from Theorem~\ref{thm:fefermanesq}.

The remainder of the paper is more technical and in \S\S\ref{sec:stable}-\ref{sec:cardinalityqs} we turn to the detailed work of proving the theorems. Namely, in \S\ref{sec:stable} we prove Theorem~\ref{thm:stable} and Theorem~\ref{thm:internal}. In \S\ref{sec:henkinish} we introduce Marker's notion of faithful embeddings and relate this to Henkin models as well as to the models we get out of the Compactness Theorem. In \S\ref{sec:proofmain} we prove Theorems~\ref{thm:fefermanesq}, \ref{thm:fefermanesqfour}, in \S\ref{sec:fodefinable} we prove Theorems~\ref{thm:fodefinable}-\ref{thm:fodefinabletwo}, and in \S\ref{sec:cardinalityqs} we prove Theorem~\ref{thmmostdefinabel}. Finally, in \S\ref{sec:further} we remark on a few directions for further research.

\section{Henkin models of second-order logic and its subsystems}\label{sec:henkin}

We work with Henkin models of second-order logic and its subsystems. These are an important class of models since the Completeness Theorem holds for them and their validity relation is recursively enumerable.\footnote{See any treatment of second-order logic, such as \cite{Manzano1996-ff}, \cite{Enderton2001-bm}.} In this section, we introduce notation for Henkin models.\footnote{While the main results of this paper are all explicable using Henkin models, the proofs of these results require fluidity in moving back and forth between Henkin models and a more general class of models, which we review in \S\ref{sec:henkinish}.} Readers familiar with second-order logic may safely skip this section and come back to it as needed.

Henkin models have the form $\mathcal{A}=(A, S_1[A], S_2[A], \ldots)$ where $A$ is a non-empty set and $S_n[A]$ is a non-empty collection of subsets of the $n$-th Cartesian product $A^n$ of $A$. The idea is that $A$ is the first-order domain and that $S_n[A]$ is the domain reserved for the $n$-ary second-order entities. The membership relation is stipulated to be interpreted absolutely, i.e. $\mathcal{A}\models R\overline{a}$ iff $\overline{a}$ is in $R$, for an element $R$ of $S_n[A]$ and an $n$-tuple $\overline{a}=(a_1, \ldots, a_n)$ from $A^n$. A Henkin model $\mathcal{A}=(A, S_1[A], S_2[A], \ldots)$ is \emph{standard} if for all $n\geq 1$ one has $S_n[A]=P(A^n)$, i.e. the interpretation $S_n[A]$ of the range of quantifiers over $n$-ary relation is the full powerset of the $n$-th Cartesian product $A^n$ of the underlying first-order domain~$A$, as judged by the ambient metatheory. The standard models satisfy all instances of the following
\emph{comprehension schema}:
\begin{equation}\label{eqn:compschema}
\exists \; R \; \forall \; \overline{x} \; (R\overline{x} \leftrightarrow \varphi(\overline{x}))
\end{equation}
wherein $\varphi(x_1, \ldots, x_n)$ has free first-order variables $x_1, \ldots, x_n$, along with perhaps other free variables (called \emph{parameters}), and wherein the $n$-ary second-order variable $R$ does not appear free in $\varphi(x_1, \ldots, x_n)$.

Some minimal set-theoretic apparatus is available in any Henkin model $\mathcal{A}$ which satisfies the comprehension schema (\ref{eqn:compschema}) for formulas $\varphi$ which do not contain second-order quantifiers. First, $S_n[A]$ is a Boolean algebra using the obvious formulas: e.g. treating $X,Y$ in $S_1[A]$ as parameters and using the formula $Xx\wedge Yx$, we have a $Z$ such that: for all $x$ in $A$, one has $Zx$ iff $Xx\wedge Yx$; and we write this as $Z=X\cap Y$. We likewise freely use the operations $X\cup Y, X\setminus Y, X\times Y, X^n$ with their usual meaning in the object-language, and for each $n\geq 1$ we use $\emptyset$ for the empty $n$-ary relation (and let context determine which $n\geq 1$ is intended). To sometimes avoid having to display arities, if $R$ is $n$-ary and $X$ is unary, we stipulate 
\begin{equation}\label{eqn:conventionpowers}
R\cap X := R\cap X^n
\end{equation}
To this we add the projection functions: for $n$-ary $R$ and $1\leq i\leq n$ we have:
\begin{equation}
\pi_i R =\{ x_i : \exists \; x_1, \ldots, x_{i-1}, x_{i+1}, \ldots, x_n \; R(x_1, \ldots, x_{i-1}, x_i, x_{i+1}, \ldots, x_n) \}
\end{equation}
as well as the field $\mathscr{U} R = \bigcup_{i=1}^n \pi_i R$ of the $n$-ary relation $R$, 
where the scripted $\mathscr{U}$ is a mnemonic for `union.' We similarly define, for a tuple $\overline{R}=(R_1, \ldots, R_m)$, the union of its fields $\mathscr{U} \overline{R}=\bigcup_{j=1}^m \mathscr{U} R_j$. Further, for each $(n+m)$-ary $R$ and tuple $\overline{x}$ of length $n$, we have the function  $(R,\overline{x})\mapsto R[\overline{x}]$ given by $R[\overline{x}]:=\{\overline{y}: R\overline{x}\,\overline{y}\}$.\footnote{This is used in the definition of the theory $\schoice$. See (\ref{eqn:choice}) below.}  In addition to these operations, we use the subset relation $R\subseteq R^{\prime}$ with its usual meaning.

A further aspect of this minimal set theoretic apparatus is that it is absolute between Henkin models and the metatheory and hence also between two Henkin models, a theme to which we will return to in~\S\ref{sec:defnandboundeddefn}.

\begin{prop}\label{prop:absolutenboolean} (Absoluteness of minimal set theoretic apparatus).

Suppose that $\mathcal{A}$ is a Henkin model which satisfies the Comprehension Schema~(\ref{eqn:compschema}) for all formulas with no second-order quantifiers. 

Suppose that $X,Y,Z$ are in $S_1[A]$. Then one has $X\subseteq Y$ iff $\mathcal{A}\models X\subseteq Y$, and one has $X\cap Y=Z$ iff $\mathcal{A}\models X\cap Y=Z$.

The same result holds for $S_n[A]$ for $n>1$ and for the operations $X\cup Y$, $X\setminus Y$, $X\times Y$, $X^n$, $\pi_i R$, $\mathscr{U}\overline{R}$, and $R[\overline{x}]$.
\end{prop}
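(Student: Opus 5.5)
The plan is to unfold each object-language abbreviation into the first-order formula it stands for, and then use the stipulation that membership in a Henkin model is interpreted absolutely. Let $\mathcal{A}\models\psi$ be one of the claimed statements, say $X\subseteq Y$ or $X\cap Y=Z$. Each such statement abbreviates a formula in which the only quantifiers are first-order and range over $A$. The only atomic formulas are of the form $R\overline{a}$, with $R$ a parameter from some $S_n[A]$.

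For instance, $X\subseteq Y$ abbreviates $\forall x\,(Xx\rightarrow Yx)$, and $X\cap Y=Z$ abbreviates $\forall x\,(Zx\leftrightarrow (Xx\wedge Yx))$. Since the first-order domain of $\mathcal{A}$ is $A$ itself, a straightforward induction on formulas gives the following: for any formula $\varphi(\overline{x})$ with no second-order quantifiers and with parameters from the $S_n[A]$, and any $\overline{a}$ from $A$, one has $\mathcal{A}\models\varphi(\overline{a})$ iff $\varphi(\overline{a})$ holds in the metatheory, reading $R\overline{a}$ as $\overline{a}\in R$.
\begin{itemize}
\item The atomic case is exactly the absoluteness of membership.
\item The Boolean connectives are immediate.
\item The first-order quantifier case uses that $\mathcal{A}$ quantifies over all of $A$.
\end{itemize}

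Applying this lemma to the unfolded formulas gives the first two claims directly. For $X\subseteq Y$, we get $\mathcal{A}\models\forall x\,(Xx\rightarrow Yx)$ iff every $a\in X$ lies in $Y$. For $X\cap Y=Z$, we get that $\mathcal{A}\models\forall x\,(Zx\leftrightarrow Xx\wedge Yx)$ iff $Z$ and $X\cap Y$ have the same elements, which by extensionality in the metatheory means $Z=X\cap Y$.

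For the remaining operations, I would follow the same pattern. Each of $X\cup Y$, $X\setminus Y$, $X\times Y$, $X^n$, $\pi_i R$, $\mathscr{U}\overline{R}$ and $R[\overline{x}]$ is introduced in the paper via a formula $\theta(\overline{y})$ without second-order quantifiers. Examples are $\exists x_1\ldots x_{i-1}x_{i+1}\ldots x_n\, R(\ldots)$ for $\pi_i R$, and $R\overline{x}\,\overline{y}$ for $R[\overline{x}]$. The statement $\mathcal{A}\models W=\mathrm{op}(\ldots)$ means $\mathcal{A}\models\forall\overline{y}\,(W\overline{y}\leftrightarrow\theta(\overline{y}))$. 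By the lemma, this holds iff $W$ equals the metatheoretic set $\{\overline{y}\in A^k:\theta(\overline{y})\}$, and that set is the true value of the operation.

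The comprehension hypothesis is what makes these operations total in $\mathcal{A}$. It guarantees that some $W\in S_k[A]$ witnesses the operation, so the operations are well-defined functions on the $S_n[A]$, and their values in $\mathcal{A}$ coincide with the true values.

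I expect no real obstacle; the only care needed is bookkeeping. First, the arity convention (\ref{eqn:conventionpowers}) must be respected. Second, $\mathscr{U}\overline{R}$ should be unfolded as a finite disjunction of projections. Third, when $n>1$ the same lemma applies verbatim with tuple variables.
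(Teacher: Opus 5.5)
Your proposal is correct and follows essentially the paper's route: unfold each abbreviation into a formula whose only quantifiers are first-order ones over $A$, then use that membership is interpreted absolutely. Your induction lemma just systematizes the paper's case-by-case unfolding for $\subseteq$ and $\cap$ (the paper declares the other operations ``similar''). The one step the paper makes explicit that you use only tacitly is that elements of $S_n[A]$ are subsets of $A^n$ (the paper's ``since $X$ is a subset of $A$''). That fact is what lets you pass from ``every $a\in A$ with $a\in X$ lies in $Y$'' to $X\subseteq Y$, and what justifies identifying $\{\overline{y}\in A^k:\theta(\overline{y})\}$ with the true value of the operation.
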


\begin{proof}
For subset, we argue as follows:
\begin{itemize}[leftmargin=*]
    \item First suppose $X\subseteq Y$. Then for all $x$, if $x$ in $X$, then $x$ in $Y$. Then for all $x$ in $A$, if $x$ in $X$, then $x$ in $Y$. Then $\mathcal{A}\models X\subseteq Y$.
    \item Second suppose $\mathcal{A}\models X\subseteq Y$. Then for all $x$ in $A$, if $x$ in $X$, then $x$ in $Y$. Since $X$ is a subset of $A$, this implies $X\subseteq Y$.    
\end{itemize}
For intersection, we argue as follows:
\begin{itemize}[leftmargin=*]
    \item First suppose that $X\cap Y=Z$. Then for all $x$: $x$ is in both $X,Y$ iff $x$ in $Z$. Then for all $x$ in $A$: $x$ is in both $X,Y$ iff $x$ in $Z$. Then $\mathcal{A}\models X\cap Y=Z$.
    \item Second suppose $\mathcal{A}\models X\cap Y=Z$. Then for all $x$ in $A$: $x$ is in both $X,Y$ iff $x$ in $Z$. Since $X,Y,Z$ are subsets of $A$, we have that for all $x$: $x$ is in both $X,Y$ iff $x$ in $Z$. Then $X\cap Y=Z$.    
\end{itemize}
The proofs for $n>1$ and the other operations listed are similar.
\end{proof}

Due to their absoluteness and their definability in systems which have comprehension for formulas without second-order quantifiers, we may regard ourselves as working in a definitional expansion of second-order logic which has constant, relation, and function symbols for all this minimal set-theoretic apparatus, namely: $\emptyset$, $X\subseteq Y$, $X\cap Y$, $X\cup Y$, $X\setminus Y$, $X\times Y$, $X^n$, $\pi_i R$, $\mathscr{U}\overline{R}$, and $R[\overline{x}]$.

However, occasionally we avail ourselves of non-absolute definitions. For instance, we occasionally use $\mathbb{V}:=\{x: x=x\}$ as an abbreviation for the universal unary relation. Likewise, we sometimes use $Y^c:=\mathbb{V}\setminus Y$ as an abbreviation for the relative complement of $Y$. But since these are not absolute, we do not add constants and function symbols for these and regard them as mere abbreviations. 

As in other areas, it is convenient to allow constant symbols, both for individuals and for $n$-ary relations. As usual, what constant symbols are available in a given context is specified by the signature. For instance, in a setting where one wants names for individuals, one works in a signature with constants $c,d,\ldots$, and any model $\mathcal{A}$ in this signature comes equipped with interpretations $c^{\mathcal{A}}, d^{\mathcal{A}}, \ldots$ from $A$. Likewise, in a setting where one wants named binary relations for e.g. orders or graphs of functions, one simply works in a signature with binary relation symbols $\preceq, g, \ldots$ on individuals, and any model $\mathcal{A}$ in this signature comes equipped with interpretations $\preceq^{\mathcal{A}}, g^{\mathcal{A}}$ which are subsets of $A^2$. And similarly for $n$-ary relations for values besides $n=2$. Since we always work with at least comprehension for formulas with no second-order quantifiers, the interpretation $C^{\mathcal{A}}$ of an $n$-ary relation symbol~$C$ in a Henkin model $\mathcal{A}$ is identical to an element of $S_n[A]$, and in particular $C^{\mathcal{A}}$ is identical to the set of $n$-tuples $\overline{x}$ from $A^n$ which satisfy the $n$-ary relation $C$ on $\mathcal{A}$. Hence even though we allow extra-logical relation symbols beyond membership, all atomics are equivalent to those formed from membership and identity.

As a deductive theory, \emph{second-order logic} $\sol$ just consists of the comprehension schema~(\ref{eqn:compschema}) and the extensionality axioms, one for each $n\geq 1$, where $R,S$ are $n$-ary:
\begin{equation}
\forall \; R \; \forall \; S \; \bigg(R=S\leftrightarrow \forall \; \overline{x} \; (R\overline{x}\leftrightarrow S\overline{x})\bigg)
\end{equation}
together with the explicit definitions of the minimal set-theoretic apparatus described above. One generates subsystems of second-order logic by keeping extensionality and restricting the comprehension schema. A \emph{$\Sigma^1_1$-formula} is one of the form $\exists \; R_1, \ldots, R_{\ell} \; \varphi$, where $\varphi$ does not contain any second-order quantifiers. A \emph{$\Pi^1_1$-formula} is one of the form $\forall \; R_1, \ldots, R_{\ell}  \; \varphi$, where $\varphi$ does not contain any second-order quantifiers. A formula $\varphi$ of second-order logic is \emph{first-order} if it does not contain any second-order quantifiers. Since the minimal set-theoretic apparatus is all first-order definable, its inclusion does not impact what is $\Sigma^1_1$-definable, $\Pi^1_1$-definable, and first-order definable.

A traditional predicative subsystem is \emph{$\Delta^1_1$-comprehension}, abbreviated $\dcomp$, which consists of the extensionality axioms and the following \emph{$\Delta^1_1$-comprehension schema}:
\begin{equation}
\forall \; \overline{x} \; (\varphi(\overline{x}) \leftrightarrow \psi(\overline{x}))\rightarrow \exists \; R \; \forall \; \overline{x} \; (R\overline{x} \leftrightarrow \varphi(\overline{x}))
\end{equation}
wherein $\varphi$ is $\Sigma^1_1$ and $\psi$ is $\Pi^1_1$. Another traditional predicative system is \emph{$\Sigma^1_1$-choice}, abbreviated $\schoice$ which consists of the extensionality axioms, the comprehension schema (\ref{eqn:compschema}) restricted to first-order formulas, and the following \emph{$\Sigma^1_1$-choice schema}:
\begin{equation}\label{eqn:choice}
\forall \; \overline{x} \; \exists \; R \; \varphi(\overline{x}, R)\rightarrow \exists \; R^{\prime} \; \forall \; \overline{x} \; \varphi(\overline{x}, R^{\prime}[\overline{x}])
\end{equation}
wherein $\varphi(\overline{x}, R)$ is $\Sigma^1_1$ and $R^{\prime}$ is $(n+m)$-ary and $\overline{x}$ is $n$-ary and $R$ is $m$-ary. Finally, the system $\aca$ is simply the extensionality axioms and all instances of the comprehension schema~(\ref{eqn:compschema}) such that the formula $\varphi$ does not contain second-order quantifiers. In addition to these three predicative theories, a traditional intermediary impredicative system is $\scomp$, which consists of the extensionality axioms and the comprehension schema (\ref{eqn:compschema}) for $\Sigma^1_1$-formulas. The names of these subsystems of $\sol$ are taken from the corresponding well-studied theories of second-order arithmetic (\cite{Simpson2009-ra}), but we have added a minus superscript to them to disambiguate and to indicate that they are logical rather than arithmetical in nature.

In any of these systems of second-order logic, one can readily regiment statements about cardinality. Whereas in ordinary set-theoretic mathematics, the cardinality operator $\left|\cdot\right|$ is function from sets to cardinals, in second-order logic one defines the comparative notions $\left|X\right|\leq \left|Y\right|$ and $\left|X\right|< \left|Y\right|$ explicitly as follows:

\begin{defn}\label{defn:cardinality} (Cardinality notions in second-order logic).

One defines $\left|X\right|\leq \left|Y\right|$ if there is an injection $f:X\rightarrow Y$. 

One defines $\left|X\right|< \left|Y\right|$ if both $\left|X\right|\leq \left|Y\right| $ and not $\left|Y\right|\leq \left|X\right|$.

One says that $X$ is \emph{Dedekind infinite} if there is a function $f:X\rightarrow X$ which is injective but not surjective.

One says that $X$ is \emph{Dedekind finite} if $X$ is not Dedekind infinite.

\end{defn}

\noindent Of course, functions $f:X\rightarrow Y$ are identified with their graph as a subrelation of $X\times Y$. Being able to define such notions is one of the chief advantages of working with polyadic as opposed to monadic second-order logic: namely, quantification over binary relations allows one to quantify over functions between different unary second-order entities in addition to being able to quantify over the second-order entities themselves. That said, there are other perspectives from which polyadic second-order quantification is a disadvantage. For instance, plurals are thought to be monadic and not polyadic,\footnote{\cite[117 ff]{Florio2021-el}.} and there are many decidability methods for monadic second-order theories which have no polyadic analogue.\footnote{\cite[Chapter 13]{Barwise1985-ys}.}

\section{Generalized quantifiers: definition and examples}\label{sec:genq}

Second-order logic is an extremely expressive system, even when comprehension is restricted. It is thus a good framework to study generalize quantifiers, since the truth-conditions for these quantifiers are often given by a set-theoretic formula which can be readily transcribed into the object-language of second-order logic. Thus, in this paper, we define:

\begin{defn}\label{defn:gq} (Generalized quantifier).
A \emph{generalized quantifier} is a formula $Q(R_1, \ldots, R_{\ell})$ of second-order logic whose only free variables are the displayed second-order variables $R_1, \ldots R_{\ell}$ with respective arities $n_1, \ldots, n_{\ell}$. 
\end{defn}
\noindent This is abbreviated by saying that the generalized quantifier $Q(R_1, \ldots, R_{\ell})$ is of type $( n_1, \ldots, n_{\ell})$. If the particular values of $n_1, \ldots, n_{\ell}$ are not important, such as when one is just stating general claims about all generalized quantifiers, one further abbreviates this by saying that $Q(\overline{R})$ is of type $( \overline{n})$, or for simplicity that $Q(\overline{R})$ is of type $\overline{n}$. Note that the type $\overline{n}$ specifies both the length of the sequence  $\overline{R}$ as well as the arities of its individual entries. In a Henkin model $\mathcal{A}$, one correspondingly works with elements $(R_1, \ldots, R_{\ell})$ of $S_{n_1}[A]\times \cdots \times S_{n_{\ell}}[A]$, and we similarly just abbreviate this as the element $\overline{R}$ of $S_{\,\overline{n}\,}[A]$.

The tradition of generalized quantifiers typically works with standard models. If $Q(R_1, \ldots, R_{\ell})$ is a generalized quantifier in the sense of Definition~\ref{defn:gq} and $\mathcal{M}$ is a standard model, then the interpretation of $Q(R_1, \ldots, R_{\ell})$ on $\mathcal{M}$ is simply given by a subset of $P(M^{n_1})\times \cdots \times P(M^{n_{\ell}})$. Indeed, this is the traditional understanding of the semantic value of a generalized quantifier within linguistics, namely, it is a collection of tuples of subsets of Cartesian products of the underlying domain. In restricting to collections which are defined by second-order formulas, Definition~\ref{defn:gq} is more restrictive than this tradition; although, as the subsequent list of examples suggests, nothing is lost by this restriction. In being applicable to Henkin models which are not standard models, Definition~\ref{defn:gq} is more expansive than this tradition. It is a chief burden of this paper to make a case that this is a fruitful expansion.

Here are some famous examples of generalized quantifiers of type $(1,1)$, where we list their truth-conditions followed by an example:
\begin{ex}\label{ex:all}
$\mathsf{All}(G,H)$ iff $\forall \; x \; (Gx\rightarrow Hx)$. All Geography majors are happy.
\end{ex}
\begin{ex}\label{ex:some}
$\mathsf{Some}(G,H)$ iff $\exists \; x \; (Gx\wedge Hx)$. Some Geography majors are happy.
\end{ex}
\noindent Of course, in the tradition of Aristotelian logic, one defines the universal so that it has existential import:\footnote{\cite[24]{Peters2008aa}. See the conversions in the \emph{Prior Analytics} I.2.25$^a$10 (\cite{Aristotle2014-al}).}
\begin{ex}\label{ex:allar}
$\mathsf{All}^{\ast}(G,H)$ iff $\forall \; x \; (Gx\rightarrow Hx) \wedge \exists \; x \; Gx$. All Geography majors are happy.
\end{ex}
\noindent Other familiar type $(1,1)$ generalized quantifiers are the following:\footnote{\cite[121, 62]{Peters2008aa}. For discussion of why `two' means `exactly two' rather than `at least two', see \cite[\S{9.2}]{Szabolcsi2010-vi}. For discussion of how `most' can be understood as a superlative, see \cite{Hackl2009-mj}, \cite[\S{5.6}]{Szabolcsi2010-vi}.}
\begin{ex}\label{ex:two}
$\mathsf{Two}(F,G)$ iff $\exists \; y_1 \; \exists \; y_2 \; Fy_1\wedge Fy_2\wedge y_1\neq y_2 \wedge Gy_1\wedge Gy_2 \wedge \forall \; z \; ((Fz\wedge Gz)\rightarrow (z=y_1 \vee z=y_2))$. Two French majors are Geography majors.
\end{ex}
\begin{restatable}{ex}{exmost}\label{ex:most}
$\mathsf{Most}(F,G)$ iff $\left|F\cap G\right|>\left|F\setminus G\right|$. Most French majors go to study abroad.
\end{restatable}
\noindent In this, we use the regimentation of $\left|X\right|>\left|Y\right|$ from Definition~\ref{defn:cardinality}.

But there are many examples of natural quantifiers of different type than $(1,1)$. Here is an example of a type (1) quantifier associated to Montagovian individuals:\footnote{\cite[93-94]{Peters2008aa}. See \cite[806-807]{Glanzberg2006-is} for a balanced discussion of the importance of including this example.\label{fn:john}}

\begin{ex}\label{ex:montagueindivdiaul}
$\mathsf{Christopher}(G)$ iff $Gc$, where $c$ is an individual constant symbol for the individual Christopher. If most French majors go to study abroad then Christopher goes to study abroad.
\end{ex}
\noindent Here is an example of type $(1,1,1)$:\footnote{\cite[154]{Peters2008aa}.}
\begin{ex}\label{ex:more}
$\mathsf{More\mbox{-}than}(F,G,H)$ iff $\left|F\cap H\right|>\left|G\cap H\right|$. More French majors than German majors are honors students.
\end{ex}
\noindent Here is an example of type $(2,2)$:\footnote{\cite[352]{Peters2008aa}, \cite[1080]{Keenan2011-pt}. See \cite[Chapter 3]{de-Swart1993-sy} for a general study of adverbs using ideas from generalized quantifiers.}
\begin{ex}\label{ex:usually}
$\mathsf{Usually}(R,R^{\prime})$ iff $\left|R\cap R^{\prime}\right|>\left|R\setminus R^{\prime}\right|$. Cats usually like dogs, which is regimented as: there are more ordered pairs $R$ of cats and dogs that stand in the like relation $R^{\prime}$ than there are ordered pairs of $R$ of cats and dogs that stand in the dislike relation~$\neg R^{\prime}$.
\end{ex}

Lastly, consider the following generalized quantifier of type $(1,1,2)$:\footnote{\cite[60 equation (19), 63 equation (t)]{Barwise1979-ns}, \cite[71, 363]{Peters2008aa}, \cite[110]{Szymanik2016-ue}.} 
\begin{ex}\label{ex:branchmost}
$\mathsf{Branching\mbox{-}most}(F,G,R)$ iff $\exists \; F_0\subseteq F \; \exists \; G_0\subseteq G \; \mathsf{Most}(F,F_0) \wedge \mathsf{Most}(G,G_0) \wedge F_0\times G_0\subseteq R$. Most French majors and most Geography majors all respect one another.
\end{ex}
\noindent This is Barwise's template for Hintikka's branching quantifiers, where the generalization emerges by replacing one or both instances of $\mathsf{Most}$ by other type $(1,1)$ generalized quantifiers and perhaps adding more than two of them or just one of them (cf. (\ref{eqn:rebarwisebranching})). Barwise's example is intended to have the stated truth conditions, and hence it is distinct from $\mathsf{Usually}(F\times G,R)$. However, it seems plausible that the truth conditions of Barwise's example derives from the reciprocal ``one another'' or  the ``all'' in the verb phrase,\footnote{\cite[78 ff]{Hoeksema1983-jo}, \cite[165]{Steedman2011-co}, \cite[432]{May1989-ag}, \cite[258]{Liu1996-ma}. But it has been argued that the reciprocal itself has the semantics of a generalized quantifier (cf. \cite[Chapter 8]{Szymanik2016-ue}).} and hence for reasons ultimately extrinsic to the generalized quantifier in the noun phrase. There is then some reason to regard Example~\ref{ex:branchmost} as slightly set apart from the previous examples.\footnote{In this connection, it should be noted that branching quantification is mentioned only in passing, or not at all, in many recent surveys (\cite{Glanzberg2006-is}, \cite{Szabolcsi2010-vi}, \cite{Keenan2011-pt}, \cite{Keenan2012-nv}, \cite{Partee2012-on}, \cite{Sternefeld2020-vf}).}

\section{Substructures and Henkin domain independence}\label{sec:henkinconstraints:1}

In formal semantics, one of the main conditions on natural language quantifiers is the \emph{domain independence condition},\footnote{For the original notion on standard models, see \cite[855-856]{Keenan1997-vc}, \cite[250, Definition 17]{Gamut1991-gc}, \cite[279]{Westerstahl2007-fb}, \cite[101, 108]{Peters2008aa}, \cite[\S{4.2.2.1}]{Szabolcsi2010-vi}, \cite[1067]{Keenan2011-pt}. The concept is originally due to van Benthem, see \cite[1067]{Keenan2011-pt} and  \cite[8]{van-Benthem1986-qu}.} which says that the quantifier is both upwards and downwards invariant under expansions of the first-order domain. The natural notion of \emph{expansion} in Henkin models is the model-theoretic notion of substructure:

\begin{defn}\label{defn:substructure} (Substructure).

Suppose that $\mathcal{A}, \mathcal{B}$ are two Henkin models. Then $\mathcal{A}$ is a \emph{substructure} of $\mathcal{B}$ if $A\subseteq B$ and $S_n[A]\subseteq S_n[B]$ for all $n\geq 1$, and if for all individual constant symbols $c$ in the signature, $c^{\mathcal{B}}=c^{\mathcal{A}}$ and for all $n$-ary relation symbols $C$ in the signature, $C^{\mathcal{B}}\cap A^n = C^{\mathcal{A}}$.
\end{defn}
\noindent If the two models are standard and one is working in signature with no extra-logical constants, then this happens iff $A\subseteq B$. But note that if $\mathcal{A}$ is merely a Henkin model, then specifying an extension requires not only specifying a superset $B$ of $A$ but also an associated sequence of supersets $S_n[B]$ of $S_n[A]$ which satisfy $S_n[B]\subseteq P(B^n)$.
\begin{defn} (The standard superstructure).

Suppose $\mathcal{A}$ is a Henkin model, which may or may not be standard. Let $\mathcal{B}$ be the Henkin model given by $B=A$ and $S_n[B]=P(A^n)$ for all $n\geq 1$ and by interpreting the constant symbols the same. We call $\mathcal{B}$ the \emph{standard superstructure} of $\mathcal{A}$.
\end{defn}
\noindent In this case, the superstructure is formed by keeping the first-order domain the same and by expanding the domains associated to the higher-order objects by recourse to a metatheoretic operation. Another kind of example is where we shrink the first-order domain and correspondingly take the second-order objects to be their intersection with the new first-order domain:
\begin{defn}\label{defn:internal} (Internal substructures).

Let $\mathcal{B}$ be a Henkin model. Let $A$ be a non-empty set in $S_1[B]$ which contains $c^{\mathcal{B}}$ for all individual constant symbols $c$. Then the \emph{internal substructure $\mathcal{A}$ of $\mathcal{B}$ associated to $A$} is the Henkin model with first-order domain $A$ and with $S_n[A]=S_n[B]\cap P(A^n)$ for all $n\geq 1$, and with $c^{\mathcal{A}}=c^{\mathcal{B}}$ for all individual constant symbols $c$ in the signature and with $C^{\mathcal{A}}=C^{\mathcal{B}}\cap A^n$ for all $n$-ary relation symbols $C$ in the signature. 
\end{defn}
Note that in this case, if $\mathcal{B}$ is standard then $\mathcal{A}$ is standard. 

For some concrete examples of pairs of Henkin models $\mathcal{A},\mathcal{B}$ where $\mathcal{A}$ is a substructure of $\mathcal{B}$ and $A=B$ while $S_n[A]\subsetneq S_n[B]$ for $n\geq 1$, see Examples~\ref{ex:disjunction:2}-\ref{ex:disjunction:3} in~\S\ref{sec:disjunctiveexamples}. One can also produce such examples by using the Downward L\"owenheim-Skolem Theorem.\footnote{Followed by an application of Proposition~\ref{prop:iso}.}

We draw attention to a feature of many theories which is related to internal substructures:
\begin{defn}\label{defn:closeddownardinternal} (Closed downwards under internal substructures).

A theory $T$ is \emph{closed downward under internal substructures} if for all Henkin models $\mathcal{B}$ of $T$ and all internal substructures $\mathcal{A}$ of $\mathcal{B}$, one has that $\mathcal{A}$ is a model of $T$ as well. 
\end{defn}
\noindent Many logical theories, including $\mathsf{ACA}_0^{-}$, $\mathsf{\Delta^1_1\mbox{-}CA}_0^{-}$, $\schoice$, $\scomp$ and $\sol$, are closed downward under internal substructures. This is because the formula $\varphi$ in the Comprehension Schema~(\ref{eqn:compschema}) is allowed to use $A$ as a parameter and hence one can bind all the quantifiers in $\varphi$ to $A$ without increasing the relevant kinds of complexity (cf. Definition~\ref{defn:binding} for the definition of binding).

We now define the generalization of domain independence to Henkin models:
\begin{defn}\label{defn:HDI} (Henkin independence notions).

Suppose that $T$ is a theory and $Q(\overline{R})$ is a generalized quantifier. Then we say that $Q$ is \emph{Henkin upwards independent  relative to $T$} if for all pairs of Henkin models $\mathcal{A}, \mathcal{B}$ of $T$, if $\mathcal{A}$ is a substructure of $\mathcal{B}$, then for all $\overline{R}$ from $S_{\,\overline{n}\,}[A]$:
\begin{equation}\label{eqn:ext:2}
\mathcal{A}\models Q(\overline{R}) \Longrightarrow  \mathcal{B}\models Q(\overline{R})
\end{equation}
If the arrow is reversed, we say that $Q$ is \emph{Henkin downwards independent relative to $T$}. If the biconditional holds, then we say that $Q$ is \emph{Henkin domain independent relative to $T$}. If $T$ is clear from context, then we omit ``relative to $T$'' for brevity. 
\end{defn}

We look at simple examples which violate Henkin domain independence in \S\ref{sec:disjunctiveexamples}. We discuss the motivations for Henkin domain independence in \S\ref{sec:discuss:2}.

\section{Henkin conservativity}\label{sec:henkinconstraints:2}

In order to define conservativity,\footnote{For the original notion on standard models, see \cite[852-853]{Keenan1997-vc}, \cite[255]{Westerstahl2007-fb}, \cite[138]{Peters2008aa}, \cite[\S{4.2.2.1}]{Szabolcsi2010-vi}, \cite[1066]{Keenan2011-pt} for the case of type $(1,1)$, and see \cite[77]{Keenan1985-cq} for the general case. Conservativity was first defined by Keenan \cite[371, Definition 9]{Keenan1981-iv}; see also \cite[275]{Keenan1986-pq}. See Barwise-Cooper \cite[170, 178-179]{Barwise1981-ur} for the closely related notion of ``living on.''} one has to work with generalized quantifiers $Q(\overline{R},\overline{R}^{\prime})$ where the free variables are split into two non-empty parts $\overline{R},\overline{R}^{\prime}$. To avoid a great deal of notational clutter, we are going to suppose that the tuple $\overline{R}$ comes before the tuple $\overline{R}^{\prime}$ in the enumeration of the free variables in $Q(\overline{R},\overline{R}^{\prime})$, although as one will see that nothing in what follows depends on this.\footnote{Some flexibility as to the placement of the tuple has proven useful in the linguistics literature as well, and is one of the responses to the apparent non-conservativity of ``only.'' See the discussion of weak conservativity in \cite[578]{Zuber2019-lc}.} We define:
\begin{defn}\label{defn:Hcon} (Henkin conservative).

Suppose that $T$ is a theory and $Q(\overline{R},\overline{R}^{\prime})$ is a generalized quantifier.  Then we say that $Q$ is \emph{Henkin conservative relative to $T$} if $T$ proves:
\begin{equation}\label{eqn:henkinconservation}
\forall \; \overline{R} \; \forall \; \overline{R}^{\prime} \; \big(Q(\overline{R}, \overline{R}^{\prime})\leftrightarrow Q(\overline{R}, \mathscr{U}\overline{R}\cap \overline{R}^{\prime})\big)
\end{equation}
If $T$ is clear from context, then we omit ``relative to $T$'' for brevity. 

\end{defn}

In equation~(\ref{eqn:henkinconservation}), the notation $\mathscr{U}\overline{R}\cap \overline{R}^{\prime}$ means the tuple of the same length as $\overline{R}^{\prime} = (R_1^{\prime}, \ldots, R_{\ell^{\prime}}^{\prime})$ whose $i$-th component is $\mathscr{U}\overline{R}\cap R_i^{\prime}$, where we use the convention from (\ref{eqn:conventionpowers}). 

The content of Henkin conservation is best understood by way of example:
\begin{ex}
If $Q(X,Y)$ is of type $( 1,1)$, such as in Examples~\ref{ex:all}-~\ref{ex:most}, then (\ref{eqn:henkinconservation}) is simply the universal closure of $Q(X,Y)\leftrightarrow Q(X,X\cap Y)$.
\end{ex}
\noindent To spell this out with the sentence from Example~\ref{ex:most}, the biconditional $Q(X,Y)\leftrightarrow Q(X,X\cap Y)$ is: most French majors go to study abroad iff most French majors are French majors who go to study abroad. The intuitiveness of biconditionals like this is some of the primary evidence for conservativity.\footnote{\cite[853]{Keenan1997-vc}, \cite[138-139]{Peters2008aa}, \cite[801]{Glanzberg2006-is}.}
\begin{ex}\label{ex:type111conserv}
If $Q(X,Y,Z)$ is of type $( 1,1,1)$ as in Example~\ref{ex:more} with the partition between $X,Y$ and $Z$, then (\ref{eqn:henkinconservation}) is simply the universal closure of $Q(X,Y,Z)\leftrightarrow Q(X, Y, (X\cup Y)\cap Z)$.
\end{ex}
\begin{ex}
If $Q(R,R^{\prime})$ is of type $( 2,2)$ as in Example~\ref{ex:usually}, then (\ref{eqn:henkinconservation}) is simply the universal closure of $Q(R,R^{\prime})\leftrightarrow Q(R,(\mathscr{U}R)^2\cap R^{\prime})$.
\end{ex}

We discuss further the motivations for Henkin conservativity in \S\ref{sec:discuss:2}.

Henkin conservativity has the following consequence for the empty relation:
\begin{rmk}\label{rmk:consequence:henkinempty} (Henkin conservativity and the empty relation).

Suppose that $Q(\overline{R},\overline{R}^{\prime})$ is Henkin conservative relative to $T$. Then $T$ proves all three of the following:
\begin{equation*}
\forall \;\overline{R}^{\prime} \; \big(Q(\emptyset, \overline{R}^{\prime})\leftrightarrow Q(\emptyset,\emptyset)\big), \hspace{5mm}  Q(\emptyset,\emptyset)\leftrightarrow \forall \;\overline{R}^{\prime} \; Q(\emptyset, \overline{R}^{\prime}), \hspace{5mm}  Q(\emptyset,\emptyset)\leftrightarrow \exists \;\overline{R}^{\prime} \; Q(\emptyset, \overline{R}^{\prime})
\end{equation*}
The first of these follows because $\overline{R}$ is empty iff its field $\mathscr{U}\overline{R}$ is empty, and the intersection of anything with the empty relation is the empty relation. The other two follow from the first by elementary quantifier manipulations.
\end{rmk}

\section{The disjunctive examples}\label{sec:disjunctiveexamples}

Below are three examples of quantifiers that are \emph{not} Henkin domain independent but are Henkin conservative. This kind of example is well-known.\footnote{See Keenan \cite[\S{4}, immediately below (37), p. 56]{Keenan1996-yi}; and \cite[152]{de-Swart1993-sy}, \cite[251]{Gamut1991-gc}.} We elaborate on it by producing examples where the disjunctive behavior pertains to both the first- and second-order part.

\begin{ex}\label{ex:disjunction:1} (Disjunctive Example 1).

Let $Q(X,Y)$ be $(\neg \varphi_3 \rightarrow X\cap Y\neq \emptyset)\wedge (\varphi_3 \rightarrow X\subseteq Y)$, where $\varphi_3$ is $\exists \; x \; \exists \; y \; \exists \; z \; x\neq y \wedge y\neq z \wedge x\neq z$. That is, $Q(X,Y)$ is the type $(1,1)$ quantifier which is the existential quantifier if there are fewer than three elements in the domain, and which is the universal quantifier if there are at least three elements in the domain.

Consider a two element standard model $\mathcal{A}$ and a three element standard model~$\mathcal{B}$ such that $\mathcal{A}$ is a substructure of $\mathcal{B}$. 

Choose $X,Y$ in $S_1[A]$ such that $X\cap Y\neq \emptyset$ but not $X\subseteq Y$. Then $\mathcal{A}\models Q(X,Y)$ but $\mathcal{B}\models \neg Q(X,Y)$. Hence $Q(X,Y)$ is not upwards Henkin independent relative to any theory $T$ which is true on all finite standard models.

Choose $Z$ in $S_1[A]$. Then $\mathcal{B}\models Q(\emptyset,Z)$ but $\mathcal{A}\models \neg Q(\emptyset,Z)$. Hence $Q(X,Y)$ is not downwards Henkin independent relative to any theory $T$ which is true on all finite standard models.

But $Q(X,Y)$  is Henkin conservative relative to any theory extending $\mathsf{ACA}_0^{-}$ since both the existential quantifier and universal quantifier are Henkin conservative.
\end{ex}

\begin{ex}\label{ex:disjunction:2} (Disjunctive Example 2).

Let $Q(X,Y)$ be $(\varphi_{fin} \rightarrow X\cap Y\neq \emptyset)\wedge (\neg \varphi_{fin} \rightarrow X\subseteq Y)$, where $\varphi_{fin}$ says that the domain is Dedekind finite. That is, $Q(X,Y)$ is the type $(1,1)$ quantifier which is the existential quantifier if the domain is Dedekind finite, and which is the universal quantifier if the domain is Dedekind infinite (cf. Definition~\ref{defn:cardinality} for the definition of Dedekind finitude and infinitude).

Consider the Henkin model $\mathcal{A}$ such that $A$ is the the complex numbers $\mathbb{C}$ with constant symbols for the field structure and with $S_n[A]$ being the first-order definable subsets of the $n$-th Cartesian product $\mathbb{C}^n$ of the complex field. Consider the standard model $\mathcal{B}$ such that $B$ is the the complex numbers $\mathbb{C}$ with constant symbols for the field structure. Then $\mathcal{A}$ is a model of $\mathsf{\Delta^1_1\mbox{-}CA}_0^{-} + \varphi_{fin}$,\footnote{It is a model of $\mathsf{\Delta^1_1\mbox{-}CA}_0^{-}$ since the complex field has theory which is uncountably categorical and hence saturated (cf. \cite[Theorem 5.2.11]{Tent2012-jw}, \cite[Theorem VI.1.12]{Hungerford1974-wx}) and hence recursively saturated, and the Barwise-Schlipf method shows that the second-order models associated to recursively saturated structures satisfy $\mathsf{\Delta^1_1\mbox{-}CA}_0^{-}$ (\cite[Theorem 2.2]{Barwise1975-xg}, \cite[Lemma IX.4.3]{Simpson2009-ra}). It is a model of $\varphi_{fin}$ by the famous Ax–Grothendieck Theorem (cf. \cite[46]{Tent2012-jw}). \label{fnax}} while $\mathcal{B}$ is a model of $\mathsf{\Delta^1_1\mbox{-}CA}_0^{-}+\neg \varphi_{fin}$.

At a certain level of idealization, $\mathcal{A}$ is the setting of algebraic geometry, and $\mathcal{B}$ is the setting of complex analysis. Hence, this is very much a real-life example of two adjacent disciplines which differ from one another not in terms of their first-order objects but in terms of the second-order entities which they countenance.

Choose $X,Y$ in $S_1[A]$ such that $X\cap Y\neq \emptyset$ but not $X\subseteq Y$. Then $\mathcal{A}\models Q(X,Y)$ but $\mathcal{B}\models \neg Q(X,Y)$. Hence $Q(X,Y)$ is not upwards Henkin independent relative to $\mathsf{\Delta^1_1\mbox{-}CA}_0^{-}$. 

Choose $Z$ in $S_1[A]$. Then $\mathcal{B}\models Q(\emptyset,Z)$ but $\mathcal{A}\models \neg Q(\emptyset,Z)$. Hence $Q(X,Y)$ is not downwards Henkin domain independent relative to $\mathsf{\Delta^1_1\mbox{-}CA}_0^{-}$. 

Further, $Q(X,Y)$  is Henkin conservative relative to any theory extending $\mathsf{ACA}_0^{-}$ since both the existential quantifier and universal quantifier are Henkin conservative.
\end{ex}

\begin{ex}\label{ex:disjunction:3} (Disjunctive Example 3).

Let $\varphi_{PA^2}$ say that there is a model of second-order Peano arithmetic:
\begin{align*}
 \exists \; z \; & \exists \; N \; \exists \; S \; \\
\&\; &\forall \; x \; (Nx \rightarrow \exists \; ! \; y \; (Ny \wedge Sxy)) \\
\&\; & \forall \; x \; \forall \; y \; \forall \; w \; \big((Nx\wedge Ny\wedge Nw \wedge Sxw\wedge Syw)\rightarrow x=y\big) \\
\&\; & \forall \; x \; (Nx\rightarrow \neg Sxz) \\
\&\; & \forall \; X \; \bigg(\big(X\subseteq N \; \& \; Xz \; \& \; \big(\forall \; x \; \forall \; y \; ((Nx \; \& \; Ny \; \& \; Sxy \; \& \; Xx)\rightarrow Xy)\big)\big)\rightarrow N\subseteq X\bigg)
\end{align*}
Let $Q(X,Y)$ be $(\neg \varphi_{PA^2} \rightarrow X\cap Y\neq \emptyset)\wedge ( \varphi_{PA^2} \rightarrow X\subseteq Y)$. That is, $Q(X,Y)$ is the type $(1,1)$ quantifier which is the existential quantifier if the domain does not contain a model of second-order Peano arithmetic, and which is the universal quantifier if the domain does contain a model of second-order Peano arithmetic.

Consider the Henkin model $\mathcal{A}$ such that $A$ is uncountable and $S_n[A]$ is the collection of first-order definable subsets of $A$ as a first-order structure in empty signature. Consider the standard model $\mathcal{B}$ such that $B$ is $A$. Then $\mathcal{A}$ is a model of $\mathsf{\Delta^1_1\mbox{-}CA}_0^{-} + \neg \varphi_{PA^2}$,\footnote{It is a model of $\mathsf{\Delta^1_1\mbox{-}CA}_0^{-}$ since $A$ as a first-order structure in the empty theory is uncountably categorical and hence saturated (cf. \cite[Theorem 5.2.11]{Tent2012-jw}) and hence recursively saturated, and the Barwise-Schlipf method shows that the second-order models associated to recursively saturated structures satisfy $\mathsf{\Delta^1_1\mbox{-}CA}_0^{-}$ (\cite[Theorem 2.2]{Barwise1975-xg}, \cite[Lemma IX.4.3]{Simpson2009-ra}). It is a model of $\neg \varphi_{PA^2}$ since the sets in $S_1[A]$ are finite or cofinite. But e.g. the evens in a model of Peano arithmetic are infinite and coinfinite.} while $\mathcal{B}$ is a model of $\mathsf{\Delta^1_1\mbox{-}CA}_0^{-}+\varphi_{PA^2}$.

The model $\mathcal{A}$ is the setting where the only properties are finite and cofinite collections of objects, and $\mathcal{B}$ is the setting where a richer array of properties exist.

As in the previous example, $Q(X,Y)$ is neither upwards nor downwards Henkin independent relative to $\mathsf{\Delta^1_1\mbox{-}CA}_0^{-}$, but it is Henkin conservative relative to any theory extending $\mathsf{ACA}_0^{-}$.
\end{ex}

\section{Definability, bounded definability, and absoluteness}\label{sec:defnandboundeddefn}

In this short section, we say what it means for a generalized quantifier itself to be definable and bounded definable, and we discuss the absoluteness of certain definitions.

\begin{defn}\label{defn:definable} (Definability relative to $T$).

Suppose that $T$ is a theory and $Q(\overline{R})$ is a generalized quantifier. Then $Q$ is \emph{$\Sigma^1_1$-definable relative to $T$} if there is a $\Sigma^1_1$-formula $\Phi(\overline{R})$ with exactly the free variables displayed such that $T$ proves
\begin{equation}
\forall \; \overline{R} \; (Q(\overline{R})\leftrightarrow \Phi(\overline{R}))
\end{equation}
Further,  \emph{$\Pi^1_1$-definable relative to $T$} is defined exactly the same, but with the $\Sigma^1_1$-formula replaced by a $\Pi^1_1$-formula. And \emph{first-order definable relative to $T$} is defined exactly the same but with $\Sigma^1_1$-formula replaced by a first-order formula. Finally $Q$ is \emph{$\Delta^1_1$-definable relative to $T$} if it is both $\Sigma^1_1$- and $\Pi^1_1$-definable relative to~$T$.  
If $T$ is clear from context, then we omit ``relative to $T$'' for brevity.
\end{defn}

We need a similar notion of bounded quantifiers and bounded definability:
\begin{defn}\label{defn:binding} (Bounded definability notions).
\begin{enumerate}[leftmargin=*]
    \item\label{defn:binding1} If $X$ is a unary second-order term, then a formula $\varphi$ has \emph{all quantifiers bounded to $X$} if every first-order quantifier in $\varphi$ has the form $\exists \; x \; Xx\wedge \cdots$ or the form $\forall \; x \;(Xx\rightarrow \cdots)$, and if every second-order quantifier in $\varphi$ has the form $\exists \; R \; R\subseteq X^n \wedge \cdots$ or the form $\forall \; R \; (R\subseteq X^n \rightarrow \cdots$, where $R$ is $n$-ary.
    \item\label{defn:binding2} A formula is \emph{bound} if there is an unary second-order term $X$ such that all quantifiers in the formula are bound to $X$ and the free variables of $X$ do not occur bound in the formula.
    \item\label{defn:binding3} If $\varphi$ is a formula and $X$ is a unary term whose free variables do not occur bound in $\varphi$, then we define $\varphi^X$ to be the result of first binding the first-order quantifiers in $\varphi$ to $X$ and subsequently binding the second-order quantifiers in $\varphi$ to $X$.  
\end{enumerate}
\end{defn}
Obviously, $\varphi^X$ as in (\ref{defn:binding3}) is bound in the sense of (\ref{defn:binding2}); and any formula bound in the sense of (\ref{defn:binding2}) is of the form $\varphi^X$.

Here are three examples:
\begin{ex}
If $R$ is a binary second-order variable and $X$ is unary second-order variable, then $(\exists \; R \; \forall \; x \; Rxx)^X$ is $\exists \; R \; R\subseteq X^2 \wedge \forall \; x \; (Xx\rightarrow Rxx)$. This last formula is bound, with all quantifiers bound to $X$.
\end{ex}
\begin{ex}
If $R$ is a binary second-order variable and $R^{\prime}$ is a binary second-order variable and $X$ is a unary second-order variable, then $\mathscr{U}R$ is a unary second-order term and $(\exists \; X \; \forall \; x \; (Xx\rightarrow R^{\prime}xx))^{\mathscr{U}R}$ is  $\exists \; X\subseteq \mathscr{U}R \; \forall \; x \; ((\mathscr{U}R)x \rightarrow (Xx\rightarrow R^{\prime}xx))$. This last formula is bound, with all quantifiers bound to the field~$\mathscr{U}R$ of~$R$. 
\end{ex}
\begin{ex}
The formulas $\left|X\right|\leq \left|Y\right|$ and $\left|X\right|<\left|Y\right|$ from Definition~\ref{defn:cardinality} are bound to $X\cup Y$. Likewise, the formula for $X$ being Dedekind infinite and the formula for $X$ being Dedekind finite are bound to $X$.
\end{ex}

However, we note the following, which is the familiar problem of predicate logic on empty domains:\footnote{It might be worth exploring ways to modify the logic to handle empty domains, along the lines of \cite{Hailperin1953-xk}, \cite{Fitting1971-wk}. However, given the other tools used in this paper, that would require a deeper understanding of the metatheory of such logics than I currently have.}$^{\mbox{,\;}}$\footnote{If one is just interested in model theory, one can just accept empty domains and the consequences spelled out in the remark and let the chips fall where they may-- this is how Hodges proceeds \cite[p. 2, p. 46, Exercise 7]{Hodges1993-ux}. However, without corresponding changes to the deductive system (along the lines of the previous footnote), this will cause problems when one starts doing metatheory related to the Soundness and Completeness Theorems, which I am very much using throughout the paper.}

\begin{rmk}\label{rmk:binding:poorly} (Binding interacts poorly with the empty set).

Suppose that $\mathcal{A}$ is a model of $\mathsf{ACA}_0^-$ and $\overline{x}$ is from $A$ and $\overline{R}$ is from $S_{\overline{n}}[A]$ and~$X$ is term with free variables among $\overline{x},\overline{R}$, and suppose that $X$ is empty, that is, $X=\emptyset$. 

If $\varphi(\overline{x},\overline{R})$ begins with a first-order existential quantifier, then $\mathcal{A}\models \neg \varphi^X(\overline{x},\overline{R})$.

If $\varphi(\overline{x},\overline{R})$ begins with a first-order universal quantifier, then $\mathcal{A}\models \varphi^X(\overline{x},\overline{R})$.
\end{rmk}

Despite this poor behavior in this limiting case, binding notions are important for their absoluteness properties. We encountered such properties already in Proposition~\ref{prop:absolutenboolean}. It is a recurring theme in mathematical logic: for instance, bounded formulas are absolute between transitive models of set theory, and recursive formulas are absolute between models of first-order Peano arithmetic.\footnote{\cite[Lemma 12.9]{Jech2007-bp}, \cite[Lemma IV.1.20]{Hajek1998-rs}.} The following is the version of absoluteness which is useful in the setting of second-order logic:

\begin{restatable}{thm}{thmstable}\label{thm:stable} (Grades of absoluteness and bounded definability).

Suppose that $\mathcal{A},\mathcal{B}$ are two Henkin models of $\mathsf{ACA}_0^{-}$ with $\mathcal{A}$ a substructure of~$\mathcal{B}$. Suppose that $\varphi(\overline{x},\overline{R})$ is a formula with free variables among $\overline{x},\overline{R}$, and suppose $X$ is a unary term. Suppose that $\overline{x},\overline{R}$ are from $\mathcal{A}$ and the free variables of $X$ are from~$\mathcal{A}$ and are not bound in $\varphi(\overline{x},\overline{R})$.

If $\varphi(\overline{x},\overline{R})$ is $\Sigma^1_1$, then
\begin{equation}\label{eqn:stable:conditional}
\mathcal{A}\models \varphi^X(\overline{x},\overline{R}) \Longrightarrow \mathcal{B}\models \varphi^X(\overline{x},\overline{R})
\end{equation}
If $\varphi(\overline{x},\overline{R})$ is $\Pi^1_1$ then the arrow is reversed. If $\varphi(\overline{x},\overline{R})$ is equivalent to a single $\Sigma^1_1$-formula on both $\mathcal{A},\mathcal{B}$ and a single $\Pi^1_1$-formula on both $\mathcal{A},\mathcal{B}$, then the biconditional holds. In particular, if $\varphi(\overline{x},\overline{R})$ is first-order, then the biconditional holds.
\end{restatable}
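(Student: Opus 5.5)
The plan is to first prove the biconditional for first-order $\varphi$ and then derive the $\Sigma^1_1$ and $\Pi^1_1$ clauses from it.

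\textbf{Step 1: bounded first-order formulas are absolute.} Let $\varphi$ be first-order, so every quantifier in $\varphi^X$ is a first-order quantifier bounded to $X$. Because $X$ is a term whose free variables come from $\mathcal{A}$, Proposition~\ref{prop:absolutenboolean} (both models satisfy $\mathsf{ACA}_0^{-}$) shows that $X$ denotes the same subset of $A$ in $\mathcal{A}$, in $\mathcal{B}$, and in the metatheory. The same holds for every other term of the minimal set-theoretic apparatus occurring in $\varphi$. I would then prove, by induction on the complexity of $\psi$ and for all subformulas $\psi$ of $\varphi$ with parameters $\overline{a}$ from $A$, that $\mathcal{A}\models\psi^X(\overline{a})$ iff $\mathcal{B}\models\psi^X(\overline{a})$.
\begin{enumerate}[leftmargin=*]
\item Atomic case: membership is interpreted absolutely. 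Equality between second-order objects reduces to extensionality, and terms agree by Proposition~\ref{prop:absolutenboolean}. Relation symbols agree because $C^{\mathcal{B}}\cap A^n=C^{\mathcal{A}}$.
\item Connective cases: these are trivial.
\item Quantifier case: a bounded quantifier $\exists x\,(Xx\wedge\cdots)$ ranges over the same set $X\subseteq A$ in both models, so the induction hypothesis applies.
\end{enumerate}
This yields the final sentence of the theorem.

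\textbf{Step 2: the $\Sigma^1_1$ and $\Pi^1_1$ clauses.} Write $\varphi=\exists R_1\cdots R_\ell\,\theta$ with $\theta$ first-order. Then $\varphi^X$ has the form $\exists R_1\subseteq X^{n_1}\cdots\exists R_\ell\subseteq X^{n_\ell}\,\theta^X$. Suppose $\mathcal{A}\models\varphi^X$, and pick witnesses $R_i\in S_{n_i}[A]\subseteq S_{n_i}[B]$. By Proposition~\ref{prop:absolutenboolean}, $R_i\subseteq X^{n_i}$ holds in $\mathcal{B}$. By Step~1 applied with the $R_i$ as parameters, $\mathcal{B}\models\theta^X(\overline{R})$, so $\mathcal{B}\models\varphi^X$.

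For $\Pi^1_1$ formulas, $\neg\varphi$ is equivalent to a $\Sigma^1_1$ formula, and binding commutes with negation up to logical equivalence. Applying the $\Sigma^1_1$ case to $\neg\varphi$ and taking the contrapositive gives the reversed arrow.

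\textbf{Step 3: the $\Delta^1_1$ clause.} I would read the hypothesis as the existence of $\sigma\in\Sigma^1_1$ and $\pi\in\Pi^1_1$ such that $\varphi^X\leftrightarrow\sigma^X$ and $\varphi^X\leftrightarrow\pi^X$ hold in both models, with the same $X$. Then:
\begin{itemize}[leftmargin=*]
\item $\mathcal{A}\models\varphi^X$ gives $\mathcal{A}\models\sigma^X$, hence $\mathcal{B}\models\sigma^X$ by Step~2, hence $\mathcal{B}\models\varphi^X$.
\item $\mathcal{B}\models\varphi^X$ gives $\mathcal{B}\models\pi^X$, hence $\mathcal{A}\models\pi^X$ by Step~2, hence $\mathcal{A}\models\varphi^X$.
\end{itemize}

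\textbf{Main obstacle.} The hard part is bookkeeping, not depth. Two things need care:
\begin{itemize}[leftmargin=*]
\item Terms such as $\mathscr{U}\overline{R}$, $R[\overline{x}]$, and $X$ itself must be evaluated identically in both models. This is exactly the content of Proposition~\ref{prop:absolutenboolean}, and it is why both models must satisfy $\mathsf{ACA}_0^{-}$.
\item The equivalences in Step~3 must be read as equivalences of the bound versions.
\end{itemize}
The empty-$X$ case (Remark~\ref{rmk:binding:poorly}) causes no trouble, since the induction in Step~1 handles it uniformly.
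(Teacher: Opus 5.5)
Your proposal is correct and follows essentially the same route as the paper. It uses induction on complexity for bound first-order formulas via Proposition~\ref{prop:absolutenboolean}, upward transfer of $\Sigma^1_1$ witnesses using $S_n[A]\subseteq S_n[B]$, and contraposition for $\Pi^1_1$. The differences are cosmetic: the paper first reduces to $X$ being a variable in the pure membership signature rather than carrying terms through the induction, and it leaves implicit the $\Delta^1_1$ chaining that you spell out in Step~3.
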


We prove this in \S\ref{sec:stable}, and it builds off of Proposition~\ref{prop:absolutenboolean}. Note that Example~\ref{ex:disjunction:1} shows that one cannot remove the binding to $X$ and maintain the conditional in (\ref{eqn:stable:conditional}) for first-order formulas $\varphi(\overline{x},\overline{R})$.

In the case of internal substructures (cf. Definition~\ref{defn:internal}) one can improve Theorem~\ref{thm:stable} as follows:
\begin{restatable}{thm}{thminternal}\label{thm:internal} (More extensive absoluteness on internal substructures).

If $\mathcal{A}$ is an internal substructure of $\mathcal{B}$, then one has $\mathcal{A}\models \varphi(\overline{x},\overline{R})$ iff $\mathcal{B}\models \varphi^A(\overline{x},\overline{R})$, for all formulas $\varphi$ and $\overline{x}$ from $A$ and $\overline{R}$ from $S_{\,\overline{n}\,}[A]$.
\end{restatable}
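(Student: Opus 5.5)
The plan is to prove the biconditional by induction on the complexity of $\varphi$, simultaneously for all assignments $\overline{x}$ from $A$ and $\overline{R}$ from $S_{\,\overline{n}\,}[A]$. Here $\varphi^A$ means $\varphi$ with all quantifiers bound to the element $A$ of $S_1[B]$, treated as a parameter or constant; since $A$ is a closed unary term, no variable clash arises. Throughout I use the concrete description of $\mathcal{A}$ from Definition~\ref{defn:internal}: first-order domain $A$, $S_n[A]=S_n[B]\cap P(A^n)$, the same individual constants, and relation symbols $C^{\mathcal{A}}=C^{\mathcal{B}}\cap A^n$.

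\emph{Atomic step.} Atomics are either identities between first-order terms, membership $R\overline{t}$, or extra-logical atomics. Identity and membership are interpreted absolutely in Henkin models, so their truth in $\mathcal{A}$ and $\mathcal{B}$ agrees when the arguments come from $A$ and $S_n[A]$. Individual constants agree, and for an $n$-ary relation symbol $C$ and $\overline{a}\in A^n$ we have $\overline{a}\in C^{\mathcal{A}}$ iff $\overline{a}\in C^{\mathcal{B}}$, because $C^{\mathcal{A}}=C^{\mathcal{B}}\cap A^n$. If atomics involving the definitional apparatus (such as $\cap$ or $\pi_i$) are included, I will either eliminate them first in favour of their first-order definitions, or invoke Proposition~\ref{prop:absolutenboolean} together with the observation that these operations applied to relations on $A$ land back in $P(A^n)$. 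Since $\varphi^A$ leaves atomics unchanged, this settles the base case. The Boolean connectives are immediate from the induction hypothesis.

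\emph{First-order quantifier step.} Take $\varphi=\exists\, y\,\psi$. Then $\mathcal{B}\models(\exists\, y\,\psi)^A$ iff there is $b\in B$ with $b\in A$ and $\mathcal{B}\models\psi^A(b,\ldots)$. By the induction hypothesis applied at $b\in A$, this holds iff there is $b\in A$ with $\mathcal{A}\models\psi(b,\ldots)$, i.e. iff $\mathcal{A}\models\varphi$. Universal quantifiers are handled dually, or reduced via negation.

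\emph{Second-order quantifier step.} This is the step I expect to be the main obstacle, since it is the only place where the specific structure of internal substructures is used. Take $\varphi=\exists\, S\,\psi$ with $S$ of arity $m$. Then $\mathcal{B}\models\varphi^A$ iff there is $S\in S_m[B]$ with $\mathcal{B}\models S\subseteq A^m$ and $\mathcal{B}\models\psi^A(S,\ldots)$. Two facts are needed. First, the object-language clause $S\subseteq A^m$ must coincide with the metatheoretic inclusion $S\subseteq A^m$. This follows from Proposition~\ref{prop:absolutenboolean} applied in $\mathcal{B}$, which satisfies enough comprehension for the apparatus; alternatively, one unwinds $S\subseteq A^m$ as $\forall\,\overline{z}\,(S\overline{z}\rightarrow\bigwedge_i A z_i)$ and checks it directly. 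Second, the witnesses available in $\mathcal{B}$ are then exactly the elements of $S_m[B]\cap P(A^m)=S_m[A]$, which is precisely the second-order range of $\mathcal{A}$. With both facts in hand, the induction hypothesis applies to $S\in S_m[A]$ and gives the equivalence with $\mathcal{A}\models\exists\,S\,\psi$. The universal case is dual.

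The argument never uses comprehension in $\mathcal{A}$, only the definition of the internal substructure. So beyond the absoluteness of $\subseteq$ (or its direct unwinding), the result holds for arbitrary Henkin models $\mathcal{B}$.
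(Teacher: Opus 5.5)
Your proposal is correct and follows essentially the paper's proof. Both argue by induction on complexity, and in both the decisive second-order step uses $S_m[A]=S_m[B]\cap P(A^m)$ together with the absoluteness of $S\subseteq A^m$ (Proposition~\ref{prop:absolutenboolean}). The only difference is organizational: the paper imports the first-order case from Theorem~\ref{thm:stable}, after noting that $\varphi$ and $\varphi^A$ agree on $\mathcal{A}$, and then adds the second-order quantifier step. Your single uniform induction instead also explicitly covers second-order quantifiers nested under connectives and first-order quantifiers.
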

\noindent We prove this in \S\ref{sec:stable}. It has the following immediate corollary:
\begin{cor}\label{cor:stable:standard:internal}  (More extensive absoluteness on standard models).

If $\mathcal{A},\mathcal{B}$ standard and $\mathcal{A}$ is a substructure of $\mathcal{B}$, then one has $\mathcal{A}\models \varphi(\overline{x},\overline{R})$ iff $\mathcal{B}\models \varphi^A(\overline{x},\overline{R})$, for all formulas $\varphi$ and $\overline{x}$ from $A$ and $\overline{R}$ from $S_{\,\overline{n}\,}[A]$.
\end{cor}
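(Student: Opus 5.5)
The plan is to reduce Corollary~\ref{cor:stable:standard:internal} directly to Theorem~\ref{thm:internal}. The key observation is that when $\mathcal{A}$ and $\mathcal{B}$ are both standard and $\mathcal{A}$ is a substructure of $\mathcal{B}$, then $\mathcal{A}$ is precisely the internal substructure of $\mathcal{B}$ associated to the set $A$.

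\textbf{Step 1: $\mathcal{A}$ is an admissible internal substructure of $\mathcal{B}$.}
\begin{itemize}[leftmargin=*]
\item Since $\mathcal{A}$ is a substructure of $\mathcal{B}$ (Definition~\ref{defn:substructure}), we have $A\subseteq B$.
\item Since $\mathcal{B}$ is standard, $S_1[B]=P(B)$, so $A\in S_1[B]$.
\item $A$ is non-empty, being the domain of a Henkin model.
\item For each individual constant $c$, we have $c^{\mathcal{A}}=c^{\mathcal{B}}$, so $c^{\mathcal{B}}\in A$.
\end{itemize}
Hence the hypotheses of Definition~\ref{defn:internal} are met, and the internal substructure $\mathcal{A}'$ of $\mathcal{B}$ associated to $A$ exists.

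\textbf{Step 2: $\mathcal{A}'=\mathcal{A}$.} We check the three components of the structure.
\begin{itemize}[leftmargin=*]
\item The first-order domains agree: both are $A$.
\item For every $n\geq 1$,
\[
S_n[A']=S_n[B]\cap P(A^n)=P(B^n)\cap P(A^n)=P(A^n)=S_n[A],
\]
where the last equality uses that $\mathcal{A}$ is standard.
\item For constants, $c^{\mathcal{A}'}=c^{\mathcal{B}}=c^{\mathcal{A}}$. For $n$-ary relation symbols, $C^{\mathcal{A}'}=C^{\mathcal{B}}\cap A^n=C^{\mathcal{A}}$, by the substructure clause of Definition~\ref{defn:substructure}.
\end{itemize}
So $\mathcal{A}'$ and $\mathcal{A}$ are literally the same Henkin model.

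\textbf{Step 3: apply Theorem~\ref{thm:internal}.} Take $\mathcal{A}'=\mathcal{A}$ in Theorem~\ref{thm:internal}. For all formulas $\varphi$, all $\overline{x}$ from $A$, and all $\overline{R}$ from $S_{\,\overline{n}\,}[A]$, it gives
\[
\mathcal{A}\models\varphi(\overline{x},\overline{R}) \iff \mathcal{B}\models\varphi^A(\overline{x},\overline{R}),
\]
which is the claim.

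There is essentially no obstacle here. The only step needing any care is Step 1, the membership $A\in S_1[B]$, and this is exactly where the standardness of $\mathcal{B}$ is used. The standardness of $\mathcal{A}$ is used only in Step 2, to identify $S_n[A]$ with $P(A^n)$.
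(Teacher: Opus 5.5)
Your proposal is correct and matches the paper, which states this as an immediate corollary of Theorem~\ref{thm:internal} without further proof. The implicit step is exactly your observation that a standard substructure $\mathcal{A}$ of a standard $\mathcal{B}$ is literally the internal substructure of $\mathcal{B}$ associated to $A$, with $A\in S_1[B]$ coming from standardness of $\mathcal{B}$ and $S_n[A]=P(A^n)$ coming from standardness of $\mathcal{A}$. The only tacit point you could add in a clause is that standard models satisfy full comprehension, so the $\aca$ background used inside the proof of Theorem~\ref{thm:internal} (via Theorem~\ref{thm:stable}) is automatically available.
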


Having defined bounded quantifiers, we can now define bounded definability:\footnote{It is possible to define a less restrictive version of this where we bind to $\mathscr{U} \overline{R}\cup \mathscr{U}\overline{R^{\prime}}$ rather than to $\mathscr{U} \overline{R}$. However, the definition given anticipates the application to Henkin conservation in Theorem~\ref{thm:fefermanesqfour}.}
\begin{defn}\label{defn:bounded} (Bounded definability of generalized quantifiers).

Suppose that $T$ is a theory and $Q(\overline{R},\overline{R}^{\prime})$ is a generalized quantifier. Then $Q$ is \emph{bounded $\Sigma^1_1$-definable relative to $T$} if there is a $\Sigma^1_1$-formula $\Phi(\overline{R}, \overline{R}^{\prime})$ with all free variables displayed such that $T$ proves
\begin{equation}\label{eqn:bndsigma11}
\forall \; \overline{R} \;\forall \; \overline{R^{\prime}} \; (Q(\overline{R}, \overline{R^{\prime}})\leftrightarrow \Phi^{\mathscr{U}\overline{R}}(\overline{R},\overline{R^{\prime}}))
\end{equation}
Further,  \emph{bounded $\Pi^1_1$-definable relative to $T$} is defined exactly the same, but with the $\Sigma^1_1$-formula replaced by a $\Pi^1_1$-formula. And \emph{bounded first-order definable relative to $T$} is defined exactly the same but with $\Sigma^1_1$-formula replaced by a first-order formula. Moreover, $Q$ is \emph{bounded $\Delta^1_1$-definable relative to $T$} if it is both bounded $\Sigma^1_1$-definable relative to $T$ and bounded-$\Pi^1_1$-definable relative to~$T$. Finally, $Q$ is \emph{bounded definable relative to $T$} if $T$ proves (\ref{eqn:bndsigma11}) for some formula $\Phi(\overline{R},\overline{R}^{\prime})$.
If $T$ is clear from context, then we omit ``relative to $T$'' for brevity.
\end{defn}

One then has the following immediate Corollary:
\begin{cor}\label{cor:stable} (Grades of bounded definability suffices for grades of domain invariance).

Suppose that $T$ is a theory and $Q(\overline{R},\overline{R}^{\prime})$ is a generalized quantifier.
\begin{enumerate}[leftmargin=*]
    \item\label{cor:stable:1} If $Q(\overline{R},\overline{R}^{\prime})$ is bounded $\Sigma^1_1$-definable relative to $T$, then $Q(\overline{R},\overline{R}^{\prime})$ is upward Henkin invariant relative to $T$.
    \item\label{cor:stable:2} If $Q(\overline{R},\overline{R}^{\prime})$ is bounded $\Pi^1_1$-definable relative to $T$, then $Q(\overline{R},\overline{R}^{\prime})$ is downard Henkin invariant relative to $T$.
    \item\label{cor:stable:3} If $Q(\overline{R},\overline{R}^{\prime})$ is bounded $\Delta^1_1$-definable relative to $T$, then $Q(\overline{R},\overline{R}^{\prime})$ is Henkin domain invariant relative to $T$.
\end{enumerate}
\end{cor}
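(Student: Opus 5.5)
The plan is to combine the definitional equivalence supplied by $T$ with the absoluteness result, Theorem~\ref{thm:stable}. Fix Henkin models $\mathcal{A},\mathcal{B}$ of $T$ with $\mathcal{A}$ a substructure of $\mathcal{B}$, and fix $\overline{R},\overline{R}^{\prime}$ from $S_{\,\overline{n}\,}[A]$.

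For (\ref{cor:stable:1}), take the $\Sigma^1_1$-formula $\Phi(\overline{R},\overline{R}^{\prime})$ witnessing bounded $\Sigma^1_1$-definability. Since $\mathcal{A},\mathcal{B}\models T$, soundness applies to the equivalence (\ref{eqn:bndsigma11}), so both models satisfy $Q(\overline{R},\overline{R}^{\prime})\leftrightarrow \Phi^{\mathscr{U}\overline{R}}(\overline{R},\overline{R}^{\prime})$ at these parameters. Suppose $\mathcal{A}\models Q(\overline{R},\overline{R}^{\prime})$. Then $\mathcal{A}\models\Phi^{\mathscr{U}\overline{R}}(\overline{R},\overline{R}^{\prime})$. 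Now apply Theorem~\ref{thm:stable} with $X:=\mathscr{U}\overline{R}$. This is a unary term whose free variables are among $\overline{R}$, which are from $\mathcal{A}$. We may assume, by renaming bound variables, that these variables are not bound in $\Phi$; this matches the requirement already built into Definition~\ref{defn:binding}(\ref{defn:binding3}). The theorem gives $\mathcal{B}\models\Phi^{\mathscr{U}\overline{R}}(\overline{R},\overline{R}^{\prime})$, and hence $\mathcal{B}\models Q(\overline{R},\overline{R}^{\prime})$.

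Item (\ref{cor:stable:2}) is the mirror image. Use the $\Pi^1_1$ defining formula together with the reversed arrow in Theorem~\ref{thm:stable}: from $\mathcal{B}\models Q$ we get $\mathcal{B}\models\Phi^{\mathscr{U}\overline{R}}$, then $\mathcal{A}\models\Phi^{\mathscr{U}\overline{R}}$, and so $\mathcal{A}\models Q$. Item (\ref{cor:stable:3}) follows by conjoining (\ref{cor:stable:1}) and (\ref{cor:stable:2}), since bounded $\Delta^1_1$-definability provides both a $\Sigma^1_1$ and a $\Pi^1_1$ bounded definition.

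The only point needing care is that Theorem~\ref{thm:stable} is stated for models of $\aca$. So I would note, as the paper tacitly does throughout, that $T$ is assumed to extend $\aca$; the theories under consideration all do. This also guarantees that the term $\mathscr{U}\overline{R}$ denotes in both models. By Proposition~\ref{prop:absolutenboolean} it denotes the same set in $\mathcal{A}$ and in $\mathcal{B}$, which is what makes the binding term shared between the two models. Beyond this, the proof is a routine chase through the definitions.
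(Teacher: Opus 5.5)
Your proposal is correct and is exactly the paper's argument, which simply cites Definition~\ref{defn:bounded}, Theorem~\ref{thm:stable} (applied with $X=\mathscr{U}\overline{R}$), and Definition~\ref{defn:HDI}. Your caveat about $\aca$ is harmless: in \S\ref{sec:henkinish} the paper requires pre-Henkin models, and hence its Henkin models (Definition~\ref{defn:henkin}), to satisfy extensionality and first-order comprehension, so every Henkin model of $T$ is already a model of $\aca$.
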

\begin{proof}
This follows from Definition~\ref{defn:bounded}, Theorem~\ref{thm:stable}, and Definition~\ref{defn:HDI}.
\end{proof}

If one restricts to standard models, one gets something more, namely that all bounded definable generalized quantifiers, regardless of the complexity of the definition, are domain invariant across standard models:
\begin{cor}\label{cor:stable:standard} (Bounded definability and domain invariance on standard models).

Suppose that $T$ is a theory which is true on all standard models. Suppose that $Q(\overline{R},\overline{R}^{\prime})$ is a generalized quantifier which is bounded definable relative to $T$. Suppose that $\mathcal{A},\mathcal{B}$ are two standard models with $\mathcal{A}$ a substructure of $\mathcal{B}$. Suppose that $\overline{R},\overline{R}^{\prime}$ are in $S_{\overline{n}}[A]$. Then one has $\mathcal{A}\models Q(\overline{R},\overline{R}^{\prime})$ iff $\mathcal{B}\models Q(\overline{R},\overline{R}^{\prime})$.
\end{cor}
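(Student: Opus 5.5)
The plan is to reduce everything to Corollary~\ref{cor:stable:standard:internal}, using the bounded definition supplied by $T$ and the fact that $\mathscr{U}\overline{R}$ is computed the same way in both models. Fix standard $\mathcal{A}\subseteq\mathcal{B}$ and $\overline{R},\overline{R}^{\prime}$ in $S_{\overline{n}}[A]$. Since $T$ is true on all standard models, both $\mathcal{A}$ and $\mathcal{B}$ are models of $T$. By bounded definability there is a formula $\Phi$ with
\[
\mathcal{A}\models Q(\overline{R},\overline{R}^{\prime})\leftrightarrow \Phi^{\mathscr{U}\overline{R}}(\overline{R},\overline{R}^{\prime}),
\]
and likewise in $\mathcal{B}$. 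It therefore suffices to show that $\mathcal{A}\models\Phi^{\mathscr{U}\overline{R}}(\overline{R},\overline{R}^{\prime})$ iff $\mathcal{B}\models\Phi^{\mathscr{U}\overline{R}}(\overline{R},\overline{R}^{\prime})$.

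First, by Proposition~\ref{prop:absolutenboolean}, the term $\mathscr{U}\overline{R}$ denotes the same set $U\subseteq A$ in both models; call it $U$. Standard models satisfy full comprehension, so that proposition applies.

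If $U=\emptyset$, the argument needs a small separate step. Then $\overline{R}$ is empty, and the bounded formula evaluates identically in both models. Every relativized first-order quantifier ranges over nothing. Every relativized second-order quantifier ranges over subsets of $\emptyset^n$, and in either standard model the only such subset is $\emptyset$. So both evaluations reduce to the same computation, in the spirit of Remark~\ref{rmk:binding:poorly}; one can check this by a direct induction on $\Phi$.

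If $U\neq\emptyset$, let $\mathcal{U}$ be the standard model with domain $U$, with constants interpreted as in $\mathcal{A}$. This $\mathcal{U}$ is an internal substructure of both $\mathcal{A}$ and $\mathcal{B}$, because $S_n[U]=P(U^n)=P(A^n)\cap P(U^n)$, and likewise for $B$. Apply Theorem~\ref{thm:internal} (equivalently Corollary~\ref{cor:stable:standard:internal}) twice:
\[
\mathcal{A}\models\Phi^{U}(\overline{R},\overline{R}^{\prime}) \iff \mathcal{U}\models\Phi(\overline{R},\mathscr{U}\overline{R}\cap\overline{R}^{\prime}) \iff \mathcal{B}\models\Phi^{U}(\overline{R},\overline{R}^{\prime}).
\]

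The main obstacle is that $\overline{R}^{\prime}$ need not lie in $\mathcal{U}$. To handle this, first check that bound formulas only see $\overline{R}^{\prime}$ through its restriction to $U$: atomic occurrences of $R^{\prime}_i$ are applied only to tuples from $U$ under the binding. So $\Phi^U(\overline{R},\overline{R}^{\prime})$ and $\Phi^U(\overline{R},U\cap\overline{R}^{\prime})$ are equivalent in any model of $\mathsf{ACA}_0^-$, by a trivial induction. Once $\overline{R}^{\prime}$ is replaced by $U\cap\overline{R}^{\prime}$, the internal-substructure theorem applies.

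There is one further subtlety. The constants of the signature may fail to lie in $U$, so $\mathcal{U}$ might not be a legitimate internal substructure. To avoid this, I would instead take the internal substructure on $A$ itself and bound to $U$ inside it. Since $\mathcal{A}$ is itself an internal substructure of $\mathcal{B}$ when both are standard, Corollary~\ref{cor:stable:standard:internal} gives $\mathcal{A}\models\Phi^U$ iff $\mathcal{B}\models(\Phi^U)^A$. Finally, $(\Phi^U)^A$ is equivalent to $\Phi^U$ because $U\subseteq A$. Together with the two displayed biconditionals for $Q$, this yields the statement.
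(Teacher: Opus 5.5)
Your final paragraph is exactly the paper's proof: both standard models satisfy $T$; Corollary~\ref{cor:stable:standard:internal} gives $\mathcal{A}\models\Phi^{\mathscr{U}\overline{R}}(\overline{R},\overline{R}^{\prime})$ iff $\mathcal{B}\models\big(\Phi^{\mathscr{U}\overline{R}}(\overline{R},\overline{R}^{\prime})\big)^A$; and the outer binding to $A$ is redundant on $\mathcal{B}$ because $\mathscr{U}\overline{R}\subseteq A$. The earlier case split through the model $\mathcal{U}$ on $\mathscr{U}\overline{R}$ is unnecessary and can be dropped, since the argument on $A$ covers $U=\emptyset$ as well. Its claim that $\overline{R}^{\prime}$ can be replaced by $\mathscr{U}\overline{R}\cap\overline{R}^{\prime}$ by a ``trivial induction'' is also not safe as stated, e.g.\ for an atomic $R^{\prime}_i c$ with $c\notin\mathscr{U}\overline{R}$.
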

\begin{proof}
This follows from Definition~\ref{defn:bounded} and  Corollary~\ref{cor:stable:standard:internal} and the fact that  on $\mathcal{B}$ we have that $\big(\Phi^{\mathscr{U}\overline{R}}(\overline{R},\overline{R}^{\prime})\big)^A$ is equivalent to $\Phi^{\mathscr{U}\overline{R}}(\overline{R},\overline{R}^{\prime})$ since $\overline{R}$ is in $S_{\overline{n}}[A]$.
\end{proof}

The previous corollary indicates that domain invariance across standard models is compatible with arbitrarily complex bounded definable formulas. As we will see in next section, things turn out quite differently with Henkin models.

We close by noting that branching quantifiers like Example~\ref{ex:branchmost} need not be $\Delta^1_1$-definable relative to $\mathsf{\Delta^1_1\mbox{-}CA}_0^-$:\footnote{Equation~(\ref{eqn:rebarwisebranching}) is from \cite[363, 494]{Peters2008aa}.}
\begin{prop}\label{prop:rebarwisebranching} (Branching quantifiers are not necessarily bounded $\Delta^1_1$-definable).

Suppose that $Q_1, \ldots, Q_k$ are $k$-many type $(1,1)$ generalized quantifiers. Consider the associated branching quantifier of type $(1, \ldots, k)$:
\begin{equation}\label{eqn:rebarwisebranching}
\mathsf{Br}_{\overline{Q}}(\overline{A},R) \mbox{ iff } \exists \; B_1\subseteq A_1, \ldots, B_k\subseteq A_k \; \bigwedge_{i=1}^k Q_i(A_i,B_i) \wedge B_1\times \cdots \times B_k\subseteq R
\end{equation}
If $Q_1, \ldots, Q_k$ are bounded $\Sigma^1_1$-definable relative to theory $T$, then so is $\mathsf{Br}_{\overline{Q}}$. But there are examples of $k\geq 1$ and $Q_1, \ldots, Q_k$ which are bounded $\Sigma^1_1$-definable relative to $\mathsf{\Delta^1_1\mbox{-}CA}_0^-$ such that $\mathsf{Br}_{\overline{Q}}$ is not bounded $\Delta^1_1$-definable relative to $\mathsf{\Delta^1_1\mbox{-}CA}_0^-$.
\end{prop}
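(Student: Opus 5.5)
The first claim is a syntactic closure argument. Take the partition $\overline{R}=\overline{A}=(A_1,\ldots,A_k)$ and $\overline{R}^{\prime}=R$, so the bound is $\mathscr{U}\overline{A}=A_1\cup\cdots\cup A_k$. By hypothesis, $T$ proves $Q_i(X,Y)\leftrightarrow \Phi_i^{X}(X,Y)$ for some $\Sigma^1_1$-formula $\Phi_i$, noting that $\mathscr{U}X=X$ for unary $X$.

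Substitute these into (\ref{eqn:rebarwisebranching}). The outer quantifiers $\exists B_i\subseteq A_i$ are second-order existentials bounded to $\mathscr{U}\overline{A}$, since $A_i\subseteq \mathscr{U}\overline{A}$. A quantifier bounded to $A_i$ is equivalent, provably in $\aca$, to the same quantifier bounded to $\mathscr{U}\overline{A}$ with the extra conjunct $A_i x$ (respectively $R\subseteq A_i^n$). So each $\Phi_i^{A_i}(A_i,B_i)$ can be rewritten as $\Psi_i^{\mathscr{U}\overline{A}}$ for a $\Sigma^1_1$-formula $\Psi_i$. The conjunct $B_1\times\cdots\times B_k\subseteq R$ is first-order, and since $B_i\subseteq A_i$ it may also be bound to $\mathscr{U}\overline{A}$.

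Finally, use the usual prenex manipulations, valid in $\aca$, to pull the second-order existentials of the $\Psi_i$ (after renaming bound variables apart) to the front. This yields a single $\Sigma^1_1$-formula $\Phi$ with $\mathsf{Br}_{\overline{Q}}(\overline{A},R)\leftrightarrow\Phi^{\mathscr{U}\overline{A}}(\overline{A},R)$.

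For the counterexample, I take $k=1$ and $Q_1(X,Y)$ to be ``$X\cap Y$ is Dedekind infinite''. This is bounded $\Sigma^1_1$-definable relative to $\mathsf{\Delta^1_1\mbox{-}CA}_0^-$: it asserts the existence of an $f\subseteq X^2$ that is an injective, non-surjective map of $X\cap Y$ to itself. Then $\mathsf{Br}_{Q_1}(A,R)$ says that some $B\subseteq A\cap R$ is Dedekind infinite.

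I use the models $\mathcal{A}\subseteq\mathcal{B}$ of Example~\ref{ex:disjunction:2}. Here $\mathcal{A}$ is the model of first-order definable subsets of $\mathbb{C}$, which satisfies $\mathsf{\Delta^1_1\mbox{-}CA}_0^- + \varphi_{fin}$, and $\mathcal{B}$ is the standard model on $\mathbb{C}$. Evaluate at $A=R=\mathbb{V}$, which lies in $S_1$ of both models.

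\begin{itemize}
\item In $\mathcal{B}$ the domain is Dedekind infinite, so $\mathcal{B}\models\mathsf{Br}_{Q_1}(\mathbb{V},\mathbb{V})$ with $B=\mathbb{V}$.
\item In $\mathcal{A}$ no $B$ is Dedekind infinite. Given a witness $f$ on $B$, the map $f\cup(\mathrm{id}\restriction B^c)$ exists by arithmetical comprehension and would witness that the whole domain is Dedekind infinite, contradicting $\varphi_{fin}$. Hence $\mathcal{A}\models\neg\mathsf{Br}_{Q_1}(\mathbb{V},\mathbb{V})$.
\end{itemize}

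Suppose $\mathsf{Br}_{Q_1}$ were bounded $\Delta^1_1$-definable relative to $\mathsf{\Delta^1_1\mbox{-}CA}_0^-$. It would in particular be bounded $\Pi^1_1$-definable, so by Corollary~\ref{cor:stable}(\ref{cor:stable:2}) it would be downward Henkin invariant. Then $\mathcal{B}\models\mathsf{Br}_{Q_1}(\mathbb{V},\mathbb{V})$ would transfer down to $\mathcal{A}$, which is a contradiction. Hence $\mathsf{Br}_{Q_1}$ is not bounded $\Delta^1_1$-definable relative to $\mathsf{\Delta^1_1\mbox{-}CA}_0^-$.

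The main point needing care is the extension-by-identity step in the second bullet, together with the fact that $\mathcal{A}$ is a substructure of $\mathcal{B}$ in the sense of Definition~\ref{defn:substructure}. Both are routine given Example~\ref{ex:disjunction:2}.
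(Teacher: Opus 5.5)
Your proposal is correct and follows the paper's proof. It uses the same choice $k=1$ with $Q_1(X,Y)$ saying that $X\cap Y$ is Dedekind infinite, the same pair of models from Example~\ref{ex:disjunction:2} evaluated at the full domain $\mathbb{C}$, and the same contradiction with downward transfer via Corollary~\ref{cor:stable}. You also make explicit two steps the paper leaves implicit, and both are sound: the syntactic closure argument for the first claim, and the derivation of Dedekind finiteness of every $B$ in $\mathcal{A}$ from $\varphi_{fin}$ by extending with the identity, where the paper instead cites Ax--Grothendieck directly.
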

\begin{proof}
Consider $k=1$ and $Q(X,Y)$ which says that $X\cap Y$ is Dedekind infinite (cf. Definition~\ref{defn:cardinality}). Then $Q(X,Y)$ is a bounded $\Sigma^1_1$-formula.

Let $\mathcal{A},\mathcal{B}$ be from Example~\ref{ex:disjunction:2}. That is, $\mathcal{A}$ is the non-standard Henkin model where $A$ is the complex numbers $\mathbb{C}$ with constant symbols for the field structure and with $S_n[A]$ being the first-order definable subsets of the $n$-th Cartesian product $\mathbb{C}^n$ of the complex field. Further, $\mathcal{B}$ is the standard model such that $B$ is the complex numbers $\mathbb{C}$ with constant symbols for the field structure.

Let $A,R$ be equal to $\mathbb{C}$. Then $\mathcal{B}\models \mathsf{Br}_Q(A,R)$ since in the metatheory one can identify the natural numbers as a subset of the complex field. However, $\mathcal{A}\models \neg \mathsf{Br}_Q(A,R)$ by the Ax-Grothendieck Theorem (cf. footnote~\ref{fnax}).
\end{proof}

\section{Statement of main results}\label{sec:statement}

The first main result says that Henkin domain independent generalized quantifiers are low level definable:

\begin{restatable}{thm}{thmfefermanesq}\label{thm:fefermanesq} (Grades of Henkin domain independence entail grades of predicative definability).

Suppose that $T$ is a theory extending  $\schoice$ and that $Q$ is a generalized quantifier. Then:

\begin{enumerate}[leftmargin=*]
    \item \label{thm:fefermanesq:1} If $Q$ is Henkin upward independent relative to $T$, then~$Q$ is $\Sigma^1_1$-definable relative to $T$.
    \item \label{thm:fefermanesq:2} If $Q$ is Henkin downward independent relative to $T$, then $Q$ is $\Pi^1_1$-definable relative to $T$.
    \item \label{thm:fefermanesq:3} If $Q$ is Henkin domain independent relative to $T$, then~$Q$ is $\Delta^1_1$-definable relative to $T$.
\end{enumerate}
\end{restatable}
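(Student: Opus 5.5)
The plan is to adapt Feferman's Preservation Theorem in the style of Marker's model-theoretic proof: a property preserved upward under substructure is $\Sigma^1_1$-definable. Part (\ref{thm:fefermanesq:3}) follows from (\ref{thm:fefermanesq:1}) and (\ref{thm:fefermanesq:2}) together. Part (\ref{thm:fefermanesq:2}) follows from (\ref{thm:fefermanesq:1}) applied to $\neg Q$: downward independence of $Q$ is upward independence of $\neg Q$, and the negation of a $\Sigma^1_1$-formula is $\Pi^1_1$. So the real work is part (\ref{thm:fefermanesq:1}).

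For (\ref{thm:fefermanesq:1}), expand the signature by fresh relation constants $\overline{C}$ of type $\overline{n}$. Let $\Gamma$ be the set of $\Sigma^1_1$-sentences $\Phi(\overline{C})$ such that $T\vdash Q(\overline{C})\rightarrow\Phi(\overline{C})$. It suffices to show $T\cup\Gamma\vdash Q(\overline{C})$. Compactness, applied via the Completeness Theorem for Henkin models, then gives finitely many members of $\Gamma$. Their conjunction is again $\Sigma^1_1$, after merging and renaming the existential second-order blocks, and $T$ proves it equivalent to $Q(\overline{C})$. Generalizing on the constants yields the definition.

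Suppose toward a contradiction that $\mathcal{B}\models T\cup\Gamma\cup\{\neg Q(\overline{C})\}$. Consider the set $\Delta$ of $\Pi^1_1$-sentences true in $\mathcal{B}$, including those with parameters from $\mathcal{B}$ added as new constants. Then $T\cup\{Q(\overline{C})\}\cup\Delta$ is consistent. Otherwise some finite part yields $T\vdash Q(\overline{C})\rightarrow\neg\delta$, where $\neg\delta$ is $\Sigma^1_1$ after quantifying out the parameter constants existentially (first- and second-order existentials stay $\Sigma^1_1$). That would put $\neg\delta$ in $\Gamma$, so $\mathcal{B}\models\neg\delta$, which contradicts $\delta\in\Delta$. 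So we get $\mathcal{A}\models T+Q(\overline{C})$ into which $\mathcal{B}$ maps preserving all $\Pi^1_1$ facts with parameters. Marker's approach would package this as a ``faithful'' embedding from $\mathcal{B}$ into $\mathcal{A}$, under which $\Sigma^1_1$ facts of $\mathcal{A}$ about the image reflect back to $\mathcal{B}$. We want the reverse configuration: a model of $Q(\overline{C})$ sitting as a substructure inside a model of $\neg Q(\overline{C})$, with both satisfying $T$. Upward independence would then give the contradiction. So, as in Marker's proof, build an elementary chain alternating between (elementary extensions of) $\mathcal{A}$ and $\mathcal{B}$ via repeated compactness arguments. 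At each stage, the $\Pi^1_1$ (respectively $\Sigma^1_1$) types of the current image are realized on the other side. The unions $\mathcal{A}^*\equiv\mathcal{A}$ and $\mathcal{B}^*\equiv\mathcal{B}$ then have $\mathcal{A}^*$ a substructure of $\mathcal{B}^*$, with $\overline{C}$ interpreted compatibly. Then $\mathcal{A}^*\models Q(\overline{C})$ gives $\mathcal{B}^*\models Q(\overline{C})$, which is a contradiction.

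The main obstacle is making the chain limits and the ``substructure'' relation work for Henkin models, rather than the more general two-sorted structures compactness produces. Membership has to be interpreted absolutely, extensionality has to hold, and the limit models must still satisfy $T$. This is exactly where $\schoice$ is needed. Its choice schema lets a block of existential second-order quantifiers under universal first-order quantifiers be collapsed to a single existential. That keeps the class of $\Sigma^1_1$-formulas closed under the first-order quantification arising when parameters are eliminated, and it keeps the elementarity of each stage enough to preserve $T$. I would first prove a lemma that any two-sorted model of $T\supseteq\schoice$ is isomorphic to a Henkin model (collapsing by extensionality), and that faithful embeddings between such models become substructure inclusions after this collapse. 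Only then would I run the alternating-chain argument.
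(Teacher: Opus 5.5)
Your reductions of parts (2) and (3) to part (1) are fine, and so is the compactness step via the set $\Gamma$ of $\Sigma^1_1$-consequences of $Q(\overline{C})$; this is the paper's Case~2 in dual form. The model-building half, however, has a genuine gap.

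\textbf{The parameter step is invalid.} Adding parameters from $\mathcal{B}$ to $\Delta$ breaks the consistency argument. From $T\vdash Q(\overline{C})\rightarrow\neg\delta(\overline{b})$ with $\overline{b}$ fresh, what follows is $T\vdash Q(\overline{C})\rightarrow\forall\overline{y}\,\neg\delta(\overline{y})$. Only this universal closure contradicts $\mathcal{B}\models\delta(\overline{b})$, and it is $\Pi^1_2$ as soon as one parameter is second-order. The existential closure is $\Sigma^1_1$, but $\mathcal{B}$ can satisfy it with witnesses other than $\overline{b}$, so there is no contradiction. Even if the step worked, it would map $\mathcal{B}$ into a model of $Q$, which points the wrong way, as you note.

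The fix is to drop the parameters. Your own argument then shows that $T+Q(\overline{C})$ together with the parameter-free $\Pi^1_1$-theory of $\mathcal{B}$ is consistent. This yields $\mathcal{A}\models T+Q(\overline{C})$ such that every $\Sigma^1_1$-sentence true in $\mathcal{A}$ holds in $\mathcal{B}$. That is exactly the paper's Case~1 pair, and no alternating chain is needed to fix the direction.

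\textbf{The missing idea is a faithful embedding.} You must embed $\mathcal{A}$ into (an elementary extension of) $\mathcal{B}$ so that the image is a \emph{Henkin} substructure. That is, every first-order element of $\mathcal{B}$ occurring in a tuple of the image of a second-order object of $\mathcal{A}$ must itself lie in the image. The diagram argument from first-order {\L}o\'s--Tarski does not give this, and your sketch does not say how it is secured.

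The paper passes to a countable recursively saturated elementary extension of the disjoint sum and runs a back-and-forth.
\begin{enumerate}
\item The forth step realizes in $\mathcal{B}$ the $\Sigma^1_1$-type of the next element of $\mathcal{A}$. It is finitely satisfiable because $\exists\upsilon\bigwedge_i\varphi_i$ is again $\Sigma^1_1$.
\item The back step realizes in $\mathcal{A}$ the $\Pi^1_1$-type of a first-order $\beta_i$ lying in some $f(\gamma_p)$. Its finite satisfiability requires transferring $\exists v\,\exists u\,(f(\gamma_p))(u,v)\wedge\bigwedge_i\varphi_i$, with each $\varphi_i$ a $\Pi^1_1$-formula, from $\mathcal{B}$ back to $\mathcal{A}$. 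That formula is $\Pi^1_1$ only because of $\schoice$.
\end{enumerate}
This is where the hypothesis on $T$ enters. It is not needed for parameter elimination, which fails for second-order parameters anyway and is unnecessary once they are dropped. Nor is it needed for preserving $T$ along elementary chains, which is automatic. Turning pre-Henkin models into Henkin ones needs no choice either.

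Proposition~\ref{prop:imagesoffaith} then makes the image a Henkin substructure of $\mathcal{B}$ isomorphic to $\mathcal{A}$, and upward independence gives the contradiction. An alternating chain could be made to do the same job, but only if each back stage pulls back precisely these elements and you verify the finite satisfiability just described. As written, that verification is absent, and the role you assign to $\schoice$ is not the one it actually plays.
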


We prove Theorem~\ref{thm:fefermanesq} in \S\ref{sec:proofmain}, and the proof follows Marker's proof of \cite{Marker1984aa} of Feferman's Preservation Theorem~\cite{Feferman1968-fi}. See the outset of \S\ref{sec:proofmain} for more discussion of Marker's proof.

Note that the converse of Theorem~\ref{thm:fefermanesq} is false, as one can see from Example~\ref{ex:disjunction:1}.

In his later work on the Tarski-Sher logicality thesis, Feferman himself mentions his earlier Preservation Theorem. First, in Feferman~\cite[\S{5}]{Feferman2010-gc}, he mentions it in passing in motivating a discussion of Manders' result that absoluteness conditions will force first-order definability in certain circumstances. Second, in Feferman~\cite[24]{Feferman2015-yr}, he appeals to his Preservation Theorem in order to prove that if a generalized quantifier can be implicitly defined as the unique solution to a sentence, then the generalized quantifier is first-order definable. This seems interesting and worthy of more discussion than I am able to give it here. But nothing in domain independence requires logicality, such as the type (1) quantifier in Example~\ref{ex:montagueindivdiaul} involving a named individual (but see footnote~\ref{fn:john}).

We can improve Theorem~\ref{thm:fefermanesq} by obtaining bounded predicative definability in certain situations. In order to describe these situations, we need one last definition.

\begin{defn}\label{defn:strongexistential} (Strong existential import).

Suppose that $T$ is a theory extending $\mathsf{ACA}_0^{-}$ and suppose that $Q(\overline{R},\overline{R}^{\prime})$ is a generalized quantifier. Then $Q$ has \emph{strong existential import relative to $T$} if $T$ proves 
\begin{equation}\label{eqn:strongexistential1}
\forall \; \overline{R} \; \forall \; \overline{R}^{\prime} \; \big(Q(\overline{R}, \overline{R}^{\prime})\rightarrow \mathscr{U}\overline{R}\neq \emptyset\big)
\end{equation}
and if for every individual constant $c$ in the signature, $T$ proves:
\begin{equation}\label{eqn:strongexistential2}
\forall \; \overline{R} \; \forall \; \overline{R}^{\prime} \; \big(Q(\overline{R}, \overline{R}^{\prime})\rightarrow (\mathscr{U}\overline{R})c \big)
\end{equation}
\end{defn}

To secure (\ref{eqn:strongexistential1}) one can of course define $Q^{\prime}(\overline{R}, \overline{R}^{\prime})$ to be $Q(\overline{R}, \overline{R}^{\prime})\wedge \mathscr{U}\overline{R}\neq \emptyset$. This is how the Aristotelian universal (Example~\ref{ex:allar}) is defined from the ordinary universal (Example~\ref{ex:all}). Likewise, note that (\ref{eqn:strongexistential2}) is satisfied by the paradigmatic example of a generalized quantifier defined with a constant symbol, namely the example of Montagovian individuals (Example~\ref{ex:montagueindivdiaul}). The general reason why we need to restrict to generalized quantifiers with strong existential import is that the empty set interacts poorly with binding (cf. Remark~\ref{rmk:binding:poorly}) and because substructures need to contain the denotations of the individual constant symbols (cf. Definition~\ref{defn:substructure}). 

Restricting to generalized quantifiers with strong existential import does not seem like a deep restriction given the way in which $Q(\emptyset, \overline{R}^{\prime})$ is controlled entirely by $Q(\emptyset, \emptyset)$ for Henkin conservative quantifiers (cf. Remark~\ref{rmk:consequence:henkinempty}). In most cases, one anticipates that either (i)~$Q(\emptyset, \emptyset)$ will be derivable from the background theory $T$ or (ii)~$\neg Q(\emptyset, \emptyset)$ will be derivable from the background theory $T$.  In case~(i), we simply define $Q^{\prime}(\overline{R}, \overline{R}^{\prime})$ by $Q(\overline{R}, \overline{R}^{\prime})\wedge \mathscr{U}\overline{R}\neq \emptyset$ and work with it instead. In case~(ii), we get an entailment to (\ref{eqn:strongexistential1}) when $Q$ is Henkin conservative with respect to the background theory $T$  (cf. Remark~\ref{rmk:consequence:henkinempty}). Note that in Examples~\ref{ex:some}-\ref{ex:usually} we are in case~(ii) for the minimal background theory $\mathsf{ACA}_0^{-}$.

Here is then the theorem on bounded definability:
\begin{restatable}{thm}{thmfefermanesqfour}\label{thm:fefermanesqfour} (Sufficient conditions for bounded $\Delta^1_1$-definability).

Suppose that $T$ is a theory extending $\schoice$ in a signature with only finitely many individual constants which is closed downwards under internal substructures. Suppose that $Q$ is a generalized quantifier which is Henkin domain independent relative to $T$ and Henkin conservative relative to $T$ and has strong existential import relative to $T$. Then~$Q$ is bounded $\Delta^1_1$-definable relative to $T$.
\end{restatable}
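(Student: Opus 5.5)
The plan is to bootstrap from Theorem~\ref{thm:fefermanesq}, then use conservativity, internal substructures, and Theorem~\ref{thm:internal} to move the defining formulas down to the field $\mathscr{U}\overline{R}$. Since $Q$ is Henkin domain independent relative to $T$ and $T$ extends $\schoice$, Theorem~\ref{thm:fefermanesq} gives a $\Sigma^1_1$-formula $\Phi(\overline{R},\overline{R}^{\prime})$ and a $\Pi^1_1$-formula $\Psi(\overline{R},\overline{R}^{\prime})$. Each is provably equivalent to $Q$ in $T$.

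Let $c_1,\ldots,c_k$ be the finitely many individual constants, and write $X$ for the term $\mathscr{U}\overline{R}$. Let $\theta$ be the first-order formula $\exists\, x\,(x=x)\wedge\bigwedge_{i=1}^k \mathbb{V}c_i$, so that $\theta^X$ is $\exists\, x\, Xx\wedge\bigwedge_i Xc_i$. This is where finiteness of the signature is used. The candidate bounded definitions are
\begin{equation*}
\Phi^{\prime}:=(\theta\wedge\Phi)^{X},\qquad \Psi^{\prime}:=(\theta\wedge\Psi)^{X}.
\end{equation*}
After routine prenexing, which pulls the second-order quantifiers past the first-order conjunct $\theta$, these are bounded $\Sigma^1_1$ and bounded $\Pi^1_1$ respectively. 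By the Completeness Theorem for Henkin models, it suffices to show that every Henkin model $\mathcal{B}$ of $T$ satisfies $Q(\overline{R},\overline{R}^{\prime})\leftrightarrow\Phi^{\prime}(\overline{R},\overline{R}^{\prime})$ for all $\overline{R},\overline{R}^{\prime}$ from $\mathcal{B}$, and likewise for $\Psi^{\prime}$.

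Fix $\mathcal{B}$, $\overline{R}$, $\overline{R}^{\prime}$, and put $X=\mathscr{U}\overline{R}$, which lies in $S_1[B]$ by $\aca$ comprehension.

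\emph{Case 1: $X$ is empty or omits some $c_i^{\mathcal{B}}$.} Strong existential import gives $\mathcal{B}\models\neg Q$. On the other side, $\theta^X$ fails, so both $\Phi^{\prime}$ and $\Psi^{\prime}$ fail, and the equivalences hold.

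\emph{Case 2: $X$ is nonempty and contains every constant.} Form the internal substructure $\mathcal{A}$ of $\mathcal{B}$ associated to $X$ (Definition~\ref{defn:internal}). Then $\mathcal{A}\models T$ because $T$ is closed downward under internal substructures. Moreover $\overline{R}$ and $X\cap\overline{R}^{\prime}$ lie in $S_{\overline{n}}[A]$. The chain of equivalences is:
\begin{align*}
\mathcal{B}\models Q(\overline{R},\overline{R}^{\prime})
&\iff \mathcal{B}\models Q(\overline{R},X\cap\overline{R}^{\prime}) &&\text{(Henkin conservativity in $\mathcal{B}$)}\\
&\iff \mathcal{A}\models Q(\overline{R},X\cap\overline{R}^{\prime}) &&\text{(Henkin domain independence)}\\
&\iff \mathcal{A}\models \Phi(\overline{R},X\cap\overline{R}^{\prime}) &&\text{(since $\mathcal{A}\models T$)}\\
&\iff \mathcal{B}\models \Phi^{X}(\overline{R},X\cap\overline{R}^{\prime}) &&\text{(Theorem~\ref{thm:internal})}\\
&\iff \mathcal{B}\models \Phi^{X}(\overline{R},\overline{R}^{\prime}).
\end{align*}
The same chain works with $\Psi$ in place of $\Phi$. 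Since $\theta^X$ holds in this case, $\Phi^{\prime}$ reduces to $\Phi^X$ and $\Psi^{\prime}$ to $\Psi^X$, which finishes the case.

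The step I expect to need the most care is the last one, replacing $X\cap\overline{R}^{\prime}$ by $\overline{R}^{\prime}$ inside $\Phi^X$. I would prove it by induction on subformulas of $\Phi$, with an assignment in which every first-order variable takes values in $X$ and every second-order variable takes values that are subsets of powers of $X$. The key point is that each atomic occurrence $R^{\prime}_j\overline{t}$ has all terms $\overline{t}$ either bound variables, which range over $X$, or constants, which lie in $X$ in this case. Hence $R^{\prime}_j\overline{t}$ and $(X\cap R^{\prime}_j)\overline{t}$ agree.

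Two smaller points also need checking. First, the free variables of $X$ must not be bound in $\Phi$, which can be arranged by renaming. Second, the prenexing must keep the formulas in the bounded form demanded by Definition~\ref{defn:bounded}, namely of the shape $\Phi^{\prime\prime X}$ for a single $\Sigma^1_1$ (resp.\ $\Pi^1_1$) formula $\Phi^{\prime\prime}$.
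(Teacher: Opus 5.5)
Your argument is correct and is essentially the paper's proof. Theorem~\ref{thm:fefermanesq} supplies the $\Sigma^1_1$- and $\Pi^1_1$-definitions. Then, as in Theorem~\ref{thm:fefermanesqfourpre}, you add dummy conjuncts, split on whether $\mathscr{U}\overline{R}$ is empty or misses a constant, and otherwise pass through conservativity, domain independence on the internal substructure, and Theorem~\ref{thm:internal}. Your explicit removal of $X\cap\overline{R}^{\prime}$ inside $\Phi^X$ fills in a step the paper leaves implicit, since the paper stops at $\Phi^{\mathscr{U}\overline{R}}(\overline{R},\overline{R}^{\prime}\cap\mathscr{U}\overline{R})$.

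One slip needs fixing. Binding only relativizes quantifiers, so $(\mathbb{V}c_i)^X$ is just the quantifier-free $c_i=c_i$, not $Xc_i$. Use the conjuncts $\exists\,x\,(x=c_i)$, as the paper does. Then $\theta^X$ really is equivalent to $\exists\,x\,Xx\wedge\bigwedge_i Xc_i$, and your Case~1 goes through.
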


The proof of Theorem~\ref{thm:fefermanesqfour} is given at the end of \S\ref{sec:proofmain}. Note that a partial converse to this Theorem is in Corollary~\ref{cor:stable}. However, there are many bounded formulas which are not Henkin conservative relative to natural theories, and so a full converse which goes from bounded definability to Henkin conservativity is not possible.

It is no accident that domain independence and conservativity result in predicative definitions. For, historically a motivating idea of predicative constraints was that these definitions were stable across expansions,\footnote{For references to, and discussion of, Poincar\'e, Weyl, and Kreisel, see \cite[\S{2.3}]{Dean2017-mv} and \cite[\S{4}]{Walsh2016-uo}.} and Theorem~\ref{thm:fefermanesq}(\ref{thm:fefermanesq:3}) and Theorem \ref{thm:fefermanesqfour} give vivid displays of how certain kinds of stability entail varieties of $\Delta^1_1$-definability, which is the most traditional formal precisification of predicative constraints. By Theorem~\ref{thm:stable}, one can of course see that certain kinds of predicative definability entail a kind of stability or absoluteness. The surprise of course in Theorems~\ref{thm:fefermanesq}, \ref{thm:fefermanesqfour} is that this absoluteness requires definability of any kind.

I have not seen anything like Theorem~\ref{thm:fefermanesqfour} in the literature. However, in \cite{van-Benthem1984-wp}, van Benthem posed the problem of ``find[ing] a preservation result characterizing conservativity of first-order sentences'' (\cite[462]{van-Benthem1984-wp}, cf. \cite[38]{van-Benthem1986-qu}). He then says: ``The obvious conjecture is that $Q(X,Y)$ is conservative in $X$ iff $Q$ is logically equivalent to some sentence with all quantifiers $X$-restricted. Kit Fine observed that this follows, indeed, from the work of Feferman'' where Feferman's \cite{Feferman1968-fi} is cited (notation changed in quotation to match my own notation). But this discussion seems to assume at the outset that~$Q$ is first-order definable, whereas Theorem~\ref{thm:fefermanesqfour} is compatible with~$Q$ merely being $\Delta^1_1$-definable, as will be the case with the cardinality quantifiers (cf. Theorem~\ref{thmmostdefinabel}). I do not know anywhere that van Benthem or Fine say how they take Feferman \cite{Feferman1968-fi} to help establish the stated results for first-order definable $Q$. That said, the proof of Theorem~\ref{thm:fefermanesqfour}, given at the close of \S\ref{sec:proofmain}, is comparatively direct once the definitions are in place.

\section{Applications to explanatory projects in formal semantics}\label{sec:discuss}

The semantics of generalized quantifiers are again given by a collection of tuples of subsets of Cartesian products of the underlying domain.\footnote{This is a key tenant of the tradition of generalized quantifiers (cf. footnote~\ref{fn:one}), but also present in introductory semantics textbooks, cf. \cite[140 ff]{Heim1998-bs}, \cite[\S{7.2}, pp. 223 ff]{Gamut1991-gc}.} A chief challenge is to explain why so few of these are realized in natural language. For instance, Sternefield writes in a recent survey: ``the number of possible quantifiers left as a result of such axioms as (17) [conservativity] is still huge. [\ldots] there is nothing in the theory that would allow for isolating exactly the `primitive' or even the `lexical' quantifiers of a natural language'' (\cite[11]{Sternefeld2020-vf}). Similarly, von Fintel-Matthewson say that  ``It is clear that natural languages do not nearly make full use of the logically possible space of determiner meanings. The conservativity universal may be fraying at the edges, but the investigation of the limits on possible determiner
meanings is still a promising avenue of research'' (\cite[163-164]{von-Fintel2008-tz}).

The main reason why Theorems~\ref{thm:fefermanesq}, \ref{thm:fefermanesqfour} are significant is that they explain why so few options for generalized quantifiers are realized in natural language. Namely, if one extends the domain independence conditions and the conservativity conditions to Henkin models, then these theorems directly entail the comparative low-level definability of the generalized quantifiers. Or, if one thought that domain independence and conservativity were just as much in need of explanation as the limited options for generalized quantifier meanings, then these theorems say that any explanation of domain independence and conservativity that also works for Henkin models itself suffices to explain the limited options for generalized quantifier meanings; and hence these explanatory tasks are not independent of one another.

Before turning to whether the motivations for the semantic constraints extend to Henkin models, it is worth being a little more explicit about what it means to say that so few of the theoretically possible options for generalized quantifiers are realized in natural language. This should not be understood as a claim about what generalized quantifiers appear in the lexicon of natural languages. For, the lexicon will always be limited and will face pressure to be sparse, and what a sparse lexicon can generate can be vast: for instance, generalized quantifiers are closed under Boolean operations.\footnote{\cite[91-92]{Peters2008aa}, \cite[\S{4.1.1}]{Szabolcsi2010-vi}. This has a prominent place in \cite[265 ff]{Keenan1986-pq}.} I take it rather that the following is intended: there are some determiners which we recognize as generalized quantifiers, and of this subclass of determiners, their truth-conditions seem to be a comparatively small slice of those which are theoretically possible, given that generalized quantifiers have as their meanings simply arbitrary collections of tuples of subsets of Cartesian products of the underlying domain.

It is however important to be precise about the meaning of ``comparatively small.''  On the one hand, it might be understood in terms of \emph{numerical constraints}. For instance, if a standard model has first-order domain with $n$ elements, then there are $2^{2^n\times 2^n}$ many theoretically possible denotations for type $(1,1)$ quantifiers, but only $2^{3^n}$ many theoretically possible denotations for type $(1,1)$ quantifiers which are conservative.\footnote{\cite[327]{Peters2008aa}. For considerations like this, see \cite[290]{Keenan1986-pq}, \cite[9]{van-Benthem1986-qu}, \cite[\S{4}, p. 54]{Keenan1996-yi}, \cite[10]{Sternefeld2020-vf}.}  Alternatively, it might be understood in terms of \emph{definability constraints} on the truth conditions for these quantifiers. That is, in inspecting the list of examples of generalized quantifiers in \S\ref{sec:genq}, one notes that they all seem to be easy or simple to define, and definitions featuring e.g. 27 second-order quantifiers appear to be absent. Obviously Theorems~\ref{thm:fefermanesq}, \ref{thm:fefermanesqfour} fare better in contributing to the explanatory task at hand when one works with definability constraints rather than numerical constraints.

But one's applications of Theorems~\ref{thm:fefermanesq}, \ref{thm:fefermanesqfour} will vary depending on what one thinks of Example~\ref{ex:branchmost}, our paradigmatic example of a branching quantifier (cf. (\ref{eqn:rebarwisebranching})). Examples like these are often bounded definable, and by  boundedness, they are domain invariant across standard models by Corollary~\ref{cor:stable:standard}, and they are often Henkin conservative and have strong existential import relative to minimal and natural theories. But Theorems~\ref{thm:fefermanesq},\ref{thm:fefermanesqfour} indicate that they are Henkin domain invariant relative to a theory only if they are bounded $\Delta^1_1$-definable relative to the theory. Some of them are demonstrably not so definable (cf. Proposition~\ref{prop:rebarwisebranching}), and for others one may suspect that are not so definable. For someone who took Example~\ref{ex:branchmost} as a core rather than periphery case of a generalized quantifier, this would be evidence that not all generalized quantifiers are downwards Henkin invariant. This is compatible with still holding that generalized quantifiers of natural languages are Henkin upwards invariant as well as Henkin conservative. From the perspective of someone who held this, Theorems~\ref{thm:fefermanesq}(\ref{thm:fefermanesq:1}) could still well serve to explain why so few theoretically possible generalized quantifiers appear in natural language, but with the definability constraint set at $\Sigma^1_1$-definable rather than bounded $\Delta^1_1$-definable.

This discussion also gestures towards a possible method, which is simultaneously empirical and mathematical, of refuting the contention that the generalized quantifiers of natural language are Henkin domain invariant as well as Henkin conservative. Namely, one could try to produce an example of something which was recognizably a generalized quantifier of natural language and yet which was demonstrably not bounded $\Delta^1_1$-definable relative to some natural theory. Likewise, apropos the previous paragraph, one could attempt to produce an example, like Example~\ref{ex:branchmost}, at the periphery of the generalized quantifiers of natural language, but which was yet demonstrably not $\Sigma^1_1$-definable relative to some natural theory. Such examples would of course be of further interest since they do not occur in the lists of the generalized quantifiers of natural language in the standard surveys.\footnote{Cf. footnote~\ref{fn:one}}$^{\mbox{,\;}}$\footnote{For a possible source of such examples, see the Keenan-Stavi Theorem discussed at the close of \S\ref{sec:further}.}

\section{Motivations for semantic constraints extend to Henkin models}\label{sec:discuss:2}

Given the contributions to the explanatory projects discussed in the previous section, a real question then is whether the motivations for insisting on domain independence and conservativity extend from standard models to Henkin models. This seems very clear in the case of conservativity. For, conservativity is just a biconditional, namely in the case of type $(1,1)$ quantifiers, the biconditional $Q(X,Y)\leftrightarrow Q(X,X\cap Y)$.\footnote{In the general case, it is the biconditional~(\ref{eqn:henkinconservation}) from Definition~\ref{defn:Hcon}.} Since this is expressible in the object language of second-order logic and hence does not distinguish between Henkin models and standard models, it seems that whatever motivates it or explains it will not distinguish between these two classes of models. For instance, conservativity is often explained by natural language needing a mechanism by which to temporarily introduce a domain of discourse. Keenan calls conservativity the ``domain setting'' property of a generalized quantifier.\footnote{\cite[1066]{Keenan2011-pt}. Likewise, von Fintel-Matthewson write: ``What this [conservativity] means is that the first argument of a determiner `sets the scene'~'' (\cite[161]{von-Fintel2008-tz}).} If this domain setting role is accomplished by the aforementioned biconditional, then it can be done in both Henkin and standard models. However, conservativity is sometimes explained by appealing to structural features of syntactic processing.\footnote{This is the kind of explanation offered by Romoli \cite{Romoli2010-ra}, \cite{Romoli2015-qo}. Romoli is building off of prior work by Chierchia, Fox, Ludlow, and Sportiche. See \cite[\S{4.2.2.2}]{Szabolcsi2010-vi} for a survey of some of this work.} In this case, to the extent that the syntactic processing is constrained to act primarily on the logical forms correlated with $Q(X,Y)$ and $Q(X,X\cap Y)$, it seems this will not distinguish between Henkin and standard models.

As for domain independence, Peters and Westerst{\aa}hl describe it as follows:
\begin{quote}
Roughly, it is the property that the behavior of the quantifier doesn't change when you extend the universe (\cite{Peters2008aa} p. 100).
\end{quote}
The lack of change here of course has to be understood to mean that subsets which are in the denotation of the quantifier prior to the extension (or contraction) of the universe continue to be in the denotation of the quantifier after the extension (or contraction). Hence, the question is whether any of the reasons we have for insisting on this lack of change in standard models carries over to Henkin models. All finite models which satisfy first-order comprehension are standard models, and so looking only at finite models does not distinguish between the Henkin and standard semantics. But if one wants a semantics with a Soundness and Completeness Theorem and with the recursive enumerability of the validity relation, then the Henkin semantics are the clear choice.\footnote{By Trakhtenbrot's Theorem (e.g. \cite[Theorem X.5.4]{Ebbinghaus2021-jl}) validity on finite models is not recursively enumerable and encodes the complement of the halting set. Hence to maintain recursive enumerability of the validity relation, one has to countenance at least some infinite models. By using Dedekind finiteness, as we do in \S\ref{sec:cardinality}, one can still work with the Henkin semantics and a certain kind of finiteness postulate.} Further, experience suggests that many arguments involving standard models can be recast with minimal modification as derivations in second-order logic which are hence applicable to Henkin models. For, one simply collects together the hypotheses about models to which one is appealing, and if they simply speak about elements of the underlying domain and relations on the underlying domain, then they can be regimented into sentences in the object-language of second-order logic.\footnote{For instance, \cite[Part II]{Button2018-ux} explores how many authors have recast the famous categoricity theorems of second-order arithmetic and set theory as derivations in second-order logic.}

Peters and Westerst{\aa}hl also describe domain independence as follows:
\begin{quote}
What, then, does \emph{constancy} of a quantifier $Q$ mean? Intuitively, it means that $Q$ `means the same', or is \emph{given by the same rule}, on every universe. But the identity conditions for meanings, or for rules, are notoriously difficult to lay down. Indeed, it is not clear that the concept of constancy can be explained fully in an extensional framework like ours. But some things can be said. [\P] A special case is when the rule does not mention the universe $M$ at all. This seems
to be what $\mathsf{Ext}$ amounts to (\cite{Peters2008aa} p. 106).
\end{quote}
If `given by the same rule' simply means `given by a single rule definable in second-order logic,' as in Definition~\ref{defn:gq}, then this is very undemanding requirement, and will not get one to domain independence, in either the standard or Henkin semantics. But the latter part of the quotation seems to get closer to the core idea: namely a quantifier is domain independent if it does not query the underlying domain in the wrong way.

This can be borne out by reflecting back on the type $(1,1)$ generalized quantifier $Q(X,Y)$ from Example~\ref{ex:disjunction:1}, which is the existential quantifier if there are fewer than three objects in the underlying domain but is the universal quantifier if there are at least three objects in the underlying domain. This disjunctive example acts differently on $X,Y$ depending on how things stand in other first-order parts of the universe; and as we saw in Example~\ref{ex:disjunction:1}, it is not domain independent. This motivation then extends clearly to Henkin models, and just as one wants to restrict attention to type $(1,1)$ quantifier $Q(X,Y)$ which leave the focus on $X,Y$ and do not query other first-order parts of the universe, so one would want them to not query other second-order parts of the universe. This was illustrated by Examples~\ref{ex:disjunction:2}, \ref{ex:disjunction:3}, where common idea was that the quantifier was the existential quantifier if the second-order part of the universe was expressively impoverished and it was the universal quantifier if the second-order part of the universe was expressively rich.

Hence the suggestion is that one should accept both domain independence and Henkin independence of generalized quantifiers because one wants to restrict attention to type $(1,1)$ generalized quantifiers $Q(X,Y)$ whose satisfaction conditions pertain to $X,Y$ and not to other regions of the first- or second-order domain. (For notational ease, we continue to focus the discussion on type $(1,1)$ generalized quantifiers; but this applies \emph{mutatis mutandis} for generalized quantifiers of other types). An advantage of this proposal is that brings the motivation for domain independence closer to the motivation for conservativity.\footnote{There is precedent for bringing them closer in Keenan \cite{Keenan1996-yi}, Szabolcsi \cite[\S{4.2.2.3}]{Szabolcsi2010-vi}, Glanzberg \cite[803]{Glanzberg2006-is}. For instance, Keenan writes that ``It [the role of the noun argument] serves to limit the domain of objects we use the predicate to say something about. This simple idea of domain restriction is captured in the literature with two independent constraints: \emph{conservativity} and \emph{[domain]-extension}'' (\cite[\S{4}, p. 54]{Keenan1996-yi}). Later in the same section, he says more about domain limitation, to wit: ``But natural languages are general purpose. Speakers use them to talk about anything they want, and common nouns in English provide the means to delimit `on line' what speakers talk about and quantify over'' (\cite[\S{4}, p. 56]{Keenan1996-yi}).} Recall that the latter was a ``domain setting'' feature, where the idea is that $Q(X,Y)$ results in making $X$ the temporary domain of discourse to which $Y$ is restricted. Our discussion of the motivation of domain independence suggests the further thought: in addition to considering only $Y$'s intersection with $X$, one must \emph{not} query either first-order or second-order facts about the model which are outside of $X$.

However, the proposal does come with disadvantages and challenges. First, the proposal introduces more explicitly notions of ``aboutness'' or ``topicality'' into this understanding of the motivations for domain independence and conservativity.\footnote{Szabolcsi is explicit about this, writing: ``The NP-set serves as the determiner's topic in the intuitive sense'' (\cite[\S{4.2.2.3}]{Szabolcsi2010-vi}).}$^{\mbox{,\;}}$\footnote{One might object that nothing like this needed, and one can just get by with binding to and intersecting with $X$ in a type $(1,1)$ quantifier $Q(X,Y)$. While this freezes the first-order objects as one moves between Henkin models, it will not freeze the second-order entities. That said, there are more radical ways to freeze the second-order entities: namely, one could just restrict throughout to standard models. But that would come with the cost of making the validity relation massively intractable. However, there may be more nuanced proposals in the offing for how, in theory and practice, we freeze the second-order domain.} It is well-known that this kind of intentionality is hard to explicate and model, at least when one restricts oneself to the tools of formal semantics and logic. Second, the proposal conceives of a core task of domain independence as getting rid of the disjunctive examples in \S\ref{sec:disjunctiveexamples}. But the history of Nelson's grue should make us cautious in claiming anything more than provisional success in weeding out disjunctive laws. Third, a presupposition of the above discussion is that linguistic agents are moving between different Henkin models in the same way that they are moving between contexts in a more quotidian sense, and that the semantic constraints on quantifiers allows agents to hone in and focus on a topic that is fixed amid this wider flux. It does seem that some variation in the second-order domain is quotidian: in some discussions a limited conception of properties is at issue and in other discussions a more fine-grained conception of properties is at issue. That said, one would want a better understanding of the circumstances in which the second-order domain becomes subject to substantial variation, and whether the variation could be understood like Examples~\ref{ex:disjunction:2}-\ref{ex:disjunction:3} in which one goes back and forth between a mathematically impoverished and a mathematically enriched setting.

\section{Lowering to first-order definability}\label{sec:lower}

The theorems discussed thus far, namely Theorems~\ref{thm:fefermanesq}, \ref{thm:fefermanesqfour}, get one to bounded $\Delta^1_1$-definability. But the standard list of examples, such as we sampled in \S\ref{sec:genq}, consists in large part of generalized quantifiers which are first-order definable. In this section we show that by switching to a slightly richer background theory we can improve the definability to first-order definability.

We expand from pure second-order logic to a well-order:\footnote{This does two things for us. First, it gives us definable Skolem functions (cf. Definition~\ref{defn:skolem} and Example~\ref{ex:skolem}), which we need for the proof of Theorems~\ref{thm:fodefinable}, \ref{thm:fodefinabletwo}, given in \S\ref{sec:fodefinable}; and it is a natural case of definable Skolem functions which results in a theory which is downward closed under internal substructures. Second, it allows us to prove results about cardinality in the Dedekind finite setting, which we discuss in \S\ref{sec:cardinality}, and prove in \S\ref{sec:cardinalityqs}.}
\begin{defn}\label{defn:two} (Well-order notions and axiomatizing finitude).

Let $T^{-}$ be one of $\aca$, $\dcomp$, $\schoice$, $\scomp$, or $\sol$.

Then we define $T^{\mathsf{wo}}$ to be the expansion of $T^{-}$ by the axioms saying that a fresh binary relation symbol $\preceq$ is a well-order of the first-order domain:
\begin{itemize}[leftmargin=*]
    \item \emph{Reflexivity}: $\forall \; x \; x\preceq x$
    \item \emph{Anti-symmetry}: $\forall \; x \; \forall \; y \; ((x\preceq y \; \& \; y\preceq x)\rightarrow x=y)$
    \item \emph{Transitivity}: $\forall \; x \; \forall \; y \; \forall \; z \; ((x\preceq y \; \& \; y\preceq z) \rightarrow x\preceq z)$
    \item \emph{Linearity}: $\forall \; x \; \forall \; y \; (x\preceq y \vee y\preceq x)$
    \item \emph{Well-ordering}: $\forall \; X \; ((\exists \; x \; Xx)\rightarrow (\exists \; x_0 \; Xx_0 \; \& \; \forall \; y \; (Xy\rightarrow x_0\preceq y)))$
\end{itemize}
Finally $T^{\mathsf{fin}}$ is the expansion of $T^{\mathsf{wo}}$ by the axiom which says that universe $\mathbb{V}$ is Dedekind finite (cf. Definition~\ref{defn:cardinality}).
\end{defn}

Note that if  $T^{-}$ be one of $\aca$, $\dcomp$, $\schoice$, $\scomp$, or $\sol$, then both $T^{\mathsf{wo}}$ and $T^{\mathsf{fin}}$ are closed downwards under internal substructures (cf. Definition~\ref{defn:closeddownardinternal}), since initial segments of well-orders are still well-orders, and since Dedekind infinitude is bounded $\Sigma^1_1$-definable.

Then we have the following results: 

\begin{restatable}{thm}{thmfodefinable}\label{thm:fodefinable}
(From $\Delta^1_1$-definability to first-order definability).

Suppose that $Q$ is a generalized quantifier which is $\Delta^1_1$-definable relative to $\schoicewo$. Then $Q$ is first-order definable relative to $\schoicewo$.

\end{restatable}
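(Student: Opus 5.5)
The plan is to adapt the classical proof that a property which is both $\Sigma^1_1$ and $\Pi^1_1$ over a first-order theory with definable Skolem functions is first-order. The key is that, in $\schoicewo$, the well-order $\preceq$ yields definable Skolem functions: for a first-order formula $\exists y\,\theta(\overline{x},y,\overline{R})$ one can define the $\preceq$-least witness, and the well-ordering axiom guarantees it exists whenever a witness exists. This is the role flagged in the footnote to Definition~\ref{defn:two}.

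Suppose $T=\schoicewo$ proves $\forall\overline{R}\,(Q(\overline{R})\leftrightarrow \exists \overline{S}\,\varphi(\overline{R},\overline{S}))$ and $\forall\overline{R}\,(Q(\overline{R})\leftrightarrow \forall \overline{S}\,\psi(\overline{R},\overline{S}))$, with $\varphi,\psi$ first-order. Then $T$ proves that $\exists\overline{S}\,\varphi(\overline{R},\overline{S})$ and $\exists \overline{S}'\,\neg\psi(\overline{R},\overline{S}')$ are jointly inconsistent for all $\overline{R}$. I will treat $\overline{R}$ as new relation constants and $\overline{S},\overline{S}'$ as further new relation symbols. The aim is to produce, by a Craig-interpolation argument, a first-order $\chi(\overline{R})$ separating them; then $T$ proves $Q(\overline{R})\leftrightarrow\chi(\overline{R})$. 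Interpolation is only directly available for first-order logic, so I would work in the two-sorted first-order reformulation of Henkin semantics. There, $T$ is a first-order theory in the sorts of individuals and of $n$-ary relations, with membership symbols. The difficulty is that the comprehension and $\Sigma^1_1$-choice axioms mention the relation sorts, so they are not simply ``about $\overline{R}$ and $\preceq$''. To handle this I would use the model-theoretic route of Marker, as in the proof of Theorem~\ref{thm:fefermanesq}. Take $\Gamma(\overline{R})$ to be the set of first-order (in the original sense) consequences of $T+Q(\overline{R})$. I want to show that $T+\Gamma(\overline{R})\vdash Q(\overline{R})$; compactness then gives a single $\chi$.

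So let $\mathcal{A}\models T+\Gamma(\overline{R})$. By a standard compactness argument there is $\mathcal{B}\models T+Q$ whose first-order theory with parameters in $\overline{R}$ matches that of $\mathcal{A}$. Both models have definable Skolem functions via $\preceq$, so I would replace each by its Skolem hull in the first-order part generated from $\overline{R}$ (the $\preceq$-definable closure). The next step is to show that these hulls, equipped with only the relations first-order definable from $\overline{R}$ and $\preceq$, are isomorphic, and are models of $T$ that are substructures of the respective ambient models. The well-order is what makes this definable closure an elementary-in-first-order-part object, and it is closed under internal substructures.

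Using $\Sigma^1_1$-upward preservation (Theorem~\ref{thm:stable}), $Q$ passes from $\mathcal{B}$'s hull to $\mathcal{B}$ as the $\Sigma^1_1$ form; applying the $\Pi^1_1$ form downward moves it into the hull. Transferring across the isomorphism and then upward in $\mathcal{A}$ via the $\Sigma^1_1$ form gives $\mathcal{A}\models Q$. The main obstacle is verifying that the hull with only definable relations actually satisfies $\Sigma^1_1$-choice, since choice is not obviously inherited by definable-closure models. To address it, I would first try to argue that in the presence of $\preceq$, $\Sigma^1_1$-choice instances over this minimal model reduce to first-order instances by choosing $\preceq$-least witnesses. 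Failing that, I would relax to showing that the relevant models are merely faithful in Marker's sense (\S\ref{sec:henkinish}), which suffices for the preservation steps.
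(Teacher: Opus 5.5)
There is a genuine gap at exactly the step you flag as the main obstacle, and neither fallback closes it.

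\textbf{Why the key step fails.} Your chain needs the hull $\mathcal{H}_A=(H_A,\mathrm{Def}(H_A))$ to satisfy the $\Delta^1_1$-equivalence. That is the only way you turn $\forall \overline{S}\,\psi$ into $\exists \overline{S}\,\varphi$ inside $\mathcal{H}_A$, and you propose to get it from $\mathcal{H}_A\models\schoicewo$. This is false in general.

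\begin{itemize}
\item[(i)] Let $\mathcal{A}$ be the standard model on $\omega$ with the usual order, which is a model of $\schoicewo$, and let $\overline{R}$ be empty. Then $H_A=\omega$ and $\mathcal{H}_A=(\omega,\mathrm{Def}(\omega,\le))$. Its definable unary sets are finite or cofinite.
\item[(ii)] Let $\varphi(x,R)$ say that every element of $R$ is $\preceq x$, that $R$ contains the least element $0$, and that $Rv\leftrightarrow\neg R(v+1)$ for all $v\prec x$.
\item[(iii)] Each $n$ has a finite, hence definable, witness: the evens $\le n$. But a definable $R'$ with $R'[n]$ equal to the evens $\le n$ for every $n$ would make the set of evens $\{v: R'vv\}$ definable.
\end{itemize}

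So $\Sigma^1_1$-choice fails in the hull. Choosing $\preceq$-least witnesses gives Skolem functions for individuals. Second-order witnesses, however, are indexed by formulas, not by elements. Uniformizing them requires cutting an infinite disjunction over formulas down to a finite one. Recursive saturation supplies that, and definable-closure models like $(\omega,<)$ lack it. Retreating to faithfulness does not help: the preservation steps are not the problem, the validity of the $\Delta^1_1$-equivalence inside the hull is.

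A separate, smaller point. The hull is in general not a substructure of $\mathcal{A}$ in the sense of Definition~\ref{defn:substructure}, since a set defined in $H_A$ need not be an element of $S_n[A]$. Also, Theorem~\ref{thm:stable} covers only bounded formulas. Your transfers should instead use the map sending each definable set of the hull to the set defined by the same formula in $\mathcal{A}$. Because $H_A$ is an elementary substructure of the first-order reduct, this map is an embedding that is elementary for first-order formulas.

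\textbf{The missing ingredient.} What you need is the Barwise--Schlipf conservation result, which the paper proves as Theorem~\ref{thm:conservative}: $\schoicewo$ is $\Pi^1_2$-conservative over $\mathsf{ACA_0^{wo}}$. The proof passes to a recursively saturated elementary extension and uses the $\preceq$-definable Skolem functions. The two halves of the $\Delta^1_1$-equivalence are a $\Pi^1_1$ sentence and a $\Pi^1_2$ sentence, so $\mathsf{ACA_0^{wo}}$ already proves the equivalence. $\mathsf{ACA_0^{wo}}$ does hold in every $\mathrm{Def}$-model of the first-order well-order schema, your hulls included. With that lemma added, your hull-and-isomorphism argument goes through. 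The paper instead finishes with a direct compactness argument over the first-order theory of well-orders (Theorem~\ref{thm:fromdeltatoaca}), which avoids hulls entirely.
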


\begin{restatable}{thm}{thmfodefinabletwo}\label{thm:fodefinabletwo}
(Sufficient condition for bounded first-order definable).

Suppose that $Q$ is a generalized quantifier which is Henkin domain invariant relative to $\schoicewo$ and Henkin conservative relative to $\schoicewo$ and has strong existential import relative to relative to $\schoicewo$. Then $Q$ is bounded first-order definable relative to $\schoicewo$.
\end{restatable}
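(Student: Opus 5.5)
The plan is to combine Theorem~\ref{thm:fefermanesqfour} with Theorem~\ref{thm:fodefinable}, checking the bounded character survives the lowering. First I check that $\schoicewo$ meets the hypotheses of Theorem~\ref{thm:fefermanesqfour}. It extends $\schoice$. Its signature adds only the binary relation symbol $\preceq$, so it has finitely many (indeed no extra) individual constants. It is closed downwards under internal substructures, as noted after Definition~\ref{defn:two}: the restriction of a well-order to $A\in S_1[B]$ is a well-order, since every non-empty $X\in S_1[A]$ is in $S_1[B]$ and has a $\preceq$-least element there. Theorem~\ref{thm:fefermanesqfour} then gives $\Sigma^1_1$ and $\Pi^1_1$ formulas $\Phi,\Psi$ with
\[
\schoicewo \vdash \forall \; \overline{R} \; \forall \; \overline{R}^{\prime} \; \big(Q(\overline{R},\overline{R}^{\prime})\leftrightarrow \Phi^{\mathscr{U}\overline{R}}(\overline{R},\overline{R}^{\prime})\big),
\]
and likewise for $\Psi$.

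Next I lower $\Phi$ to a first-order formula while keeping the binding. The naive move is to apply Theorem~\ref{thm:fodefinable} to $Q$ itself. That yields a first-order $\theta(\overline{R},\overline{R}^{\prime})$, but $\theta$ need not be bounded. I would therefore run the argument one level up, at the level of the internal substructure. Consider the auxiliary generalized quantifier $Q^{\ast}(\overline{R},\overline{R}^{\prime}) := \Phi(\overline{R},\overline{R}^{\prime})\wedge \mathbb{V}=\mathscr{U}\overline{R}$, or more simply the formula $\Phi$ evaluated in models whose domain is $\mathscr{U}\overline{R}$.

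By Theorem~\ref{thm:internal}, for $\mathcal{B}\models\schoicewo$ and $A=\mathscr{U}\overline{R}$ (non-empty and containing no constants to worry about, by strong existential import), we have $\mathcal{B}\models\Phi^{A}$ iff $\mathcal{A}\models\Phi$, where $\mathcal{A}$ is the internal substructure, again a model of $\schoicewo$. In every model of $\schoicewo$ satisfying $\mathbb{V}=\mathscr{U}\overline{R}$, $\Phi$ is equivalent to $\Psi$, because $Q$, $\Phi^{\mathscr{U}\overline{R}}$ and $\Psi^{\mathscr{U}\overline{R}}$ agree and the binding is trivial there. So the quantifier $Q^{\ast}(\overline{R},\overline{R}^{\prime}) := \big(\mathbb{V}=\mathscr{U}\overline{R}\rightarrow\Phi\big)$ is $\Delta^1_1$-definable relative to $\schoicewo$. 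The $\Pi^1_1$ form is $\mathbb{V}=\mathscr{U}\overline{R}\rightarrow\Psi$, which is $\Pi^1_1$ since $\mathbb{V}=\mathscr{U}\overline{R}$ is first-order.

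Theorem~\ref{thm:fodefinable} then supplies a first-order $\theta$ with $\schoicewo\vdash Q^{\ast}\leftrightarrow\theta$. Finally, the bounded first-order definition is $\theta^{\mathscr{U}\overline{R}}$. Given $\mathcal{B}$ and $\overline{R},\overline{R}^{\prime}$, I pass to the internal substructure $\mathcal{A}$ on $\mathscr{U}\overline{R}$, replacing $\overline{R}^{\prime}$ by $\mathscr{U}\overline{R}\cap\overline{R}^{\prime}$, which is harmless by Henkin conservativity. In $\mathcal{A}$ we have $\mathbb{V}=\mathscr{U}\overline{R}$, so $\mathcal{A}\models\Phi\leftrightarrow\theta$. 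Theorem~\ref{thm:internal} transfers this to $\mathcal{B}\models\Phi^{\mathscr{U}\overline{R}}\leftrightarrow\theta^{\mathscr{U}\overline{R}}$. By completeness this holds provably in $\schoicewo$.

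The main obstacle I expect is this relativization step, in particular making sure the internal substructure interprets $\preceq$ correctly and that the case $\mathscr{U}\overline{R}=\emptyset$ is excluded by strong existential import (cf. Remark~\ref{rmk:binding:poorly}). If the trick with $\mathbb{V}=\mathscr{U}\overline{R}$ misbehaves, the fallback is to inspect the proof of Theorem~\ref{thm:fodefinable}. That proof presumably uses the definable Skolem functions from $\preceq$ to eliminate the second-order quantifiers, and I would check that this elimination commutes with binding to $\mathscr{U}\overline{R}$. This should hold because the $\preceq$-least witnesses inside $\mathscr{U}\overline{R}$ are definable by formulas bound to $\mathscr{U}\overline{R}$.
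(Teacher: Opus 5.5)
Your argument is correct in outline, but it takes a longer road than the paper and one step needs a small repair.

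\textbf{Comparison with the paper's route.} The paper's proof is a three-step chain:
\begin{enumerate}
\item Theorem~\ref{thm:fefermanesq}(\ref{thm:fefermanesq:3}) gives $\Delta^1_1$-definability of $Q$.
\item Theorem~\ref{thm:fodefinable} then gives a first-order definition $\theta$ of $Q$ itself.
\item Theorem~\ref{thm:fefermanesqfourpre}, whose statement explicitly covers the first-order case, turns this unbounded $\theta$ into a bounded one. It uses conservativity, strong existential import, downward closure under internal substructures, and domain invariance.
\end{enumerate}
So your worry that ``$\theta$ need not be bounded'' is exactly what that theorem handles, and your last paragraph is essentially its proof. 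The auxiliary $Q^{\ast}$ is therefore unnecessary. Your relativization step works verbatim for a first-order definition of $Q$ itself. You just move $Q(\overline{R},\overline{R}^{\prime}\cap\mathscr{U}\overline{R})$ between $\mathcal{B}$ and the internal substructure $\mathcal{A}$ by Henkin domain invariance, instead of via the bounded $\Delta^1_1$ definition. Your detour is still valid: $Q^{\ast}$ is $\Delta^1_1$ because binding to $\mathbb{V}$ is trivial. Its only gain is that you never appeal to domain invariance after Theorem~\ref{thm:fefermanesqfour}. The price is running the bounded machinery twice.

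\textbf{The step that fails as written: the empty field.} Strong existential import does not exclude $\mathscr{U}\overline{R}=\emptyset$. It only makes $Q(\overline{R},\overline{R}^{\prime})$ false there, and the biconditional defining bounded definability must still hold for such $\overline{R}$. But $\theta^{\mathscr{U}\overline{R}}$ need not be false when $\mathscr{U}\overline{R}=\emptyset$. If $\theta$ begins with a first-order universal quantifier, its relativization is vacuously true (Remark~\ref{rmk:binding:poorly}). You also cannot pass to an internal substructure in this case, since Definition~\ref{defn:internal} requires a non-empty domain.

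The fix is the one the paper uses in the proof of Theorem~\ref{thm:fefermanesqfourpre}: relativize $\theta\wedge\exists x\,x=x$ rather than $\theta$. This is equivalent to $\theta$ in every model. Its relativization contains the conjunct $\exists x\,(\mathscr{U}\overline{R})x$, so it is false on an empty field and agrees with $Q$ there. Since the signature of $\schoicewo$ has no individual constants, the second clause of strong existential import plays no role.
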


We prove these two theorems in \S\ref{sec:fodefinable}. However, note that Theorems~\ref{thm:fodefinable}-\ref{thm:fodefinabletwo} pertain to a specific theory, namely $\schoicewo$, whereas Theorems~\ref{thm:fefermanesq}, \ref{thm:fefermanesqfour} are about general background theories $T$ of second-order logic.

\section{Quantifiers defined in terms of cardinality}\label{sec:cardinality}

Finally, we make good on an obligation to show that Examples~\ref{ex:most}, \ref{ex:more} of $\mathsf{Most}$ and $\mathsf{More\mbox{-}than}$ fit into the framework described in this paper. We have the following, where $\scompfin$ was defined at the end of Definition~\ref{defn:two}:

\begin{restatable}{thm}{thmmostdefinabel}\label{thmmostdefinabel} (A theory for the paradigmatic cardinality-defined quantifiers).

\begin{enumerate}[leftmargin=*]
    \item\label{thmmostdefinabel:1} $\mathsf{Most}$ and $\mathsf{More\mbox{-}than}$ are bounded $\Delta^1_1$-definable relative to $\scompfin$.
    \item\label{thmmostdefinabel:2} $\mathsf{Most}$ and $\mathsf{More\mbox{-}than}$ are Henkin domain independent and Henkin conservative relative to $\scompfin$.
\end{enumerate}

\end{restatable}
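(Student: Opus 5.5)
The plan is to establish (\ref{thmmostdefinabel:1}) first and then derive (\ref{thmmostdefinabel:2}) from it using Corollary~\ref{cor:stable}.

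For (\ref{thmmostdefinabel:1}), the $\Sigma^1_1$ side is easy. $\mathsf{Most}(F,G)$ says $\left|F\cap G\right|>\left|F\setminus G\right|$, which is the conjunction of ``there is an injection $F\setminus G\to F\cap G$'' and ``there is no injection $F\cap G\to F\setminus G$.'' The first conjunct is bounded $\Sigma^1_1$ with quantifiers bound to $F=\mathscr{U}F$. The second is bounded $\Pi^1_1$, so as written the whole formula is only a Boolean combination.

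The key step is a \emph{comparability lemma} in $\scompfin$: for all $X,Y$, either $\left|X\right|\leq\left|Y\right|$ or $\left|Y\right|\leq\left|X\right|$. It should be provable in the bounded form, with quantifiers bound to $X\cup Y$. I would prove it using the well-order $\preceq$. First, use $\Sigma^1_1$-comprehension to form the set of pairs $(x,y)\in X\times Y$ such that the $\preceq$-initial segment of $X$ below $x$ and the $\preceq$-initial segment of $Y$ below $y$ are in bijection; the existence of such a bijection is a $\Sigma^1_1$ condition. Second, show this set is the graph of an order-preserving partial injection $h$, using the well-ordering axiom applied to the least point of disagreement. Third, show that $h$ is total on $X$ or onto $Y$, again by taking least counterexamples.

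Dedekind finiteness is used in the second step. Uniqueness of the partner $y$ needs that an initial segment is not equinumerous with a proper initial segment of itself, and this follows because the universe, and hence every subset, is Dedekind finite. The same fact gives that $\left|Y\right|\leq\left|X\right|$ and $\left|X\right|\leq\left|Y\right|$ together yield a bijection, so no appeal to Schr\"oder--Bernstein is needed.

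With comparability in hand, ``not $\left|F\cap G\right|\leq\left|F\setminus G\right|$'' implies $\left|F\setminus G\right|\leq\left|F\cap G\right|$. So $\mathsf{Most}(F,G)$ is equivalent to the bounded $\Pi^1_1$ formula ``no injection $F\cap G\to F\setminus G$.''

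For the $\Sigma^1_1$ form, I use ``there is an injection $f:F\setminus G\to F\cap G$ that is not surjective.'' One direction: given such an $f$ together with a bijection back, composing would yield a Dedekind-infinite subset, contradicting finiteness. The other direction: an injection exists and no bijection exists, so the injection is not onto. This is bounded $\Sigma^1_1$, bound to $F$.

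$\mathsf{More\mbox{-}than}(F,G,H)$ is handled identically, with the splitting between $\overline{R}=(F,G)$ and $\overline{R}^{\prime}=H$ and all quantifiers bound to $F\cup G=\mathscr{U}\overline{R}$.

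For (\ref{thmmostdefinabel:2}), Henkin domain independence follows immediately from (\ref{thmmostdefinabel:1}) and Corollary~\ref{cor:stable}(\ref{cor:stable:3}). Henkin conservativity is a direct derivation in $\aca$, since $F\cap(F\cap G)=F\cap G$ and $F\setminus(F\cap G)=F\setminus G$ (and similarly for $(F\cup G)\cap H$). The cardinality formulas are then literally about the same sets, so extensionality gives the biconditional.

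The main obstacle is the comparability lemma: arranging that the construction of $h$ uses only $\Sigma^1_1$-comprehension, and verifying the least-counterexample inductions inside a Henkin model where only the well-ordering axiom, not full induction, is available.
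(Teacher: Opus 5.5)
Your proposal is correct, and its outer architecture is the paper's. You reduce (\ref{thmmostdefinabel:1}) to bounded $\Sigma^1_1$ and $\Pi^1_1$ characterizations of $\left|X\right|<\left|Y\right|$ in $\scompfin$. You then get Henkin domain independence from Corollary~\ref{cor:stable}(\ref{cor:stable:3}) and Henkin conservativity from the set identities. Your $\Sigma^1_1$ form, a non-surjective injection, is exactly Proposition~\ref{prop:t0proves4}(\ref{prop:t0proves42}).

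The difference is in the key lemma. The paper first proves a normal form (Proposition~\ref{prop:t0proves}): every $X$ is either $\mathbb{V}$ or order-isomorphic to some $[0,c)$. This comes from a chain of consequences of Dedekind finiteness: nonempty sets have maxima, every element is $0$ or a successor, an induction principle holds, and a $\Sigma^1_1$-comprehension induction produces $X\upharpoonright a\cong[0,c)$. From this the paper derives the exclusive dichotomy of Proposition~\ref{prop:t0proves3}, and it takes ``no surjection $X\to Y$'' as its $\Pi^1_1$ form.

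You instead compare $X$ and $Y$ directly by matching initial segments. This is the classical comparability argument for well-orders, and it uses one instance of $\Sigma^1_1$-comprehension. Your $\Pi^1_1$ form, ``no injection $Y\to X$'', is then just the second conjunct of the definition of $<$. Your route is shorter and isolates where finiteness is really needed. The paper's route yields canonical representatives $[0,c)$, a kind of internal cardinal, from which all three characterizations in Proposition~\ref{prop:t0proves4} follow uniformly.

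One caution about your third step. Suppose $H$ records mere equinumerosity of $X\upharpoonright x$ and $Y\upharpoonright y$. Then the domain of $h$ is not obviously downward closed, so ``least counterexamples'' alone does not finish the argument. Let $x_0,y_0$ be least outside $\mathrm{dom}(h)$ and $\mathrm{ran}(h)$. You must still compare $h[X\upharpoonright x_0]$ with $Y\upharpoonright y_0$. If they differ, use the order-preservation of $h$; in this version it comes from Dedekind finiteness, not from a least-disagreement argument. It yields some $(d,e)\in H$ such that $X\upharpoonright d$ or $Y\upharpoonright e$ is equinumerous with a proper subset of itself, which is a contradiction.

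It is cleaner to let $H$ record order isomorphisms instead. Then downward closure is immediate, and your least-point-of-disagreement argument gives uniqueness. Comparability then follows from $\Sigma^1_1$-comprehension and the well-ordering alone. Dedekind finiteness is needed only to justify the $\Sigma^1_1$ characterization.
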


We prove this in \S\ref{sec:cardinalityqs}. Obviously (\ref{thmmostdefinabel:1}) follows from (\ref{thmmostdefinabel:2}) and Theorems~\ref{thm:fefermanesq}, \ref{thm:fefermanesqfour}. But to our knowledge, there is no easy way to verify the Henkin domain independence in (\ref{thmmostdefinabel:2}) without going through the specific definitions produced in the proof of (\ref{thmmostdefinabel:1}) and then appealing to Theorem~\ref{thm:stable}.

It is natural to ask whether the restriction to Dedekind finite universes in Theorem~\ref{thmmostdefinabel} is necessary. Let us consider the case of $\mathsf{More\mbox{-}than}$ (cf. Example~\ref{ex:more}). Note that one has $\left|X\right|>\left|Y\right|$ iff $\mathsf{More\mbox{-}than}(X,Y,\mathbb{V})$. Hence if  $\mathsf{More\mbox{-}than}$ is bounded $\Delta^1_1$-definable relative to a theory $T$, then $\left|X\right|>\left|Y\right|$ is $\Delta^1_1$-definable relative to $T$, by quantifiers which are bound to $X\cup Y$ (in the sense of Definition~\ref{defn:binding}(\ref{defn:binding2})).

But, one can show using the method of forcing that this is false for $\mathsf{ACA}_0^-$.\footnote{In the setting of set theory, one can study models of $\mathsf{ACA}_0^-$ which come equipped with a membership relation on individuals which satisfies the appropriate second-order axioms of set theory. This axiom system is usually known as von Neumann-G\"odel-Bernays set theory or $\mathsf{NBG}$ for short (cf. \cite[Chapter 4]{Mendelson1997-pp}).} For, using this method, one can produce two Henkin models $\mathcal{A}, \mathcal{B}$ of $\mathsf{ACA}_0^-$ such that $\mathcal{A}$ is a substructure of $\mathcal{B}$ and one can produce two $X,Y$ in $S_1[A]$ such that $\mathcal{A}\models \left|X\right|< \left|Y\right|$ but $\mathcal{B}\models \neg ( \left|X\right|< \left|Y\right|)$. Namely, $\mathcal{A}$ is the countable transitive ground model together with its first-order definable subsets, and $\mathcal{B}$ is a generic extension together with its first-order definable subsets; one further chooses $X=\omega$ and $Y=\kappa$ for some uncountable cardinal $\kappa$ in $\mathcal{A}$ which is collapsed in $\mathcal{B}$. If $\left|X\right|<\left|Y\right|$ was $\Delta^1_1$-definable relative to $\mathsf{ACA}_0^-$ with quantifiers bound to $X\cup Y$, then we would contradict Theorem~\ref{thm:stable}.

The restriction to the finite in Theorem~\ref{thmmostdefinabel} does not seem problematic, as far as empirical coverage goes. For, our judgments about $\mathsf{Most}$ and $\mathsf{More\mbox{-}than}$ are primarily informed by our experience with finite collections of e.g. people and other middle-sized objects. But the restriction to the finite does suggest an interesting test case for someone who wanted to deny that generalized quantifiers satisfy Henkin domain independence. For, by the above, $\mathsf{More\mbox{-}than}$ is not Henkin domain independent relative to $\mathsf{ACA}_0^-$. But it is domain independent on standard models by Corollary~\ref{cor:stable:standard}. If one thought that $\mathsf{More\mbox{-}than}$ was a generalized quantifier regardless of the size of the ambient universe, then this would be a reason to reject that all generalized quantifiers satisfy Henkin domain independence. Pursued in this direction, the discussion would start blending with discussions in philosophy of mathematics about whether and to what extent models produced by the method of forcing are or are not good candidates for intended models of set theory.\footnote{Categoricity arguments have been used, not uncontroversially, to answer ``no they are not''; see \cite[Chapter 11]{Button2018-ux} for references to, and discussion of, Kreisel, McGee, and Martin among others. For nuanced versions of the answer ``yes they are,'' see Hamkins \cite{Hamkins2012-ld}, although some care would have to exercised to situate his discussion of the models produced by forcing into the discussion of second-order logic.} But those who outright reject the Henkin domain independence of generalized quantifiers would be in need of another explanation of why natural language generalized quantifiers realize so few of the theoretically possible options for collections of subsets of the underlying domain.

\section{Applications to upper bounds on complexity}\label{sec:applications}

Finally, we mention an immediate corollary of Theorem~\ref{thm:fefermanesq}(\ref{thm:fefermanesq:3}) (there are similar corollaries for (\ref{thm:fefermanesq:1})-(\ref{thm:fefermanesq:2})). (Recall the axioms for well-orders in Definition~\ref{defn:two}).
\begin{cor}\label{cor:logicapps} (Upper bounds on complexity).
\begin{enumerate}[leftmargin=*]
    \item\label{cor:logicapps:2} Suppose that $T$ is a theory extending $\schoice$ containing an axiom saying that a binary constant symbol is a well-order of the domain. Suppose $T$ is true on all standard models with first-order domain which is countably infinite and with the constant symbol interpreted as a well-order. Suppose that $Q$ is a generalized quantifier which is Henkin domain independent with respect to $T$. For each countably infinite set $A$, consider the associated standard model~$\mathcal{A}$ of $T$ along with a well-order. For each $n\geq 1$, the set $P(A^n)$ is naturally a Polish space by identifying it with the countable product $\{0,1\}^{A^n}$, and similarly for finite products of these. Then the quantifier $Q$ is Borel on these models, under this identification.
    \item\label{cor:logicapps:1} Suppose that $T$ is a theory extending  $\schoice$ containing an axiom saying that a binary constant symbol is a well-order of the domain. Suppose that $T$ is true on all standard models with finite first-order domain $\{0, \ldots, m\}$ for $m\geq 1$ and with the constant symbol interpreted as the usual ordering relation on the natural numbers. Suppose that $Q$ is a generalized quantifier which is Henkin domain independent with respect to $T$. For $m\geq 1$, consider the finite standard models $\mathcal{A}_m$ of $T$ with domain $\{0, \ldots, m\}$ and with the constant symbol interpreted as the usual ordering relation for the natural numbers. Then the problem of determining membership in $Q$ on these models is $\mathsf{NP}\cap \mathsf{co\mbox{-}NP}$.
\end{enumerate}
\end{cor}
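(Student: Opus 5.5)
The plan is to apply Theorem~\ref{thm:fefermanesq}(\ref{thm:fefermanesq:3}) and then pass the resulting definitions to the specific standard models, where classical descriptive-set-theoretic and complexity results apply. Since $Q$ is Henkin domain independent relative to $T$ and $T$ extends $\schoice$, Theorem~\ref{thm:fefermanesq}(\ref{thm:fefermanesq:3}) gives a $\Sigma^1_1$-formula $\Phi(\overline{R})$ and a $\Pi^1_1$-formula $\Psi(\overline{R})$ such that $T$ proves $\forall\,\overline{R}\,(Q(\overline{R})\leftrightarrow\Phi(\overline{R}))$ and $\forall\,\overline{R}\,(Q(\overline{R})\leftrightarrow\Psi(\overline{R}))$. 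By hypothesis $T$ holds on the relevant standard models, either the countably infinite ones with a well-order or the finite ones $\mathcal{A}_m$. By soundness, on each such model the set of $\overline{R}$ satisfying $Q$ therefore coincides with the set defined by $\Phi$ and with the set defined by $\Psi$. The signature may contain the well-order and other constants, but these are fixed parameters on a given model.

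For (\ref{cor:logicapps:2}), fix a countably infinite $A$ and identify $P(A^n)$ with $\{0,1\}^{A^n}$. Write $\Phi$ as $\exists\,S_1,\ldots,S_k\,\varphi(\overline{R},\overline{S})$ with $\varphi$ first-order. On a standard model the second-order quantifiers range over the whole product space, so the set defined by $\Phi$ is the projection of the set of tuples $(\overline{R},\overline{S})$ satisfying $\varphi$. That set is Borel: atomic formulas $R\overline{a}$ are clopen conditions, and first-order quantifiers over the countable set $A$ become countable unions and intersections. Hence the set defined by $\Phi$ is analytic. Dually, the set defined by $\Psi$ is co-analytic. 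Since they are the same set, Souslin's Theorem shows it is Borel. The only care needed is that the fixed interpretation of the well-order and of any other signature constants enters as a parameter, which does not affect Borelness.

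For (\ref{cor:logicapps:1}), on $\mathcal{A}_m$ the input $\overline{R}$ is encoded by bit strings of length polynomial in $m$. The formula $\Phi$ asks for the existence of relations $\overline{S}$, which are certificates of size polynomial in $m$ for fixed arities, and the first-order matrix can be checked in polynomial time by brute force over $\{0,\ldots,m\}$. So membership in $Q$ is in $\mathsf{NP}$; this is the easy direction of Fagin's Theorem, and the built-in order makes the encoding canonical. The same argument applied to $\neg\Psi$, which is $\Sigma^1_1$, puts the complement in $\mathsf{NP}$, so membership is in $\mathsf{co\mbox{-}NP}$ as well.

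The main step needing attention is ensuring that the equivalences proved in $T$ actually transfer to these standard models, which holds precisely because $T$ is assumed true on them. A secondary point is that on the finite side the input size is measured in $m$ rather than in the length of $\overline{R}$; the two are polynomially related for fixed type $\overline{n}$.
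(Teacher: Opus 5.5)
Your proposal is correct and follows the paper's proof. Both apply Theorem~\ref{thm:fefermanesq}(\ref{thm:fefermanesq:3}) to obtain $\Sigma^1_1$ and $\Pi^1_1$ definitions, transfer these to the relevant standard models (where $T$ is assumed true), and then invoke Souslin's Theorem (a set that is both analytic and co-analytic is Borel) for (\ref{cor:logicapps:2}) and the easy direction of Fagin's Theorem for (\ref{cor:logicapps:1}). You simply spell out details the paper leaves to the cited theorems: projections of Borel sets are analytic, certificates have polynomial size, and the signature constants act as fixed parameters.
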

\begin{proof}
For (\ref{cor:logicapps:2}), by Theorem~\ref{thm:fefermanesq}(\ref{thm:fefermanesq:3}), this follows from Souslin's Theorem, which says that testing whether $\overline{R}$ is in a $\Delta^1_1$-definable set over a Polish space is Borel.\footnote{\cite[88]{Kechris1995-hr}.}

For (\ref{cor:logicapps:1}), by Theorem~\ref{thm:fefermanesq}(\ref{thm:fefermanesq:3}), this follows from Fagin's Theorem, which says that testing whether $\overline{R}$ satisfies a $\Delta^1_1$-definable problem over $\mathcal{A}_m$ is  $\mathsf{NP}\cap \mathsf{co\mbox{-}NP}$.\footnote{\cite[115]{Immerman1996aa}.}.
\end{proof}

This Corollary leverages how predicatively definable sets occur naturally throughout mathematics and computation and are tied to core notions like the Borel sets and the traditional complexity classes. Let us discuss each of these in turn.

Borel sets are the events most commonly used in probability, measure-theoretic mathematics, and their many applications. The subject-matter of descriptive set theory is definability in Polish spaces, where the Borel sets form the lower end of a hierarchy of definability.\footnote{Cf. \cite{Kechris1995-hr}.} Corollary~\ref{cor:logicapps}(\ref{cor:logicapps:2}) says that the quantifiers at issue satisfy the usual condition for the theorems of probability theory and measure-theoretic mathematics to apply to them, and that they are not a source of set-theoretic pathology. However, the applicability of this is limited. As we saw in \S\ref{sec:cardinality}, the theory $\mathsf{ACA}_0^-$, which is true on all standard models with first-order domain which is countably infinite, is such that the quantifier $\mathsf{More\mbox{-}than}$ is not Henkin domain independent relative to $\mathsf{ACA}_0^{-}$.

Computational complexity theory studies feasible models of computation, which admit computations that are a strict subclass of those allowed under the more traditional Turing models of computation.\footnote{Cf. \cite{Immerman1996aa}.} Corollary~\ref{cor:logicapps}(\ref{cor:logicapps:1}) identifies an upper bound on the computational complexity of the quantifier. If the motivations for domain independence extend to Henkin domain independence, then Corollary~\ref{cor:logicapps}(\ref{cor:logicapps:1}) would predict that natural language generalized quantifiers are not excessively infeasible to compute. Finally, in contrast to the conclusion of the previous paragraph, in \S\ref{sec:cardinality} we identified the theory $\scompfin$ which satisfies the hypotheses of Corollary~\ref{cor:logicapps}(\ref{cor:logicapps:1}) and relative to which $\mathsf{Most}$ and $\mathsf{More\mbox{-}than}$ are Henkin domain invariant and Henkin conservative. Hence $\mathsf{Most}$ and $\mathsf{More\mbox{-}than}$ are $\mathsf{NP}\cap \mathsf{co\mbox{-}NP}$ on the finite models described in Corollary~\ref{cor:logicapps}(\ref{cor:logicapps:1}).

Corollary~\ref{cor:logicapps}(\ref{cor:logicapps:1}) is rather complementary to Szymanik's work, which puts the upper bound at $\mathsf{NP}$ rather than $\mathsf{NP}\cap \mathsf{co\mbox{-}NP}$.\footnote{\cite[14]{Szymanik2016-ue}.} 
This is not lowered due to branching quantifiers and Ramsey quantifiers (cf. Example~\ref{ex:branchmost}).\footnote{\cite[\S{7.3},\S{7.5}]{Szymanik2016-ue}.} For someone who thought that branching quantifiers and Ramsey quantifiers were core rather than periphery generalized quantifiers, one would rather focus on the analogue of Corollary~\ref{cor:logicapps}(\ref{cor:logicapps:1})  which is centered around Fagin's Theorem and Theorem~\ref{thm:fefermanesq}(\ref{thm:fefermanesq:1}) rather than Theorem~\ref{thm:fefermanesq}(\ref{thm:fefermanesq:3}).

We now turn to the more technical part of the paper, in which we prove the various theorems in turn.

\section{Proofs of Theorem~\ref{thm:stable}, Theorem~\ref{thm:internal}}\label{sec:stable}

In this theorem, recall that the binding concepts and notation was defined in Definition~\ref{defn:binding}.

\thmstable*

\begin{proof}

By Proposition~\ref{prop:absolutenboolean} it suffices to consider the case where $X$ is a unary second-order variable and where $\varphi(\overline{x},\overline{R})$ is in the signature with just membership.

We begin by showing the biconditional for first-order $\varphi(\overline{x},\overline{R})$, by induction on complexity of $\varphi(\overline{x},\overline{R})$.

As a base case, suppose that that $\varphi$ is the atomic $R\overline{y}$, where $\overline{y}$ is some subtuple of $\overline{x}$. Since this atomic $\varphi$ does not have any quantifiers, one has that $\varphi^X$ is the same as $\varphi$. Since $\mathcal{A}, \mathcal{B}$ are Henkin models, one has that both $\mathcal{A}, \mathcal{B}$ interpret $R\overline{y}$ as the membership of $\overline{y}$ in $R$, and so we are done. The same is true if some of $R$ and the entries in $\overline{y}$ are constants, since this was built into the definition of substructure (cf. Definition~\ref{defn:substructure}).

The inductive steps for the propositional connectives are clear. 

We do the inductive step for the first-order existential quantifier. Suppose it holds for $\varphi(\overline{x},y,\overline{R})$; we show it holds for $\exists \; y \; \varphi(\overline{x},y,\overline{R})$. 
\begin{itemize}[leftmargin=*]
\item If $\mathcal{A}\models (\exists \; y \; \varphi(\overline{x},y,\overline{R}))^X$, then there is $y$ in $X$ such that $\mathcal{A}\models \varphi(\overline{x},y,\overline{R})^X$. Then $y$ is in $X\subseteq A\subseteq B$ and hence $\mathcal{B}\models Xy$  and by induction hypothesis $\mathcal{B}\models \varphi^X(\overline{x},y,\overline{R})$ and so $\mathcal{B}\models \exists \; y \; Xy \wedge \varphi^X(\overline{x},y,\overline{R})$ and so $\mathcal{B}\models (\exists \; y \; \varphi(\overline{x},y,\overline{R}))^X$.
\item If $\mathcal{B}\models (\exists \; y \; \varphi(\overline{x},y,\overline{R}))^X$, then $\mathcal{B}\models \exists \; y \; Xy \wedge \varphi^X(\overline{x},y,\overline{R})$. Then there is $y$ in $X$, which is a subset of $A$, such that $\mathcal{B}\models \varphi^X(\overline{x},y,\overline{R})$. By induction hypothesis, $\mathcal{A}\models \varphi^X(\overline{x},y,\overline{R})$. Then $\mathcal{A}\models (\exists \; y \; \varphi(\overline{x},y,\overline{R}))^X$.
\end{itemize}

To show the conditional~(\ref{eqn:stable:conditional}) for a $\Sigma^1_1$-formula, suppose for simplicity that it is the case of $\exists \; S \; \varphi(\overline{x},y,\overline{R},S)$, where $S$ is unary and $\varphi(\overline{x},y,\overline{R},S)$ is first-order. Suppose that $\mathcal{A}\models (\exists \; S \; \varphi(\overline{x},y,\overline{R},S))^X$. Then $\mathcal{A}\models \exists \; S \; S\subseteq X \; \varphi^X(\overline{x},y,\overline{R},S)$. Then by Proposition~\ref{prop:absolutenboolean}, there is $S$ in $S_1[A]\subseteq S_1[B]$ such that $S\subseteq X$ and $\mathcal{A}\models \varphi^X(\overline{x},y,\overline{R},S)$. By the biconditional for $\varphi(\overline{x},y,\overline{R},S)$, one has $\mathcal{B}\models  \varphi^X(\overline{x},y,\overline{R},S)$. Then by Proposition~\ref{prop:absolutenboolean} again, one has $\mathcal{B}\models \exists \; S \; S\subseteq X \; \varphi^X(\overline{x},y,\overline{R},S)$ and hence $\mathcal{B}\models (\exists \; S \; \varphi(\overline{x},y,\overline{R},S))^X$.

The reverse conditional~(\ref{eqn:stable:conditional}) for a $\Pi^1_1$-formula follows from the $\Sigma^1_1$ case by contraposition and DeMorgan.

\end{proof}

\thminternal*

\begin{proof}
Since $\mathcal{A}\models \varphi(\overline{x},\overline{R})$ iff $\mathcal{A}\models \varphi^A(\overline{x},\overline{R})$ the conclusion for first-order $\varphi(\overline{x},\overline{R})$ follows from the previous Theorem. Suppose it holds for $\varphi(\overline{x},\overline{R},S)$; we show it holds for $\exists \; S \; \varphi(\overline{x},\overline{R},S)$, where for the sake of simplicity $S$ is unary.
\begin{itemize}[leftmargin=*]
    \item Suppose that $\mathcal{A}\models \exists \; S \; \varphi(\overline{x},y,\overline{R},S)$. Then there is $S$ in $S_1[A]\subseteq S_1[B]$ such that $\mathcal{A}\models \varphi(\overline{x},y,\overline{R},S)$. Then $\mathcal{A}\models S\subseteq A$ and by two applications of Proposition~\ref{prop:absolutenboolean} we have $\mathcal{B}\models S\subseteq A$. Further, by induction hypothesis $\mathcal{B}\models \varphi^A(\overline{x},y,\overline{R},S)$. Hence $\mathcal{B}\models (\exists \; S \; \varphi(\overline{x},\overline{R},S))^A$.
    \item Suppose that $\mathcal{B}\models (\exists \; S \; \varphi(\overline{x},\overline{R},S))^A$. Then $\mathcal{B}\models \exists \; S\subseteq A \wedge \varphi^A(\overline{x},\overline{R},S)$. By Proposition~\ref{prop:absolutenboolean}, there is $S$ in $S_1[B]$ such that $S\subseteq A$ and $\mathcal{B}\models \varphi^A(\overline{x},\overline{R},S)$. Since $\mathcal{A}$ is an internal substructure of $\mathcal{B}$, one has that $S_1[A]=S_1[B]\cap P(A)$, and hence $S$ is in $S_1[A]$. And by induction hypothesis we have $\mathcal{A}\models \varphi(\overline{x},\overline{R},S)$. Then $\mathcal{A}\models \exists \; S \; \varphi(\overline{x},y,\overline{R},S)$.
\end{itemize}
\end{proof}

\section{Henkin models and faithfulness }\label{sec:henkinish}

Henkin models as defined in \S\ref{sec:intro} are required to interpret membership absolutely. It is natural to view them as models of many-sorted first-order logic, and hence to view $A$ as the interpretation of the sort associated to objects and $S_n[A]$ as the interpretation of the sort associated to $n$-ary second-order relations. In order to be able to apply the full metatheory of many-sorted first-order logic,\footnote{\cite[Chapter VI~ff]{Manzano1996-ff}, \cite[\S{4.3}]{Enderton2001-bm}, \cite[28 ff]{Marker2006-rd}, \cite[5]{Tent2012-jw}.} one needs (i) to have the freedom to temporarily relax the assumption that membership is interpreted absolutely, since e.g. the models coming from compactness will not in general have this property; and one needs simultaneously (ii) to know how to reinstate it by passing to a related model, since we want eventually to restate things in terms of Henkin models.

Hence we define:
\begin{defn} (Pre-Henkin model).

A \emph{pre-Henkin model} in a signature is a many-sorted structure of the form:
\begin{equation}\label{eqn:model}
\mathcal{A}=(A,S_1[A], S_2[A], \ldots; E_1^{\mathcal{A}}, E_2^{\mathcal{A}}, \ldots)
\end{equation}
where $A$ is a non-empty set, and where
\begin{enumerate}[leftmargin=*]
    \item \label{eqn:model:1} For all $n\geq 1$, the interpretation $E_n^{\mathcal{A}}$ of the membership symbol $E_n$ has sort $S_n[A]\times A^n$ and in the object-language we suppress $E_n$ and write $R\overline{a}$ instead of $E_n(R,a_1, \ldots, a_n)$.
    \item \label{eqn:model:2} For all $n\geq 1$ the structure $\mathcal{A}$ satisfies extensionality: 
    \begin{equation}
\forall \; R \; \forall \; S \; \bigg(R=S\leftrightarrow \forall \; \overline{x} \; (R\overline{x}\leftrightarrow S\overline{x})\bigg)
\end{equation}
    where $R, S$ are $n$-ary variables and the first-order tuple $\overline{x}$ has arity $n$.
    \item \label{eqn:model:3} The model $\mathcal{A}$ satisfies the comprehension schema~(\ref{eqn:compschema}) for first-order formulas in the signature.
    \item \label{eqn:model:4} For the interpretation of the signature, we have:
\begin{enumerate}[leftmargin=*]
    \item\label{eqn:model:4:a} For all individual constant symbols $c$ in the signature, it is interpreted as an element $c^{\mathcal{A}}$ of $A$.
    \item\label{eqn:model:4:b} For all $n\geq 1$ and all $n$-ary relation constant symbols $C$ in the signature, it is interpreted as an element $C^{\mathcal{A}}$ of $S_n[A]$.
    \item\label{eqn:model:4:c} For all $n\geq 1$ and all $n$-ary relation symbols $C$ in the signature, it is interpreted as a subset $C^{\mathcal{A}}$ of $A^n$.
\end{enumerate}    
\end{enumerate}
\end{defn}

\noindent In (\ref{eqn:model:1}), the idea is that $E_n$ interprets the membership relation between the domain $S_n[A]$ reserved for the $n$-ary second-order objects and $n$-tuples of elements from the first-order domain $A$. Since we only focus on models which satisfy first-order comprehension, and since the minimal set-theoretic apparatus described in \S\ref{sec:henkin} is all first-order definable, we may regard ourselves as working in the signature consisting just of membership symbols from (\ref{eqn:model:1}) and the symbols from (\ref{eqn:model:4}).

In \S\ref{sec:henkin} we only had symbols corresponding to (\ref{eqn:model:4:a}), (\ref{eqn:model:4:c}). It is useful to include (\ref{eqn:model:4:b}) since in the setting of many-sorted first-order logic, they will act differently under morphisms. For instance, if $f:\mathcal{A}\rightarrow \mathcal{B}$ is an embedding of pre-Henkin structures, then for a binary relation constant symbol $C_1$ in the sense of (\ref{eqn:model:4:b}), the standard definition of an embedding requires that (i)~$f(C^{\mathcal{A}}_1)=C^{\mathcal{B}}_1$, while for a binary relation symbol $C_2$ in the sense of (\ref{eqn:model:4:c}), the standard definition of an embedding requires that (ii)~$\mathcal{A}\models C_2 xy $ iff $\mathcal{B}\models C_2 f(x) f(y)$ for all $x,y$ in $A$. In the case where $\mathcal{A},\mathcal{B}$ are Henkin models and $f:\mathcal{A}\rightarrow \mathcal{B}$ is the inclusion map of a smaller Henkin model~$\mathcal{A}$ into a larger Henkin model $\mathcal{B}$, this is the difference between (i)~$C^{\mathcal{A}}_1=C^{\mathcal{B}}_1$ and (ii)~$C_2^{\mathcal{A}}\cap A^2= C_2^{\mathcal{B}}$. Case (ii) is what one wants when one considers e.g. a larger linear order $\mathcal{B}$ and its restriction to a smaller linear order~$\mathcal{A}$. But in model theory it is of course commonplace in proofs to name elements of the domains with fresh constant symbols, and for this one wants to be in case~(i). Since we are doing the proofs in these sections, we need to include (\ref{eqn:model:4:b}).

Note that in a signature with $n$-ary relation symbols $C$ as in (\ref{eqn:model:4:c}), there are three kinds of atomics: (I)~equalities, (II)~atomics coming from the membership symbols from (\ref{eqn:model:1}), and (III)~atomics coming from the $n$-ary constant relation symbols from~(\ref{eqn:model:4}). By extensionality (\ref{eqn:model:2}), all equalities on second-order entities are definable in terms of membership. By (\ref{eqn:model:3}), every $n$-ary constant relation symbol determines an element of $S_n[A]$, and so anything from~(III) is equivalent to something from~(II). Hence, in a fixed pre-Henkin model, as well as a fixed Henkin model, we can assume without loss of generality that the atomics are: (I$^{\prime}$)~equalities between individuals, (II$^{\prime}$)~atomics coming from the membership symbol.

Note that being a pre-Henkin model is preserved under isomorphism.

Having defined this more general notion, we can then specify the notion defined in \S\ref{sec:intro} as a special case:
\begin{defn}\label{defn:henkin} (Henkin models as certain kinds of pre-Henkin models).

A pre-Henkin model $\mathcal{A}$ is \emph{a Henkin model} if both of the following hold:
\begin{enumerate}[leftmargin=*]
    \item \label{defn:henkin:1} For each $n\geq 1$ one has $S_n[A]\subseteq P(A^n)$, where the latter denotes the powerset of the $n$-th Cartesian power of $A$.
    \item \label{defn:henkin:2} $E_n^{\mathcal{A}}$ is interpreted as set-theoretic membership in the metatheory, that is, $E_n^{\mathcal{A}}=\{(R, \overline{a})\in S_n[A]\times A^n: \overline{a}\in R\}$.
\end{enumerate}
\end{defn}

One has the following elementary observation:
\begin{prop}\label{prop:iso} (Relation between pre-Henkin models and Henkin models).

Every pre-Henkin model is isomorphic to a Henkin model.
\end{prop}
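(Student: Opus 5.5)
We prove Proposition~\ref{prop:iso} by a Mostowski-style collapse of the second-order sorts onto their extensions. Given a pre-Henkin model $\mathcal{A}=(A,S_1[A],S_2[A],\ldots;E_1^{\mathcal{A}},E_2^{\mathcal{A}},\ldots)$, keep the first-order domain $A$ fixed. For each $n\geq 1$ and each $R\in S_n[A]$, define its extension
\begin{equation*}
f_n(R)=\{\overline{a}\in A^n : (R,\overline{a})\in E_n^{\mathcal{A}}\}.
\end{equation*}
Let $\mathcal{B}$ have first-order domain $B=A$ and $S_n[B]=\{f_n(R): R\in S_n[A]\}$. Membership $E_n^{\mathcal{B}}$ is genuine set-theoretic membership. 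Individual constants are interpreted as in $\mathcal{A}$, relation constant symbols of kind (\ref{eqn:model:4:b}) by $C^{\mathcal{B}}=f_n(C^{\mathcal{A}})$, and relation symbols of kind (\ref{eqn:model:4:c}) by $C^{\mathcal{B}}=C^{\mathcal{A}}$. The candidate isomorphism is the identity on $A$ together with the maps $f_n$ on the second-order sorts.

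The first step is to check that each $f_n$ is a bijection from $S_n[A]$ onto $S_n[B]$. Surjectivity holds by definition. Injectivity is exactly where extensionality (\ref{eqn:model:2}) in $\mathcal{A}$ is used: if $f_n(R)=f_n(S)$, then $\mathcal{A}\models\forall\,\overline{x}\,(R\overline{x}\leftrightarrow S\overline{x})$, and hence $R=S$.

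The second step is to verify that the map preserves and reflects every atomic formula. Equality between individuals is preserved because the map is the identity on $A$. Membership is preserved because $(R,\overline{a})\in E_n^{\mathcal{A}}$ iff $\overline{a}\in f_n(R)$ iff $(f_n(R),\overline{a})\in E_n^{\mathcal{B}}$. Equality of second-order objects is handled by injectivity, and the constants are matched by construction. So the map is an isomorphism of many-sorted structures.

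It then follows that $\mathcal{B}$ satisfies extensionality and first-order comprehension, since these are sentences true in $\mathcal{A}$ and isomorphisms preserve truth. Also $S_n[B]\subseteq P(A^n)$ and membership in $\mathcal{B}$ is absolute, so $\mathcal{B}$ is a Henkin model in the sense of Definition~\ref{defn:henkin}.

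The argument is routine. The only point requiring care, and the one I expect to be the main obstacle, is the bookkeeping for the constant symbols of kind (\ref{eqn:model:4:b}) versus (\ref{eqn:model:4:c}): the former must be transported along $f_n$, while the latter stay as subsets of $A^n$. Beyond that, the one substantive fact is that extensionality secures the injectivity of $f_n$.
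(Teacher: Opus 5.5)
Your proposal is correct and follows the paper's proof essentially verbatim. Like the paper, you keep $A$, send each $R\in S_n[A]$ to its extension, interpret membership absolutely, transport the constants along this map, and obtain extensionality and comprehension in $\mathcal{B}$ from the isomorphism. Your explicit remark that injectivity of the map comes from extensionality in $\mathcal{A}$ is a detail the paper leaves implicit when it simply calls $f$ a bijection.
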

\begin{proof}
Suppose $\mathcal{A}$ a pre-Henkin model. Define model $\mathcal{B}$ by 
$B=A$ and $S_n[B]=\{ \{\overline{a}\in A^n: \mathcal{A}\models R\overline{a}\}: R\in S_n[A]\}$ and interpret membership as set-theoretic membership in the metatheory. Further, interpret $c^{\mathcal{B}}=c^{\mathcal{A}}$ for individual constant symbols $c$, and interpret $C^{\mathcal{B}} = \{\overline{a}\in A^n: \mathcal{A}\models C\overline{a}\}$ for $n$-ary constant relation symbols $C$ for $n\geq 1$ and for $n$-ary relation symbols $C$ for $n\geq 1$. Define bijection $f:\mathcal{A}\rightarrow \mathcal{B}$ by $f(a)=a$ for $a$ in $A$ and by $f(R)=\{\overline{a}\in A^n: \mathcal{A}\models R\overline{a}\}$ for $R$ in $S_n[A]$. 
Then $f:\mathcal{A}\rightarrow \mathcal{B}$ preserves the membership relation: $\mathcal{A}\models R\overline{a}$ iff $\overline{a}$ in $f(R)$, which happens iff $\mathcal{B}\models (f(R))(f(\overline{a}))$, by definition of $\mathcal{B}$'s absolute interpretation of membership and by $f:\mathcal{A}\rightarrow \mathcal{B}$ being the identity on $A^n$. Further, $f(c^{\mathcal{A}})=c^{\mathcal{B}}$ for individual constants $c$ since $f$ is the identity on $A$, and $\mathcal{A}\models C\overline{x}$ iff $\mathcal{B}\models C(f(\overline{x}))$ for $n$-ary relation symbols $C$ by definition of $C^{\mathcal{B}}$. Likewise for $n$-ary constant relation symbols $C$ one has by definition $f(C^{\mathcal{A}})=\{\overline{a}\in A^n: \mathcal{A}\models C\overline{a}\} = C^{\mathcal{B}}$. And $\mathcal{B}$ satisfies Definition~\ref{eqn:model}(\ref{eqn:model:2})-(\ref{eqn:model:3}) since it is isomorphic to $\mathcal{A}$. Finally, $\mathcal{B}$ satisfies, by construction, Definition~\ref{defn:henkin}(\ref{defn:henkin:1})-(\ref{defn:henkin:2}). 
\end{proof}
\noindent In the above proof, we use $f(\overline{x})$ as an abbreviation for the tuple $(f(x_1), \ldots, f(x_n))$. Similarly, often in what follows we write $f(\overline{x})=\overline{y}$, which is an abbreviation for $f(x_1)=y_1, \ldots, f(x_n)=y_n$.

However, not every substructure of a Henkin model is Henkin model:\footnote{For a similar point, see Boney \cite[168]{Boney2020-cw}. His solution is to continue to work with Henkin models, but to just dispense with substructures~\emph{per se} and to just work with the corresponding class of embeddings which preserve atomics in both directions.}
\begin{ex} (Among pre-Henkin models, being a Henkin model is not preserved under substructure).

Consider a standard model $\mathcal{B}$ in a countable signature where $B$ is uncountable. By Downward L\"owenheim-Skolem, take an elementary substructure 
\begin{equation}
\mathcal{A}=(A,S_1[A], S_2[A], \ldots; E_1^{\mathcal{A}}, E_2^{\mathcal{A}}, \ldots)
\end{equation}
where $A$ is countably infinite and each $S_n[A]$ is countably infinite. Then $\mathcal{A}$ is a pre-Henkin model. Let $X$ be the unique element of $S_1[A]$ such that $\mathcal{A}\models \forall x \; Xx$. Then since it is an elementary substructure one has that $\mathcal{B}\models \forall \; x \; Xx$. Then since $\mathcal{B}$ is a Henkin model, $X$ is in $S_1[B]$ and by extensionality $X=B$. Since $A$ is countable and $B$ is uncountable, we have that $X$ is in $S_1[A]\setminus P(A)$, and thus $S_1[A]\nsubseteq P(A)$. Thus $\mathcal{A}$ is not a Henkin model. 
\end{ex}

We can also describe this example in terms of embeddings:
\begin{ex} (Among pre-Henkin models, being a Henkin model is not preserved under taking images of embeddings).

Let $\mathcal{A}, \mathcal{B}$ be as in the previous example. By Proposition~\ref{prop:iso} there is a Henkin model $\mathcal{A}_0$ and isomorphism $f:\mathcal{A}_0\rightarrow \mathcal{A}$. Since $\mathrm{id}_{\mathcal{A}}:\mathcal{A}\rightarrow \mathcal{B}$ is an elementary embedding, the composition $f=\mathrm{id}_{\mathcal{A}}\circ f: \mathcal{A}_0\rightarrow \mathcal{B}$ is an elementary embedding, and its image is $\mathcal{A}$. Hence the image of an elementary embedding between Henkin models is not necessarily a Henkin model.
\end{ex}

This example motivates the following definition, which is the natural analogue of Marker \cite[Definition 1.2]{Marker1984aa} in the setting of second-order logic:
\begin{defn} (Faithful embeddings).

Suppose that $\mathcal{A},\mathcal{B}$ are two pre-Henkin models. Then $f:\mathcal{A}\rightarrow \mathcal{B}$ is a \emph{faithful embedding} if it is an embedding of $\mathcal{A}$ into $\mathcal{B}$ and for all $n\geq 1$ and all $R$ in $S_n[A]$ and all $R^{\prime}$ in $S_n[B]$ with $f(R)=R^{\prime}$ and all $\overline{b}$ in $B^n$ with $\mathcal{B}\models R^{\prime}\overline{b}$, there is $\overline{a}$ in $A^n$ with $f(\overline{a})=\overline{b}$.
\end{defn}

These are related by the following:
\begin{prop}\label{prop:imagesoffaith} (Henkin models and images of faithful embeddings).

Suppose that $\mathcal{B}$ is a Henkin model. Suppose that $\mathcal{D}$ is a  substructure of $\mathcal{B}$. Then the following are equivalent:
\begin{enumerate}[leftmargin=*]
    \item \label{faithful:1} $\mathcal{D}$ is a Henkin model.
    \item \label{faithful:2} There is a Henkin model $\mathcal{A}$ and a faithful embedding $f:\mathcal{A}\rightarrow \mathcal{B}$ with image $\mathcal{D}$.
\end{enumerate}
\end{prop}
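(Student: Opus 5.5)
We prove the two directions separately, using Proposition~\ref{prop:iso} to replace an abstract Henkin model by a concrete one wherever needed. Throughout, $\mathcal{D}$ is a substructure of the Henkin model $\mathcal{B}$ in the many-sorted sense. So $D\subseteq B$, $S_n[D]\subseteq S_n[B]$, and membership in $\mathcal{D}$ is the restriction of membership in $\mathcal{B}$. Since $\mathcal{B}$ is a Henkin model, membership in $\mathcal{D}$ is therefore metatheoretic membership $\overline{d}\in R$, restricted to $R\in S_n[D]$ and $\overline{d}\in D^n$. Hence condition~(\ref{defn:henkin:2}) of Definition~\ref{defn:henkin} is automatic for $\mathcal{D}$, and the only issue is condition~(\ref{defn:henkin:1}): whether each $R\in S_n[D]$ is a subset of $D^n$, rather than merely of $B^n$.

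For (\ref{faithful:1})$\Rightarrow$(\ref{faithful:2}), take $\mathcal{A}=\mathcal{D}$ and let $f$ be the inclusion map. It is an embedding because $\mathcal{D}$ is a substructure, and its image is $\mathcal{D}$. For faithfulness, fix $R\in S_n[D]$ with $f(R)=R$, and $\overline{b}\in B^n$ with $\mathcal{B}\models R\overline{b}$. Since $\mathcal{B}$ is Henkin, $\overline{b}\in R$. Since $\mathcal{D}$ is Henkin, $R\subseteq D^n$, so $\overline{b}\in D^n$ and $\overline{a}=\overline{b}$ is the required preimage.

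For (\ref{faithful:2})$\Rightarrow$(\ref{faithful:1}), let $f:\mathcal{A}\to\mathcal{B}$ be faithful with image $\mathcal{D}$. Take $R'\in S_n[D]$; then $R'=f(R)$ for some $R\in S_n[A]$. Take any $\overline{b}\in R'$. Because $\mathcal{B}$ is Henkin, $\mathcal{B}\models R'\overline{b}$. Faithfulness then gives $\overline{a}\in A^n$ with $f(\overline{a})=\overline{b}$, so $\overline{b}\in f[A]^n=D^n$. Hence $R'\subseteq D^n$, which is condition~(\ref{defn:henkin:1}) for $\mathcal{D}$.

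It remains to check that $\mathcal{D}$ is a pre-Henkin model. As the image of an embedding, $\mathcal{D}$ is isomorphic to $\mathcal{A}$, and being pre-Henkin is preserved under isomorphism. So extensionality, first-order comprehension and the interpretation of constants all transfer from $\mathcal{A}$.

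The main subtlety, and the point needing most care, is the bookkeeping for the three kinds of constant symbols. Relation symbols of kind~(\ref{eqn:model:4:c}) are interpreted in $\mathcal{D}$ by restriction, while relation constants of kind~(\ref{eqn:model:4:b}) must be elements of $S_n[D]$; the substructure and embedding definitions handle these cases differently. Beyond that, the one substantive step in the whole argument is the use of faithfulness to force $R'\subseteq D^n$, where the absoluteness of membership in $\mathcal{B}$ is exactly what lets the faithfulness hypothesis be applied.
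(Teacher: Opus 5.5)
Your proposal is correct and follows the paper's proof. The inclusion map witnesses (1)$\Rightarrow$(2). For (2)$\Rightarrow$(1), faithfulness forces $S_n[D]\subseteq P(D^n)$, and the pre-Henkin axioms transfer along the isomorphism $f:\mathcal{A}\to\mathcal{D}$. The only cosmetic difference is that you read off absoluteness of membership in $\mathcal{D}$ directly from its being a substructure of $\mathcal{B}$, whereas the paper checks it with a short two-way argument using that $f$ is an embedding.
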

\begin{proof}
Suppose (\ref{faithful:1}); we show (\ref{faithful:2}). Then the identity map $\mathrm{id}:\mathcal{D}\rightarrow \mathcal{B}$ is a faithful embedding. For, suppose that $n\geq 1$ and $R$ in $S_n[D]$ and $\overline{b}$ in $B^n$ with $\mathcal{B}\models R\overline{b}$. Then $\overline{b}$ in $R$ since $\mathcal{B}$ is a Henkin model, and $R\subseteq D^n$ since $\mathcal{D}$ is a Henkin model. Then $\overline{b}$ in $D^n$, which is what we wanted to show.

Suppose (\ref{faithful:2}); we show (\ref{faithful:1}). Suppose $\mathcal{A}$ is a Henkin model and $f:\mathcal{A}\rightarrow \mathcal{B}$ is a faithful embedding with image $\mathcal{D}$. Then $D=\{f(a): a\in A\}$ and $S_n[D]=\{f(R): R\in S_n[A]\}$ and $E^{\mathcal{D}}_n = \{ (f(R), f(\overline{a})): (R,\overline{a})\in S_n[A]\times A^n \; \& \; \mathcal{A}\models R\overline{a}\}$ and $c^{\mathcal{D}}=f(c^{\mathcal{A}})$ for individual constant symbols $c$, and $C^{\mathcal{D}}=f(C^{\mathcal{A}})$ for $n$-ary relation constant symbols $C$, and $C^{\mathcal{D}} = \{f(\overline{x}): \mathcal{A}\models C\overline{x}\}$ for $n$-ary relation symbols $C$.
\begin{enumerate}[leftmargin=*, label=(\alph*), ref=\alph*]
    \item \label{prop:imagesoffaith:a} We first show that $S_n[D]\subseteq P(D^n)$. Suppose that $R^{\prime}$ in $S_n[D]$; we must show that $R^{\prime}\subseteq D^n$. Since $R^{\prime}$ in $S_n[D]$, we have $R^{\prime}=f(R)$ for some $R$ in $S_n[A]$. Since $f:\mathcal{A}\rightarrow \mathcal{B}$ preserves sorts, one has that $R^{\prime}$ in $S_n[B]$, and since $\mathcal{B}$ is a Henkin model we have that $R^{\prime}\subseteq B^n$. Suppose that $\overline{b}$ in $R^{\prime}$. Since $\mathcal{B}$ is a Henkin model we have $\mathcal{B}\models R^{\prime}\overline{b}$. Since $f:\mathcal{A}\rightarrow \mathcal{B}$ is faithful, there is $\overline{a}$ in $A^n$ with $f(\overline{a})=\overline{b}$. Then $\overline{b}$ in $D^n$ by definition of $D$.
\item\label{prop:imagesoffaith:b} We second show that $E^{\mathcal{D}}_n$ is equal to $\{(R^{\prime}, \overline{b})\in S_n[D]\times D^n : \overline{b}\in R^{\prime}\}$:
\begin{itemize}[leftmargin=*]
    \item Suppose $(f(R), f(\overline{a}))$ in $E^{\mathcal{D}}_n$. Then since $\mathcal{A}\models R\overline{a}$ and $f:\mathcal{A}\rightarrow \mathcal{B}$ is a homomorphism we have $\mathcal{B}\models (f(R))(f(\overline{a}))$. Since $\mathcal{B}$ is a Henkin model we have $f(\overline{a})$ in $f(R)$.
    \item Suppose that $(R^{\prime}, \overline{b})$ in $S_n[D]\times D^n$ with $\overline{b}$ in $R^{\prime}$. Then $R^{\prime}=f(R)$ for some $R$ in $S_n[A]$ and $\overline{b}=f(\overline{a})$ for some $\overline{a}$ in $A^n$. Since $\mathcal{B}$ is a Henkin model $\mathcal{B}\models R^{\prime}\overline{b}$. Since $f:\mathcal{A}\rightarrow \mathcal{B}$ is an embedding we have $\mathcal{A}\models R\overline{a}$. This then puts $(f(R), f(\overline{a}))$ in $E^{\mathcal{D}}_n$, and so puts $(R^{\prime},\overline{b})$ in $E^{\mathcal{D}}_n$.
\end{itemize}
    \item \label{prop:imagesoffaith:3} Finally, $\mathcal{D}$ satisfies the axioms in Definition~\ref{eqn:model}(\ref{eqn:model:2})-(\ref{eqn:model:3}) since $f:\mathcal{A}\rightarrow \mathcal{D}$ is an isomorphism and $\mathcal{A}$ satisfies these axioms.
\end{enumerate}

\end{proof}

\section{Proof of Theorems~\ref{thm:fefermanesq}, \ref{thm:fefermanesqfour}}\label{sec:proofmain}

\thmfefermanesq*

As mentioned in the abstract and the introduction, Theorem~\ref{thm:fefermanesq} basically follows from Feferman's Preservation Theorem \cite{Feferman1968-fi}, which Marker \cite{Marker1984aa} proved model-theoretically. Since readers interested in generalized quantifiers may not be interested in the model-theoretic details of Marker's presentation, we specialize Marker's proof to the setting of second-order logic. Further, since the proof is short, this requires about the same amount of space needed to verify that Theorem~\ref{thm:fefermanesq} follows from from Marker's result.\footnote{In outline, here is how one would verify that Theorem~\ref{thm:fefermanesq} follows from Marker's result. First, one defines $u\triangleleft_n R$ iff $R$ is $n$-ary and $u$ is an element of the field $\mathscr{U}R$ of $R$. Then one defines the class of formulas $\Sigma_n$ from $\triangleleft_n$ just like Marker defines his class $\Sigma$ from $\triangleleft$ (\cite[Definition 1.1]{Marker1984aa}). Then, one defines the class of formulas $\Sigma$ to be the union of the $\Sigma_n$. Finally, one has to go through Marker's proof and verify that his results about a single $\triangleleft$ work when one considers a countable sequence $\triangleleft_n$. This last step amounts to reproving Marker's theorem for this generalization. Lastly, one would show that the $\Sigma$ formulas align extensionally with the $\Sigma^1_1$ formulas in models of $\mathsf{\Sigma^1_1\mbox{-}{AC}_0^{-}}$.}

\begin{proof}

For (\ref{thm:fefermanesq:1}), suppose that $T$ is a theory extending $\mathsf{\Sigma^1_1\mbox{-}{AC}_0}$, and suppose that $Q$ is Henkin upward independent relative to $T$.  We show that $Q$ is $\Sigma^1_1$-definable relative to $T$. Without loss of generality, $Q$ is instantiated in a model of $T$ (otherwise choose an inconsistent $\Sigma^1_1$-definition).

Let $R_1, \ldots, R_{\ell}$ be new relation constant symbols, in the sense of Definition~\ref{eqn:model}(\ref{eqn:model:4:b}), corresponding to the type of $Q$. We abbreviate $R_1, \ldots, R_{\ell}$ as $\overline{R}$. It suffices to show: there is a $\Sigma^1_1$-formula $\Phi$ such that the theory $T$ proves: $\Phi(\overline{R})\leftrightarrow Q(\overline{R})$.

Note that since $Q$ is instantiated in a model of $T$, we have that there is a model $\mathcal{A}$ of $T+Q(\overline{R})$. For any model $\mathcal{A}$ in the expanded signature, let~$\Sigma^1_1(\mathcal{A})$ be the set of all~$\Sigma^1_1$-sentences in the expanded signature which are true in~$\mathcal{A}$. Consider the following:
\begin{case}\label{eqn:markerpair} There is a pair of models $\mathcal{A}_0$ and $\mathcal{B}_0$  such that $\mathcal{A}_0\models T+Q(\overline{R})$ and $\mathcal{B}_0\models T+\neg Q(\overline{R}) +\Sigma^1_1(\mathcal{A}_0)$.
\end{case}

Suppose that Case~\ref{eqn:markerpair} holds; we show that this eventuates in a violation of Henkin upward independence. Since $\mathcal{B}_0\models \Sigma^1_1(\mathcal{A}_0)$, one has that $\mathcal{B}_0$ is infinite iff $\mathcal{A}_0$ is infinite; and likewise $\mathcal{B}_0$ is finite and of size $n$ iff $\mathcal{A}_0$ is finite and of size~$n$. Hence, suppose that both $\mathcal{A}_0, \mathcal{B}_0$ are finite and of size~$n$. Finite models of the same size which satisfy the same first-order sentences in a signature are isomorphic and hence satisfy the same second-order sentences, which is impossible since $\mathcal{A}_0\models Q(\overline{R})$ and $\mathcal{B}_0\models \neg Q(\overline{R})$. Henceforth suppose that both $\mathcal{A}_0, \mathcal{B}_0$ are infinite, and without loss of generality countably infinite. Let $(\mathcal{A}_0, \mathcal{B}_0)$ be the disjoint sum of the two many-sorted structures. Let $(\mathcal{A}, \mathcal{B})$ be a countable recursively saturated elementary extension of $(\mathcal{A}_0, \mathcal{B}_0)$.\footnote{E.g. \cite[169]{Marker2006-rd}, \cite[379]{Simpson2009-ra}.} By taking an isomorphic copy \emph{\`a la} Proposition~\ref{prop:iso}, we may assume that both $\mathcal{A},\mathcal{B}$ are Henkin models. Enumerate both the first and second-order part of $\mathcal{A}$ simultaneously as $\alpha_1, \alpha_2, \ldots$, and similarly enumerate $\mathcal{B}$ as $\beta_1, \beta_2, \ldots$. We build a faithful embedding $f=\bigcup_n f_n:\mathcal{A}\rightarrow \mathcal{B}$ in stages $f_0,f_1, f_2, \ldots$ such that $f_n$ satisfies: 
\begin{enumerate}[leftmargin=*, label=(\roman*), ref=\roman*]
\item\label{eqn:constmarker1} The domain of $f_n$ is a finite subset including $\alpha_1, \ldots, \alpha_n$.
\item\label{eqn:constmarker2} For any $\Sigma^1_1$-formula $\varphi(\upsilon_1, \ldots, \upsilon_j)$ with free variables among $\upsilon_1, \ldots, \upsilon_j$, if $\mathcal{A}\models \varphi(\gamma_1, \ldots, \gamma_j)$ where $\gamma_1, \ldots, \gamma_j$ enumerates the domain of $f_n$, then $\mathcal{B}\models \varphi(f_n(\gamma_1), \ldots, f_n(\gamma_j))$.
\item\label{eqn:constmaker3} If $\beta_i$ is the least first-order object with $i\leq n$ which is in some tuple in some second-order object $\beta$ in the range of $f_n$, then $\beta_i$ is in the range of~$f_n$.
\end{enumerate}
Note that while each of the variables $\upsilon_1, \ldots, \upsilon_j$ in (\ref{eqn:constmarker2}) is either first-order or second-order (and if second-order is unary, binary, etc.), we are enumerating them all simultaneously but not marking their status in the notation: one could easily mark them as first-order or second-order (and if second order one could mark whether they are unary, binary, etc.), but this would terribly clutter the notation. By way of motivation for these conditions, note that:
\begin{itemize}[leftmargin=*]
    \item Condition~(\ref{eqn:constmarker2}) ensures that $f:\mathcal{A}\rightarrow \mathcal{B}$ is an embedding. In particular it ensures the injectivity of $f$ and and that $f$ preserves the individual constant symbols and $n$-ary relation constant symbols. For, consider the formula $\varphi_c(\upsilon_k)\equiv c=\upsilon_k$ for an individual constant symbol $c$ and individual variable $\upsilon_k$, and consider the formula $\varphi_i(\upsilon_j)\equiv R_i = \upsilon_j$ for one of the $R_i$ from our sequence of $R_1, \ldots, R_{\ell}$ of relation constants symbols and $\upsilon_j$ an $n$-ary variable. Since $\mathcal{A}\models \varphi_c(c^{\mathcal{A}})$ we have by condition~(\ref{eqn:constmarker2}) that $\mathcal{B}\models \varphi_c(f(c^{\mathcal{A}}))$, which is to say that $f(c^{\mathcal{A}})=c^{\mathcal{B}}$. Since $\mathcal{A}\models \varphi_i(R_i^{\mathcal{A}})$ we have by condition~(\ref{eqn:constmarker2}) that $\mathcal{B}\models \varphi_i(f(R^{\mathcal{A}}_i))$, which is to say that $f(R^{\mathcal{A}}_i)=R^{\mathcal{B}}_i$.
    \item Condition~(\ref{eqn:constmaker3}) ensures that $f:\mathcal{A}\rightarrow \mathcal{B}$ is a faithful embedding.
\end{itemize}
For the base case, one takes $f_0=\emptyset$, and condition~(\ref{eqn:constmarker2}) is satisfied because $\mathcal{B}\models \Sigma^1_1(\mathcal{A})$. For the inductive step, we consider the following recursive type $\Gamma(\upsilon)$, where $\gamma_1, \ldots, \gamma_j$ enumerates the domain of $f_n$. We insert an initial formula in  $\Gamma(\upsilon)$ which says that $\upsilon$ comes from $\mathcal{B}$, and in general we put a formula in $\Gamma(\upsilon)$ if it has the form 
\begin{equation}\label{eqn:theofrm}
\varphi^{\mathcal{A}}(\gamma_1, \ldots, \gamma_j, \alpha_{n+1})\rightarrow \varphi^{\mathcal{B}}(f_n(\gamma_1), \ldots, f_n(\gamma_j), \upsilon)
\end{equation}
where $\varphi(\upsilon_1, \ldots, \upsilon_j, \upsilon)$ is a $\Sigma^1_1$-formula, and where as in (\ref{eqn:constmarker2}) the variables $\upsilon_1, \ldots, \upsilon_j, \upsilon$ may be first-order or second-order. This recursive type $\Gamma(\upsilon)$ is indeed finitely satisfied. For, suppose that we have $m$ formulas:
\begin{equation}
\varphi_1(\upsilon_1, \ldots, \upsilon_j, \upsilon), \ldots, \varphi_m(\upsilon_1, \ldots, \upsilon_j, \upsilon)
\end{equation}
By reordering, we may suppose that $\mathcal{A}\models \varphi_i(\gamma_1, \ldots, \gamma_j, \alpha_{n+1})$ iff $i\leq m_0$, for some $m_0\leq m$. Then $\mathcal{A}\models \exists \; \upsilon \; \bigwedge_{i=1}^{m_0} \varphi_i(\gamma_1, \ldots, \gamma_j, \upsilon)$. Since this is itself a $\Sigma^1_1$-formula, by (\ref{eqn:constmarker2}) one has that $\mathcal{B}\models \exists \; \upsilon \; \bigwedge_{i=1}^{m_0} \varphi_i(f_n(\gamma_1), \ldots, f_n(\gamma_j), \upsilon)$. Thus $\Gamma(\upsilon)$ is indeed finitely satisfied and so satisfied in $\mathcal{B}$ by some $\beta$, and then we set $f_{n+1}(\alpha_{n+1})=\beta$. Thus we have satisfied~(\ref{eqn:constmarker1})-(\ref{eqn:constmarker2}). To ensure~(\ref{eqn:constmaker3}), suppose that $\beta_i$ is the least first-order object with $i\leq n+1$ which is in some tuple in some second-order object $\beta$ in the range of $f_{n+1}$, say $\beta=f(\gamma_p)$. Without loss of generality the tuple is, say, a 2-tuple, so that $(\beta_u,\beta_i)\in \beta$. Consider the following recursive type $\Gamma^{\ast}(v)$, where $v$ is a first-order variable.  We put an initial formula in  $\Gamma^{\ast}(v)$ which says that $v$ comes from $\mathcal{A}$, and in general we put a formula in $\Gamma^{\ast}(v)$ if it has the form 
\begin{equation}
\varphi^{\mathcal{B}}(f_n(\gamma_1), \ldots, f_n(\gamma_j), f_{n+1}(\alpha_{n+1}), \beta_i)\rightarrow \varphi^{\mathcal{A}}(\gamma_1, \ldots, \gamma_j, \alpha_{n+1}, v)
\end{equation}
where $\varphi(\upsilon_1, \ldots, \upsilon_j, \upsilon_{j+1}, v)$ is a $\Pi^1_1$-formula. This recursive type is indeed finitely satisfied. For, suppose that we have $m$-such formulas 
\begin{equation}
\varphi_1(\upsilon_1, \ldots, \upsilon_j, \upsilon_{j+1}, v), \ldots, \varphi_m(\upsilon_1, \ldots, \upsilon_j, \upsilon_{j+1}, v)
\end{equation}
By reordering, we again may suppose that $\mathcal{B}\models \varphi_i(f_n(\gamma_1), \ldots, f_n(\gamma_j), f_{n+1}(\alpha_{n+1}), \beta_i)$ iff $i\leq m_0$, for some $m_0\leq m$. Then one has:
\begin{equation}
\mathcal{B}\models \exists \; v \; \exists \; u \;  (f(\gamma_p))(u,v) \; \wedge\; \bigwedge_{i=1}^{m_0} \varphi_i(f_n(\gamma_1), \ldots, f_n(\gamma_j), f_{n+1}(\alpha_{n+1}), v)
\end{equation}
Since $\mathcal{B}$ is a model of $\mathsf{\Sigma^1_1\mbox{-}AC_0^{-}}$, this is equivalent to a $\Pi^1_1$-formula over $\mathcal{B}$, and similarly over $\mathcal{A}$, and thus by~(\ref{eqn:constmarker2}) holding for $f_{n+1}$ as defined so far, we have that 
\begin{equation}
\mathcal{A}\models \exists \; v \; \exists \; u \;  \gamma_p(u,v)\;\wedge \; \bigwedge_{i=1}^{m_0} \varphi_i(\gamma_1, \ldots, \gamma_j, \alpha_{n+1}, v)
\end{equation}
Thus $\Gamma^{\ast}(v)$ is indeed finitely satisfied and so satisfied in $\mathcal{A}$ by some $\gamma$, and then we set $f_{n+1}(\gamma)=\beta_i$. This completes the construction of the faithful embedding $f:\mathcal{A}\rightarrow \mathcal{B}$. Let $\mathcal{D}$ be the image of $\mathcal{A}$ under the faithful embedding $f:\mathcal{A}\rightarrow \mathcal{B}$. By Proposition~\ref{prop:imagesoffaith}, we have that $\mathcal{D}$ is a Henkin model which is a substructure of $\mathcal{B}$. But the assumption that we are in Case~\ref{eqn:markerpair} implies that $\mathcal{A}\models Q(\overline{R})$, and since $f:\mathcal{A}\rightarrow \mathcal{D}$ is an isomorphism, we have $\mathcal{D}\models Q(\overline{R})$. By Henkin upwards independence, we have $\mathcal{B}\models Q(\overline{R})$, which contradicts that we are in Case~\ref{eqn:markerpair}, which says that $\mathcal{B}\models \neg Q(\overline{R})$.

Hence, rather we are in the following case

\begin{case}\label{eqn:markerpair2} There is no pair of models $\mathcal{A}_0$ and $\mathcal{B}_0$  such that $\mathcal{A}_0\models T+Q(\overline{R})$ and $\mathcal{B}_0\models T+\neg Q(\overline{R}) +\Sigma^1_1(\mathcal{A}_0)$.
\end{case}

Then for any model $\mathcal{A}\models T+Q(\overline{R})$, one has that $T+\neg Q(\overline{R}) +\Sigma^1_1(\mathcal{A})$ is inconsistent and so by compactness there is a $\Sigma^1_1$-sentence $\Phi_{\mathcal{A}}$ in $\Sigma^1_1(\mathcal{A})$ such that 
\begin{equation}\label{eqn:onehalfMarkerlem2.1}
T\vdash \Phi_{\mathcal{A}} \rightarrow Q(\overline{R})
\end{equation}
In this, we appeal to the fact that $\Sigma^1_1$-formulas are closed under finite conjunctions. Then consider the theory 
\begin{equation}
\Delta = \{\neg \Phi_{\mathcal{A}}: \mathcal{A} \models T+ Q(\overline{R})\}
\end{equation}
Then $T+Q(\overline{R})+\Delta$ is inconsistent. For, suppose that it had a model $\mathcal{A}$. Then it would model $\neg \Phi_{\mathcal{A}}$ (since this sentence is in $\Delta$) and it would model $\Phi_{\mathcal{A}}$ (since this sentence is, by construction, in $\Sigma^1_1(\mathcal{A})$, and hence true on $\mathcal{A}$). Since $T+Q(\overline{R})+\Delta$ is inconsistent, there are a finite number of models $\mathcal{A}_1, \ldots, \mathcal{A}_m \models T+ Q(\overline{R})$ such that $T + Q(\overline{R})$ proves $\bigvee_{i=1}^m \Phi_{\mathcal{A}_i}$. But then together with (\ref{eqn:onehalfMarkerlem2.1}), we have that
\begin{equation}
T\vdash  Q(\overline{R}) \leftrightarrow \bigvee_{i=1}^m \Phi_{\mathcal{A}_i}
\end{equation}
This finishes the proof of (\ref{thm:fefermanesq:1}) 

The proof of (\ref{thm:fefermanesq:2}) follows from (\ref{thm:fefermanesq:1}) by replacing $Q$ by $\neg Q$; and (\ref{thm:fefermanesq:3}) follows directly from (\ref{thm:fefermanesq:1})-(\ref{thm:fefermanesq:2}).

\end{proof}

Now we turn to the proof of Theorem~\ref{thm:fefermanesqfour}. We first prove a preliminary theorem:

\begin{thm}\label{thm:fefermanesqfourpre} (Sufficient conditions for grades of bounded definability).

Suppose that $T$ is a theory extending $\mathsf{ACA}_0^{-}$ in a signature with only finitely many individual constants which is closed downwards under internal substructures. 

Suppose that $Q$ is a generalized quantifier which is $\Sigma^1_1$-definable (resp. $\Pi^1_1$-definable, resp. first-order definable) relative to $T$ and Henkin conservative relative to $T$ and has strong existential import relative to $T$. Then~$Q$ is bounded $\Sigma^1_1$-definable (resp. bounded $\Pi^1_1$-definable, resp. bounded first-order definable) relative to $T$.
\end{thm}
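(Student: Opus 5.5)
The plan is to take the given definition $\Phi(\overline{R},\overline{R}^{\prime})$ (which is $\Sigma^1_1$, resp. $\Pi^1_1$, resp. first-order) and show that its relativization $\Phi^{\mathscr{U}\overline{R}}(\overline{R},\mathscr{U}\overline{R}\cap\overline{R}^{\prime})$ already works, after a small adjustment for the individual constants. Relativization preserves the syntactic class, and substituting the first-order definable terms $\mathscr{U}\overline{R}\cap R^{\prime}_i$ for $R^{\prime}_i$ does not add second-order quantifiers. So it suffices to show that $T$ proves
\begin{equation*}
Q(\overline{R},\overline{R}^{\prime})\leftrightarrow \big(\mathscr{U}\overline{R}\neq\emptyset \wedge \textstyle\bigwedge_{c}(\mathscr{U}\overline{R})c\wedge \Phi^{\mathscr{U}\overline{R}}(\overline{R},\mathscr{U}\overline{R}\cap\overline{R}^{\prime})\big).
\end{equation*}
Since there are finitely many constants $c$, the added conjuncts are finite and first-order. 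The emptiness conjunct can be written as $\exists\, x\,(\mathscr{U}\overline{R})x$, which is already bound to $\mathscr{U}\overline{R}$; the conjunct $(\mathscr{U}\overline{R})c$ is quantifier-free. So the right-hand side is of the form $\Psi^{\mathscr{U}\overline{R}}$ with $\Psi$ in the required class, possibly after prenexing.

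To verify the biconditional, I will use completeness and argue semantically. Fix a Henkin model $\mathcal{B}\models T$ and $\overline{R},\overline{R}^{\prime}$ in $\mathcal{B}$ (taking a Henkin model via Proposition~\ref{prop:iso}). If $\mathscr{U}\overline{R}=\emptyset$ or it misses some $c^{\mathcal{B}}$, then strong existential import gives $\neg Q$, and the right side is false as well. Otherwise $A:=\mathscr{U}\overline{R}$ is a nonempty element of $S_1[B]$ (by first-order comprehension) containing all constants. So we may form the internal substructure $\mathcal{A}$ associated to $A$ as in Definition~\ref{defn:internal}. By downward closure, $\mathcal{A}\models T$. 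The relations $\overline{R}$ and $\mathscr{U}\overline{R}\cap\overline{R}^{\prime}$ are subsets of powers of $A$ lying in $S[B]$, hence in $S[A]$.

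The chain of equivalences then runs as follows. By Henkin conservativity in $\mathcal{B}$, $\mathcal{B}\models Q(\overline{R},\overline{R}^{\prime})$ iff $\mathcal{B}\models Q(\overline{R},A\cap\overline{R}^{\prime})$, and by the definition in $\mathcal{B}$ this holds iff $\mathcal{B}\models\Phi(\overline{R},A\cap\overline{R}^{\prime})$. The key missing link is to pass from $\mathcal{B}$ to $\mathcal{A}$ for $Q$ itself, since $\Phi$ is unbounded. For this I would rewrite the definition of $Q$ in $\mathcal{A}$ and then transport via Theorem~\ref{thm:internal}: $\mathcal{A}\models Q(\overline{R},A\cap\overline{R}^{\prime})$ iff $\mathcal{A}\models\Phi(\overline{R},A\cap\overline{R}^{\prime})$ (since $\mathcal{A}\models T$) iff $\mathcal{B}\models\Phi^{A}(\overline{R},A\cap\overline{R}^{\prime})$ by Theorem~\ref{thm:internal}. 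This is exactly the right-hand side.

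The main obstacle is therefore to show $\mathcal{B}\models Q(\overline{R},A\cap\overline{R}^{\prime})$ iff $\mathcal{A}\models Q(\overline{R},A\cap\overline{R}^{\prime})$ without assuming domain independence, which is not among the hypotheses. The approach I would try first is to use that $Q$ itself is a formula $Q$ of $T$'s language, and that conservativity is a theorem of $T$ holding in $\mathcal{A}$ as well. However, this only rearranges arguments within one model. I suspect the intended route is to observe that $\Phi$ may be replaced by its relativization because the statement's hypotheses, via Theorem~\ref{thm:internal}, yield $\mathcal{A}\models Q$ iff $\mathcal{B}\models Q^{A}$. One then needs $Q^{A}\leftrightarrow Q$ on arguments inside $A$, which I would attempt to extract from conservativity plus downward closure by a compactness argument on the class of pairs $(\mathcal{B},\mathcal{A})$. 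If that fails, I would concede that Henkin domain independence must be added, in which case Corollary~\ref{cor:stable} shows this is what Theorem~\ref{thm:fefermanesqfour} supplies.
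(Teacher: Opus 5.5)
Your reduction is, step for step, the paper's own proof. It has the nonemptiness and constant conjuncts (the paper inserts them as dummy existentials $\exists\, x\, x=x$ and $\exists\, x\, x=c_i$ before binding), and the degenerate case via strong existential import. It then takes the internal substructure $\mathcal{A}$ on $A=\mathscr{U}\overline{R}$, uses downward closure to get $\mathcal{A}\models T$, applies conservativity in $\mathcal{B}$, and uses Theorem~\ref{thm:internal} to pass from $\mathcal{A}\models\Phi$ to $\mathcal{B}\models\Phi^{A}$.

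The link you flag is $\mathcal{B}\models Q(\overline{R},A\cap\overline{R}^{\prime})$ iff $\mathcal{A}\models Q(\overline{R},A\cap\overline{R}^{\prime})$. This is exactly where the paper invokes ``Henkin domain invariance relative to $T$ together with Proposition~\ref{prop:absolutenboolean}'', a hypothesis absent from the statement as printed. So the gap lies in the statement, not in your plan. The compactness rescue you sketch cannot succeed, because without domain independence the statement is false. Take $T=\mathsf{ACA}_0^{-}$ in the empty signature and
\[
Q(X,Y)\;:\equiv\; X\neq\emptyset\wedge(\neg\varphi_3\rightarrow X\cap Y\neq\emptyset)\wedge(\varphi_3\rightarrow X\subseteq Y),
\]
with $\varphi_3$ as in Example~\ref{ex:disjunction:1}.

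\begin{itemize}
\item $Q$ is first-order and has strong existential import.
\item $Q$ is Henkin conservative, since $X\cap(X\cap Y)=X\cap Y$ and $X\subseteq X\cap Y\leftrightarrow X\subseteq Y$.
\item Take standard models $A=\{a,b\}\subseteq B=\{a,b,d\}$, with $X=\{a,b\}$ and $Y=\{a\}$. Then $\mathcal{A}\models Q(X,Y)$ and $\mathcal{B}\models\neg Q(X,Y)$.
\item Here $\mathcal{A}$ is precisely the internal substructure of $\mathcal{B}$ on $\mathscr{U}X$, so it is literally your transfer step that fails.
\item By Corollary~\ref{cor:stable}(\ref{cor:stable:1}), $Q$ is not bounded $\Sigma^1_1$-definable, hence not bounded first-order definable.
\item Swapping the two conditional clauses gives, via Corollary~\ref{cor:stable}(\ref{cor:stable:2}), a counterexample to the $\Pi^1_1$ clause.
\end{itemize}

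So commit to your fallback: add Henkin domain independence relative to $T$. This is exactly what is available where this result is applied (Theorems~\ref{thm:fefermanesqfour} and~\ref{thm:fodefinabletwo}). The transfer step is then immediate, with Proposition~\ref{prop:absolutenboolean} guaranteeing that the terms $\mathscr{U}\overline{R}\cap R^{\prime}_i$ denote the same elements of $S_{\,\overline{n}\,}[A]$ in $\mathcal{A}$ and $\mathcal{B}$. Your chain of equivalences is then complete and coincides with the paper's argument. Corollary~\ref{cor:stable} also shows that the matching grade of Henkin independence is necessary for each of the three conclusions, so some form of Henkin independence must be assumed.
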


\begin{proof}

Consider a Henkin model $\mathcal{B}$ of $T$, and consider the $\Sigma^1_1$-definition $\Psi(\overline{R},\overline{R}^{\prime})\equiv \exists \; \overline{S} \; \Psi_0(\overline{R},\overline{R}^{\prime}, \overline{S})$ of $Q(\overline{R},\overline{R}^{\prime})$, where $\Psi_0(\overline{R},\overline{R}^{\prime}, \overline{S})$ contains no second-order quantifiers. Then the following $\Sigma^1_1$-formula $\Phi(\overline{R},\overline{R}^{\prime})$ also defines $Q(\overline{R},\overline{R}^{\prime})$ relative to $T$, where we have added the dummy existentials, where $c_1, \ldots, c_n$ are the individual constants in the signature:
\begin{equation}
\exists \; \overline{S} \; \bigg(\big((\exists \; x \; x=x) \wedge \bigwedge_{i=1}^n (\exists \; x \; x=c_i)\big) \wedge \Psi_0(\overline{R},\overline{R}^{\prime}, \overline{S})\bigg)
\end{equation}
Let $\Phi^{\mathscr{U}\overline{R}}(\overline{R},\overline{R}^{\prime})$ be the result of bounding all quantifiers in $\Phi(\overline{R},\overline{R}^{\prime})$ to $\mathscr{U}\overline{R}$, as in Definition~\ref{defn:binding}(\ref{defn:binding3}), so that $\Phi^{\mathscr{U}\overline{R}}(\overline{R},\overline{R}^{\prime})$ is equivalent to:
\begin{equation}\label{eqn:iamlongeqn}
\exists \; \overline{S}\subseteq \mathscr{U}\overline{R} \; \bigg(\big((\exists \; x \; (\mathscr{U}\overline{R})x) \wedge \bigwedge_{i=1}^n (\mathscr{U}\overline{R})c_i \big) \wedge \Psi_0^{\mathscr{U}\overline{R}}(\overline{R},\overline{R}^{\prime}, \overline{S})\bigg)
\end{equation}

Let $\overline{R},\overline{R}^{\prime}$ be fixed in $S_{\overline{n}}[B]$. We claim that $\mathcal{B}\models Q(\overline{R},\overline{R}^{\prime})\leftrightarrow \Phi^{\mathscr{U}\overline{R}}(\overline{R},\overline{R}^{\prime}\cap \mathscr{U}\overline{R})$.

First suppose that $\mathscr{U} \overline{R}$ is empty or does not include the interpretation of one of the individual constants $c_1, \ldots, c_n$. By strong existential import (cf. Definition~\ref{defn:strongexistential}), one has that $\mathcal{B}\models \neg Q(\overline{R},\overline{R}^{\prime})$. By (\ref{eqn:iamlongeqn}) we have $\mathcal{B}\models \neg \Phi^{\mathscr{U}\overline{R}}(\overline{R},\overline{R}^{\prime})$. Hence we have the biconditional since both sides are false.

Second suppose that $\mathscr{U} \overline{R}$ is non-empty and contains all of the interpretations of the individual constants $c_1, \ldots, c_n$. Then by choosing $A=\mathscr{U} \overline{R}$ we have a Henkin internal substructure $\mathcal{A}$ of $\mathcal{B}$ (cf. Definition~\ref{defn:internal}). Since $T$ is closed downwards under internal substructures, we have that $\mathcal{A}\models T$. Further, one has $\mathcal{B}\models Q(\overline{R},\overline{R}^{\prime})$ iff $\mathcal{B}\models Q(\overline{R},\overline{R}^{\prime}\cap \mathscr{U}\overline{R})$ by Henkin conservativity. By Henkin domain invariance relative to~$T$ together with Proposition~\ref{prop:absolutenboolean}, this happens iff $\mathcal{A}\models  Q(\overline{R},\overline{R}^{\prime}\cap \mathscr{U}\overline{R})$. Using the $\Sigma^1_1$-definition in $\mathcal{A}$ this happens iff $\mathcal{A}\models \Phi(\overline{R},\overline{R}^{\prime}\cap \mathscr{U}\overline{R})$. By Theorem~\ref{thm:internal}, this happens iff $\mathcal{B}\models \Phi^{\mathscr{U}\overline{R}}(\overline{R},\overline{R}^{\prime}\cap \mathscr{U}\overline{R})$. Hence indeed $\mathcal{B}\models Q(\overline{R},\overline{R}^{\prime})\leftrightarrow \Phi^{\mathscr{U}\overline{R}}(\overline{R},\overline{R}^{\prime}\cap \mathscr{U}\overline{R})$. 

Thus $Q(\overline{R},\overline{R}^{\prime})$ is bounded $\Sigma^1_1$-definable. 

The same argument works for the $\Pi^1_1$-case and the first-order case.

\end{proof}

Here is then the theorem on bounded definability:

\thmfefermanesqfour*

\begin{proof}
This follows from Theorem~\ref{thm:fefermanesq} and Theorem~\ref{thm:fefermanesqfourpre}.
\end{proof}

\section{Proof of Theorems~\ref{thm:fodefinable}-\ref{thm:fodefinabletwo}}\label{sec:fodefinable}

First we define a substitution notion:
\begin{defn}\label{defn:mysubstitution} (Substituting a first-order formula for a relation variable).

Suppose that the type of $\overline{R}$ is $\overline{n}=( n_1, \ldots, n_k)$. Suppose that $\Phi(\overline{R})$ is a formula of second-order logic which has no second-order quantifiers and only second-order free variables, including the displayed $\overline{R}$.

Suppose that we have specified an additional first-order signature, which has relation symbols corresponding to all the free second-order variables of $\Phi(\overline{R})$ \emph{excluding} those of the displayed $\overline{R}$ (our substitution notion will remove these displayed $\overline{R}$).

Then a tuple of first-order formulas $\overline{\xi}$ in this signature is of \emph{type $\overline{n}$} if it has the form 
\begin{equation}\label{eqn:displayallvariables}
(\xi_1(\overline{v_1},\overline{x}), \ldots, \xi_k(\overline{v_k}, \overline{x}))
\end{equation}
where the displayed $\overline{v_i}$ are of length $n_i$, and all the variables in $\overline{v_1}, \ldots, \overline{v_n}, \overline{x}$ are pairwise distinct, and do not occur free or bound in $\Phi(\overline{R})$.

If $\overline{\xi}$ is of type $\overline{n}$, then we let $\Phi(\overline{\xi}(\cdot, \overline{x}))$ be the first-order formula in the signature which is the result of replacing each instance of $R_{i}\overline{u}$ in $\Phi(\overline{R})$ with $\xi_i(\overline{u}, \overline{x})$. 

\end{defn}

Hence $\Phi(\overline{\xi}(\cdot,\overline{x}))$ has only the free variables $\overline{x}$ coming from $\overline{\xi}$. Note that when we are writing $\Phi(\overline{\xi}(\cdot, \overline{x}))$, we are suppressing the $\overline{v}$ variables in (\ref{eqn:displayallvariables}). If one wanted a more explicit notation, one could write $\Phi(\overline{\xi}(\overline{v},\overline{x}))$, but this has the disadvantage of creating the impression that $\overline{v}$ is free in this formula, which it is not. The following elementary example is illustrative:

\begin{ex} (Example of substitution).

Let $\overline{R}$ be $R_1,R_2$, where $R_1$ is unary and $R_2$ is binary, and let $\Phi(\overline{R})$ be the following:
\begin{equation}
\forall \; x \; \forall \; y \; \big((R_1x\wedge R_1y \wedge x\neq y)\rightarrow R_2xy\big)
\end{equation}
Then the type of $\overline{R}$ is $( 1,2)$. Suppose that we specify the first-order signature with unary relation symbol $F$ and binary relation symbol $S$. Consider the following tuple of first-order formulas $\overline{\xi}$ of type $( 1,2)$:
\begin{equation}
\big( Fv_{1,1} \wedge Sx_1 v_{1,1},\hspace{3mm} S v_{2,2} v_{2,1} \rightarrow S v_{2,1} x_2\big) 
\end{equation}
Then one has that $\Phi(\overline{\xi}(\cdot, \overline{x}))$ is 
\begin{equation}
\forall \; x \; \forall \; y \; \big((Fx \wedge Sx_1 x\wedge Fy \wedge Sx_1 y \wedge x\neq y)\rightarrow (Syx\rightarrow Sxx_2)\big)
\end{equation}
which has only free variables $x_1, x_2$.
\end{ex}

We begin the march to the proof of Theorems~\ref{thm:fodefinable}-\ref{thm:fodefinabletwo} by showing the following.

\begin{thm}\label{thm:fromdeltatoaca} (From $\Delta^1_1$-definable to first-order definable in $\mathsf{ACA_0^{wo}}$).

 Suppose that $Q(\overline{R})$ is a generalized quantifier which is $\Delta^1_1$-definable relative to $\mathsf{ACA_0^{wo}}$. Then $Q(\overline{R})$ is first-order definable relative to $\mathsf{ACA_0^{wo}}$.
 
\end{thm}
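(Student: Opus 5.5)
The plan is a compactness argument combined with passage to the ``definable part'' of a model. Fix a $\Sigma^1_1$-formula $\exists\,\overline{S}\,\varphi(\overline{R},\overline{S})$ and a $\Pi^1_1$-formula $\forall\,\overline{S}^{\prime}\,\psi(\overline{R},\overline{S}^{\prime})$, with $\varphi,\psi$ first-order, both of which define $Q(\overline{R})$ relative to $\mathsf{ACA_0^{wo}}$. Work in the signature of $\mathsf{ACA_0^{wo}}$ (membership, $\preceq$, and any constants) together with relation symbols for the free variables $\overline{R}$. For each tuple $\overline{\xi}$ of first-order formulas of the type of $\overline{S}$, in the sense of Definition~\ref{defn:mysubstitution}, consider the first-order sentence $\exists\,\overline{x}\,\varphi(\overline{R},\overline{\xi}(\cdot,\overline{x}))$. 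Each such sentence implies $Q(\overline{R})$ over $\mathsf{ACA_0^{wo}}$, because first-order comprehension turns $\overline{\xi}(\cdot,\overline{x})$ into an actual witness $\overline{S}$.

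The main step is to show that the theory
\[
\mathsf{ACA_0^{wo}}+Q(\overline{R})+\{\neg\exists\,\overline{x}\,\varphi(\overline{R},\overline{\xi}(\cdot,\overline{x})):\overline{\xi}\}
\]
is inconsistent. Suppose instead that it has a model. By Proposition~\ref{prop:iso} we may take it to be a Henkin model $\mathcal{B}$. Let $\mathcal{A}$ have the same first-order domain $A=B$, and let $S_n[A]$ be the $n$-ary relations that are first-order definable, with parameters from $B$, over the first-order structure $(B,\preceq^{\mathcal{B}},\overline{R},\ldots)$. By comprehension in $\mathcal{B}$, each such relation lies in $S_n[B]$, so $\mathcal{A}$ is a substructure of $\mathcal{B}$ in the sense of Definition~\ref{defn:substructure}.

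The routine but essential check is that $\mathcal{A}\models\mathsf{ACA_0^{wo}}$. For comprehension, a first-order formula over $\mathcal{A}$ whose second-order parameters are themselves definable can be unfolded, by substituting their defining formulas, into a first-order formula over the base structure. For the well-ordering axiom, every element of $S_1[A]$ is in $S_1[B]$, so it has a $\preceq$-least element by the axiom in $\mathcal{B}$. I expect this verification, together with the bookkeeping about which parameters and signature symbols are allowed in $\overline{\xi}$, to be the main obstacle to write cleanly. It also appears to be the only place where the well-order axiom, rather than bare $\mathsf{ACA}_0^-$, enters this particular argument.

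With this in hand the contradiction is quick. Since $\mathcal{B}\models Q(\overline{R})$, we have $\mathcal{B}\models\forall\,\overline{S}^{\prime}\,\psi$. Because $\mathcal{A}$ and $\mathcal{B}$ share the first-order domain and $S_n[A]\subseteq S_n[B]$, the first-order formula $\psi$ is absolute between them (this is Theorem~\ref{thm:stable} with $X=\mathbb{V}$, or a direct induction). Hence $\mathcal{A}\models\forall\,\overline{S}^{\prime}\,\psi$, and so $\mathcal{A}\models Q(\overline{R})$. The $\Sigma^1_1$-definition then gives $\overline{S}\in S_{\overline{n}}[A]$ with $\mathcal{A}\models\varphi(\overline{R},\overline{S})$. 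By construction $\overline{S}$ is $\overline{\xi}(\cdot,\overline{a})$ for some formulas $\overline{\xi}$ and parameters $\overline{a}$, and absoluteness of $\varphi$ transfers this back to $\mathcal{B}$. Thus $\mathcal{B}\models\exists\,\overline{x}\,\varphi(\overline{R},\overline{\xi}(\cdot,\overline{x}))$, contradicting the choice of $\mathcal{B}$.

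By compactness, finitely many tuples $\overline{\xi}_1,\ldots,\overline{\xi}_m$ already yield inconsistency. Therefore $\mathsf{ACA_0^{wo}}\vdash Q(\overline{R})\rightarrow\bigvee_{i=1}^m\exists\,\overline{x}\,\varphi(\overline{R},\overline{\xi}_i(\cdot,\overline{x}))$. The converse implication holds by the first paragraph. So $Q(\overline{R})$ is equivalent to this first-order formula, whose only free variables are $\overline{R}$.
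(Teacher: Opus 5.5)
Your proof is correct. Its key ingredient is the same as the paper's: the Henkin model whose second-order part consists of the first-order definable relations (with parameters) satisfies $\mathsf{ACA_0^{wo}}$, so a $\Sigma^1_1$-witness there is a tuple $\overline{\xi}(\cdot,\overline{a})$, and compactness then finishes.

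The organization differs, however. Below, your $\varphi,\psi$ are the paper's $\Phi,\Psi$.

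The paper works with a purely first-order theory. This theory is $T_0$ (the first-order well-order schema, with symbols for $\overline{R}$) together with all instances $\forall\,\overline{y}\,\Psi(\overline{R},\overline{\chi}(\cdot,\overline{y}))$. Suppose no finite disjunction of instances $\exists\,\overline{x}\,\Phi(\overline{R},\overline{\xi}(\cdot,\overline{x}))$ followed from it. Then compactness gives a first-order model $\mathcal{M}$, and in $\mathrm{Def}(\mathcal{M})$ both $Q$ and $\neg Q$ lead to contradiction. Here the two definitions are used symmetrically inside that single model. A second compactness step then extracts finitely many $\overline{\chi_j}$ in (\ref{eqn:whattproves2}). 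The paper defines $Q$ by the universal-form formula $\bigwedge_j\forall\,\overline{y}\,\Psi(\overline{R},\overline{\chi_j^{\ast}}(\cdot,\overline{y}))$. In fact $Q$ ends up sandwiched between this conjunction and the disjunction of $\Phi$-instances, so both forms define it.

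You instead stay with Henkin models of $\mathsf{ACA_0^{wo}}+Q(\overline{R})$. You compare such a $\mathcal{B}$ with its definable part $\mathcal{A}$, which is a substructure with the same first-order domain. The $\Pi^1_1$-definition is used only to push $Q$ down to $\mathcal{A}$, and first-order absoluteness lifts the definable witness back up to $\mathcal{B}$.

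What each route buys:
\begin{itemize}
\item Your route needs only one compactness step and yields an existential-form definition. It also meshes naturally with the absoluteness viewpoint of Theorem~\ref{thm:stable}.
\item The paper's route never compares two Henkin models. It needs only that $\mathrm{Def}$ of a model of the first-order well-order schema satisfies $\mathsf{ACA_0^{wo}}$.
\end{itemize}
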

The same result holds for $\mathsf{ACA_0^{-}}$. We prove it for $\mathsf{ACA_0^{wo}}$ because we need this for a later result in this section.
\begin{proof}

Suppose that $Q(\overline{R})$ is definable in $\mathsf{ACA_0^{wo}}$ by a $\Sigma^1_1$-formula $\exists \; \overline{S} \; \Phi(\overline{R},\overline{S})$ where $\Phi(\overline{R},\overline{S})$ has no second-order quantifiers and has only the displayed variables free, which are all second-order. Suppose further that $Q(\overline{R})$ is definable in $\mathsf{ACA_0^{wo}}$ by a $\Pi^1_1$-formula $\forall \; \overline{S^{\prime}} \; \Psi(\overline{R},\overline{S^{\prime}})$ where $\Psi(\overline{R},\overline{S^{\prime}})$ has no second-order quantifiers and has only the displayed variables free, which are all second-order. Suppose that $\overline{R}$ is of type $\overline{n}$, and $\overline{S}$ is of type $\overline{n^{\prime}}$, and $\overline{S^{\prime}}$ is of type $\overline{n^{\prime\prime}}$. 

Let $T_0$ be the first-order theory which schematically says that $\preceq$ is a well-order, and whose signature includes relation symbols corresponding to $\overline{R}$.

If a tuple of first-order formulas $\overline{\xi}$ in this signature is of type $\overline{n^{\prime}}$, then we form $\Phi(\overline{R},\overline{\xi}(\cdot, \overline{x}))$ from $\Phi(\overline{R},\overline{S})$ as in Definition~\ref{defn:mysubstitution}. Likewise, if a tuple of first-order formulas $\overline{\chi}$ in this signature is of type $\overline{n^{\prime\prime}}$, then we form $\Psi(\overline{R},\overline{\chi}(\cdot,\overline{y}))$ from $\Phi(\overline{R},\overline{S^{\prime}})$ as in Definition~\ref{defn:mysubstitution}.

Let $T$ be $T_0$ plus the set of all first-order sentences $\forall \; \overline{y} \; \Psi(\overline{R}, \overline{\chi}(\cdot, \overline{y}))$, as $\overline{\chi}$ ranges over tuples of first-order formulas in the signature of type $\overline{n^{\prime\prime}}$. Then we claim that there is a finite number of tuples of first-order formulas $\overline{\xi_1}, \ldots, \overline{\xi_n}$ in the signature of type $\overline{n^{\prime}}$ such that 
\begin{equation}\label{eqn:whattproves}
T\vdash \bigvee_{i=1}^n \exists \; \overline{x} \; \Phi(\overline{R}, \overline{\xi_i}(\cdot, \overline{x}))
\end{equation}
Suppose not. Then by compactness there is a first-order model $\mathcal{M}$ such that
\begin{equation}
\mathcal{M}\models T_0 \wedge \forall \; \overline{y} \; \Psi(\overline{R}, \overline{\chi}(\cdot, \overline{y})) \wedge \forall \; \overline{x} \; \neg \Phi(\overline{R}, \overline{\xi}(\cdot, \overline{x}))
\end{equation}
for all tuples of first-order formulas $\overline{\chi}$ in the signature of type $\overline{n^{\prime\prime}}$, and all tuples of first-order formulas $\overline{\xi}$ in the signature of type $\overline{n^{\prime}}$. Let $\mathcal{A}$ be the Henkin model of $\mathsf{ACA_0^{wo}}$ such that $A=M$ and $S_n[A]=\mathrm{Def}(M^n)$, where this is the set of subsets of $M^n$ which are definable by a first-order formula in $\mathcal{M}$ with parameters. Then $\overline{R}$, as interpreted in $\mathcal{M}$, determines an element of $S_{\,\overline{n}\,}[A]$, which we likewise refer to simply as $\overline{R}$. There are two cases:
\begin{itemize}[leftmargin=*]
\item First suppose that $\mathcal{A}\models Q(\overline{R})$. Then $\mathcal{A}\models \exists \; \overline{S} \; \Phi(\overline{R},\overline{S})$. But the witnesses $\overline{S}$ are elements of $S_{\,\overline{n^{\prime}}\,}[A] = \mathrm{Def}(M^{\overline{n^{\prime}}})$. But then $\overline{S}=\{\overline{v}: \mathcal{M}\models \overline{\xi}(\overline{v},\overline{a})\}$ for a tuple of first-order formulas $\overline{\xi}(\overline{v},\overline{x})$ in the signature of type $\overline{n^{\prime}}$ and parameters $\overline{a}$ from $M$. But then $\mathcal{A}\models \Phi(\overline{R},\overline{S})$ implies $\mathcal{M}\models \Phi(\overline{R}, \overline{\xi}(\cdot, \overline{a}))$, contradicting that $\mathcal{M}\models \forall \; \overline{x} \; \neg \Phi(\overline{R}, \overline{\xi}(\cdot, \overline{x}))$.
\item Second suppose that $\mathcal{A}\models \neg Q(\overline{R})$. Then $\mathcal{A}\models \exists \; \overline{S^{\prime}} \; \neg \Psi(\overline{R},\overline{S^{\prime}})$. But the witnesses $\overline{S^{\prime}}$ are elements of $S_{\,\overline{n^{\prime\prime}}\,}[A] = \mathrm{Def}(M^{\overline{n^{\prime\prime}}})$. But then $\overline{S^{\prime}}=\{\overline{v} : \mathcal{M}\models \overline{\chi}(\overline{v},\overline{b})\}$ for some tuple of first-order formulas $\overline{\chi}(\overline{v}, \overline{y})$ in the signature of type $\overline{n^{\prime\prime}}$ and parameters $\overline{b}$ from $M$. But then $\mathcal{A}\models \neg\Psi(\overline{R},\overline{S^{\prime}})$ implies $\mathcal{M}\models \neg \Psi(\overline{R}, \overline{\chi}(\cdot, \overline{b}))$, contradicting that $\mathcal{M}\models \forall \; \overline{y} \; \Psi(\overline{R}, \overline{\chi}(\cdot, \overline{y}))$.
\end{itemize}
In either case, we get a contradiction. Hence, rather (\ref{eqn:whattproves}) is true, for  some finite number of tuples of first-order formulas $\overline{\xi_1}, \ldots, \overline{\xi_n}$ in the signature of type $\overline{n^{\prime}}$.

Then, by the definition of $T$ and by compactness, there is a finite number of tuples of first-order formulas $\overline{\chi_1}, \ldots, \overline{\chi_m}$ in the signature of type $\overline{n^{\prime\prime}}$ such that 
\begin{equation}\label{eqn:whattproves2}
T_0\vdash \bigwedge_{j=1}^m \forall \; \overline{y} \; \Psi(\overline{R}, \overline{\chi_j}(\cdot, \overline{y})) \rightarrow  \bigvee_{i=1}^n \exists \; \overline{x} \; \Phi(\overline{R}, \overline{\xi_i}(\cdot, \overline{x}))
\end{equation}
Let $\overline{\xi^{\ast}}$ be the result of replacing the relation symbols $\overline{R}$ by second-order variables $\overline{R}$ in $\overline{\xi}$. Likewise, let $\overline{\chi^{\ast}}$ be the result of replacing the relation symbols $\overline{R}$ by second-order variables $\overline{R}$ in $\overline{\chi}$. Under this replacement, (\ref{eqn:whattproves2}) holds when we further replace provability in first-order logic relative to $T_0$ by provability in second-order logic relative to $\mathsf{ACA_0^{wo}}$. Finally, we claim:
\begin{equation}
\mathsf{ACA_0^{wo}} \vdash \forall \; \overline{R} \; \big(Q(\overline{R}) \leftrightarrow \bigwedge_{j=1}^m \forall \; \overline{y} \; \Psi(\overline{R}, \overline{\chi_j^{\ast}}(\cdot, \overline{y})) \big)
\end{equation}
The forward direction is trivial, using the $\Pi^1_1$-definition of $Q(\overline{R})$ and first-order comprehension. For the reverse direction, we simply use (\ref{eqn:whattproves2}) and the $\Sigma^1_1$-definition of $Q(\overline{R})$ and first-order comprehension.

\end{proof}

For the proof of the next theorem, we just need to recall one elementary definition from first-order model theory (cf. \cite[91]{Hodges1993-ux}):
\begin{defn}\label{defn:skolem} (Definable Skolem functions).

A model $\mathcal{M}$ has \emph{definable Skolem functions} if for every formula $\varphi(\overline{x}, y)$ with $\overline{x}$ having length $n$ there is a definable function $f:M^n\rightarrow M$ such that
\begin{equation*}
\mathcal{M}\models \big(\forall \; \overline{x} \; \exists \; y \; \varphi(\overline{x},y)\big)\rightarrow \big( \forall \; \overline{x} \; \varphi(\overline{x},f(\overline{x}))\big)
\end{equation*}
\end{defn}
The simple example which we use is:
\begin{ex}\label{ex:skolem} (Simple example of definable Skolem functions).

Any model of the first-order theory in a single binary relation $\preceq$ which says schematically that $\preceq$ is a well-order has definable Skolem functions. For, given $\varphi(\overline{x},y)$ one defines $f(\overline{x})=y$ iff $\varphi(\overline{x},y)\wedge \forall \; z\prec y \; \neg \varphi(\overline{x},z)$.
\end{ex}

\begin{rmk} (Extending to obtain definable Skolem functions with Cartesian products as co-domain).

If $\mathcal{M}$ is a first-order structure, then a definable function $f:M^n\rightarrow M^m$ is understood simply to be an $m$-tuple of definable functions $f_1:M^n\rightarrow M$, \ldots, $f_m:M^n\rightarrow M$ defined by $f(x_1, \ldots, x_n)=(f_1(x_1, \ldots, x_n), \ldots, f_m(x_1, \ldots, x_n))$. If a structure $\mathcal{M}$ has definable Skolem functions, then for every formula $\varphi(\overline{x}, \overline{y})$ with $\overline{x}$ having length $n$ and $\overline{y}$ having length $m$ there is a definable function $f:M^n\rightarrow M^m$ such that
\begin{equation*}
\mathcal{M}\models \big(\forall \; \overline{x} \; \exists \; \overline{y} \; \varphi(\overline{x},\overline{y})\big)\rightarrow \big( \forall \; \overline{x} \; \varphi(\overline{x},f(\overline{x}))\big)
\end{equation*}
This is simply by induction on $m\geq 1$. The base case holds by definition. Suppose $\mathcal{M}\models \forall \; \overline{x} \; \exists \; \overline{y} \; \exists \; z \; \varphi(\overline{x},\overline{y},z)$. Then by induction hypothesis there is definable $f:M^n\rightarrow M^m$ such that $\mathcal{M}\models \forall \; \overline{x} \; \exists \; z \; \varphi(\overline{x},f(\overline{x}),z)$. Then by the case $m=1$ there is a definable function $g:M^n\rightarrow M$ such that $\mathcal{M}\models \forall \; \overline{x} \; \varphi(\overline{x},f(\overline{x}),g(\overline{x}))$.
\end{rmk}

The following conservation result was originally developed by Barwise-Schlipf \cite{Barwise1975-xg} in the setting of arithmetic (see \cite[Lemma IX.4.3]{Simpson2009-ra}). It was deployed in the context of abstraction principles by Ferreira-Wehmeier \cite{Ferreira2002-xl} and later in Walsh \cite{walsh-comparing}. The proof is similar to the proof of \cite[Theorem 63(iv)]{walsh-comparing}. This proof does not obviously go through if one replaces $\mathsf{\Sigma^1_1\mbox{-}AC_0^{wo}}$ by $\mathsf{\Sigma^1_1\mbox{-}AC_0^{-}}$ and $\mathsf{ACA_0^{wo}}$ by $\mathsf{ACA_0^{-}}$, since this would remove the definable Skolem functions.

\begin{thm}\label{thm:conservative} (Conservation of $\mathsf{\Sigma^1_1\mbox{-}AC_0^{wo}}$ over $\mathsf{ACA_0^{wo}}$).

$\mathsf{\Sigma^1_1\mbox{-}AC_0^{wo}}$ is $\Pi^1_2$-conservative over $\mathsf{ACA_0^{wo}}$.
\end{thm}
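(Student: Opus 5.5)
The plan is to follow the Barwise--Schlipf strategy: show that every countable model of $\mathsf{ACA_0^{wo}}$ can be expanded, or rather shrunk in its second-order part, to a model of $\mathsf{\Sigma^1_1\mbox{-}AC_0^{wo}}$ that has the same first-order part and contains the given relations. Suppose a $\Pi^1_2$-sentence $\forall\,\overline{R}\;\exists\,\overline{S}\;\theta(\overline{R},\overline{S})$, with $\theta$ first-order, is provable in $\mathsf{\Sigma^1_1\mbox{-}AC_0^{wo}}$ but not in $\mathsf{ACA_0^{wo}}$. By completeness, which is available through Proposition~\ref{prop:iso}, there is a Henkin model $\mathcal{B}\models \mathsf{ACA_0^{wo}}$ and a tuple $\overline{R}$ from $\mathcal{B}$ with $\mathcal{B}\models\forall\,\overline{S}\;\neg\theta(\overline{R},\overline{S})$. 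Let $\mathcal{M}$ be the first-order structure $(B,\preceq^{\mathcal{B}},\overline{R})$. Then $\mathcal{M}$ satisfies schematically that $\preceq$ is a well-order, because every first-order definable subset of $\mathcal{M}$ lies in $S_1[B]$ by arithmetical comprehension. Form the Henkin model $\mathcal{A}$ with $A=B$ and $S_n[A]=\mathrm{Def}(M^n)$, the sets first-order definable in $\mathcal{M}$ with parameters. Since $\mathrm{Def}(M^n)\subseteq S_n[B]$, the model $\mathcal{A}$ is a substructure of $\mathcal{B}$ with the same first-order domain. By Theorem~\ref{thm:stable} for first-order formulas (with $X=\mathbb{V}$, which lies in $S_1[A]$), $\mathcal{A}\models\forall\,\overline{S}\;\neg\theta(\overline{R},\overline{S})$. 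So it suffices to show that $\mathcal{A}\models\mathsf{\Sigma^1_1\mbox{-}AC_0^{wo}}$, which yields the contradiction.

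First I check $\mathsf{ACA_0^{wo}}$ in $\mathcal{A}$. Comprehension for second-order-quantifier-free formulas holds because such a formula with parameters from $\mathrm{Def}$ unravels into a first-order formula of $\mathcal{M}$ with parameters, via the substitution of Definition~\ref{defn:mysubstitution}. The well-ordering axiom holds because every $X\in S_1[A]$ is definable and $\mathcal{M}$ satisfies the well-order schema.

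The main step is the $\Sigma^1_1$-choice schema~(\ref{eqn:choice}). Suppose $\mathcal{A}\models\forall\,\overline{x}\;\exists\,R\;\exists\,\overline{S}\;\varphi(\overline{x},R,\overline{S})$ with $\varphi$ first-order and parameters from $\mathrm{Def}$. Then for each $\overline{x}$ there are witnesses defined by formulas with parameters. The difficulty is that different $\overline{x}$ may need different defining formulas. This is exactly where the failure for $\mathsf{ACA_0^{-}}$ comes from, so definable Skolem functions are the key tool.

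To overcome this, I first make $\mathcal{M}$ recursively saturated. More precisely, I start from a countable recursively saturated elementary extension of $\mathcal{M}$, as in the proof of Theorem~\ref{thm:fefermanesq}; the failure of $\forall\,\overline{S}\;\neg\theta$ transfers because it is expressed by a first-order scheme over $\mathcal{M}$. For fixed formulas $\xi(\overline{v},\overline{z})$ for $R$ and $\overline{\zeta}$ for $\overline{S}$, the set of $\overline{x}$ that have some parameter tuple $\overline{a}$ with $\mathcal{M}\models\varphi(\overline{x},\xi(\cdot,\overline{a}),\overline{\zeta}(\cdot,\overline{a}))$ is definable. If no finite set of formula-choices covers every $\overline{x}$, then the recursive type in $\overline{x}$ asserting ``no choice of formulas works'' is finitely satisfiable. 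By recursive saturation it is realized by some $\overline{x}_0$ that has no witness in $\mathrm{Def}$, contradicting the hypothesis. So finitely many formulas $\xi_1,\dots,\xi_k$ suffice, and these can be merged into one formula by case distinction using parameters. Example~\ref{ex:skolem} and the subsequent Remark then give a definable function $\overline{x}\mapsto\overline{a}(\overline{x})$ picking the $\preceq$-least parameter tuple. Setting $R'=\{(\overline{x},\overline{v}):\xi(\overline{v},\overline{a}(\overline{x}))\}$ produces a single definable $R'$ with $R'[\overline{x}]$ a witness for every $\overline{x}$, as (\ref{eqn:choice}) requires. I expect this uniformization step, the compactness and saturation argument that reduces to finitely many defining formulas, to be the main obstacle.
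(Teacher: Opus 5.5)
Your proposal is correct and is essentially the paper's Barwise--Schlipf argument. You pass to a recursively saturated elementary extension $\mathcal{N}$, take $\mathrm{Def}(N^n)$ as the second-order part, transfer the failure of the $\Pi^1_2$-sentence as a first-order scheme, and verify $\Sigma^1_1$-choice using recursive saturation to reduce to finitely many defining formulas and the well-order's definable Skolem functions to uniformize the parameters. The differences are minor: you work in the finite signature $(\preceq,\overline{R})$ rather than naming every element of $S_n[A]$, which keeps the language recursive so that recursive saturation is unproblematic, and you merge the finitely many formulas by a case split rather than through the paper's definable partition $P_1,\ldots,P_\ell$.
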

\begin{proof}
Suppose that we have $\Pi^1_2$-sentence $\forall \; \overline{R} \; \exists \; \overline{R^{\prime}} \; \Phi(\overline{R}, \overline{R^{\prime}})$ where $\Phi(\overline{R}, \overline{R^{\prime}})$ contains no second-order quantifiers, and where $\overline{R}$ is of type $\overline{n}$, and $\overline{R^{\prime}}$ is of type $\overline{n^{\prime}}$. Suppose that $\mathsf{\Sigma^1_1\mbox{-}AC_0^{wo}}$ proves the sentence but  $\mathsf{ACA_0^{wo}}$ does not. Then there is a Henkin model $\mathcal{A}$ of $\mathsf{ACA_0^{wo}}$  such that $\mathcal{A}\models \forall \; \overline{R^{\prime}} \; \neg \Phi(\overline{R}, \overline{R^{\prime}})$, for some $\overline{R}$ in $\mathcal{A}$. Since each finite model of $\mathsf{ACA_0^{wo}}$ is standard and hence a model of $\mathsf{\Sigma^1_1\mbox{-}AC_0^{wo}}$, we must have that $\mathcal{A}$ is infinite. Without loss of generality, $\mathcal{A}$ is countably infinite. 

Let $\mathcal{M}$ be the associated first-order model with domain $A$ and with $n$-ary constant symbols interpreting each of the elements of $S_n[A]$. This includes the binary relation $\preceq$ which witnesses that $\mathcal{A}$ satisfies the axioms of a well-order. By first-order comprehension in $\mathcal{A}$, one has $S_n[A]=\mathrm{Def}(M^n)$. Hence for a tuple of first-order formulas $\overline{\xi}$ in this signature of type $\overline{n^{\prime}}$, one has $\mathcal{M}\models \forall \; \overline{x} \; \neg \Phi(\overline{R},\overline{\xi}(\cdot, \overline{x}))$. Further, $\mathcal{M}$ satisfies the schematic first-order theory expressing that $\preceq$ is a well-order in its signature. Because of this, $\mathcal{M}$ has definable skolem functions. 

Since $\mathcal{M}$ is infinite, choose a recursively saturated elementary extension $\mathcal{N}$ of $\mathcal{M}$. Let $\mathcal{B}$ be the Henkin model of $\mathsf{ACA_0^{wo}}$ with $B=N$ and $S_n[B]=\mathrm{Def}(N^n)$. Then $\mathcal{B}\models \forall \; \overline{R^{\prime}} \; \neg \Phi(\overline{R}, \overline{R^{\prime}})$, where $\overline{R}$ is the element of $S_{\,\overline{n}\,}[B]$ corresponding to the interpretation of the eponymous constants $\overline{R}$ as interpreted by $\mathcal{N}$. It suffices to show that $\mathcal{B}$ is a model of $\mathsf{\Sigma^1_1\mbox{-}AC_0^{wo}}$. For, then it would be a model of $\mathsf{\Sigma^1_1\mbox{-}AC_0^{wo}}$ plus the negation of the $\Pi^1_2$ sentence $\forall \; \overline{R} \; \exists \; \overline{R^{\prime}} \; \Phi(\overline{R}, \overline{R^{\prime}})$.

To show that $\mathcal{B}$ is a model of $\mathsf{\Sigma^1_1\mbox{-}AC_0^{wo}}$, suppose $\mathcal{B}\models \forall \; \overline{x} \; \exists \; \overline{R} \; \Psi(\overline{x}, \overline{R})$, where $\Psi(\overline{x}, \overline{R})$ has no second-order quantifiers, and where $\overline{R}$ has type $\overline{n}$. Then $\mathcal{N}\models \forall \; \overline{x} \; \bigvee_{\overline{\xi}} \;\exists \; \overline{y} \; \Psi(\overline{x}, \overline{\xi}(\cdot, \overline{y}))$, where $\overline{\xi}$ ranges over all tuples of first-order formulas in the signature of type $\overline{n}$. Then $\mathcal{N}$ does not realize the recursive class of formulas $\Gamma(\overline{x})$, which consists of all formulas $\neg \exists \; \overline{y} \; \Psi(\overline{x}, \overline{\xi}(\cdot, \overline{y}))$ as $\overline{\xi}$ ranges over first-order formulas in the signature of type $\overline{n}$. Since $\mathcal{N}$ is recursively saturated, $\Gamma(\overline{x})$ is not a type, i.e. not finitely realized in~$\mathcal{N}$. Hence, choose a finite sequence of formulas $\overline{\xi_1}, \ldots, \overline{\xi_{\ell}}$ of type $\overline{n}$ such that  $\mathcal{N}\models \forall \; \overline{x} \; \bigvee_{i=1}^{\ell} \;\exists \; \overline{y} \; \Psi(\overline{x}, \overline{\xi_i}(\cdot, \overline{y}))$. By padding with extra dummy variables if need be, we can assume that the $\overline{y}$ in each of these is of the same length. Then
\begin{equation}
\mathcal{N}\models \forall \; \overline{x} \; \exists \; \overline{y} \; \bigg(\bigvee_{i=1}^{\ell} \;\Psi(\overline{x}, \overline{\xi_i}(\cdot, \overline{y})) \wedge \bigwedge_{j=1}^{i-1} \neg  \exists \; \overline{z} \; \Psi(\overline{x}, \overline{\xi_j}(\cdot, \overline{z}))  \bigg)
\end{equation}
Since $\mathcal{N}$ has definable Skolem functions, there is function $f:N^{\left|\overline{x}\right|}\rightarrow N^{\left|\overline{y}\right|}$ which is $\mathcal{N}$-definable such that:
\begin{equation}
\mathcal{N}\models \forall \; \overline{x} \; \bigg(\bigvee_{i=1}^{\ell} \;\Psi(\overline{x}, \overline{\xi_i}(\cdot, \overline{f}(\overline{x}))) \wedge \bigwedge_{j=1}^{i-1} \neg  \exists \; \overline{z} \; \Psi(\overline{x}, \overline{\xi_j}(\cdot, \overline{z}))\bigg)
\end{equation}
Then we define a partition of $N^{\left|\overline{x}\right|}$, into definable subsets $P_1, \ldots, P_{\ell}$ as follows:
\begin{equation}
P_i = \{ \overline{x}: \mathcal{N}\models  \Psi(\overline{x}, \overline{\xi_i}(\cdot, \overline{f}(\overline{x}))) \wedge \bigwedge_{j=1}^{i-1} \neg  \exists \; \overline{z} \; \Psi(\overline{x}, \overline{\xi_j}(\cdot, \overline{z}))\}
\end{equation}
Then for all $1\leq j\leq \left|\overline{n}\right|$ we define:
\begin{equation}
R_j^{\prime} = \{ \overline{x}\,\overline{v_j} : \mathcal{N}\models \bigwedge_{1\leq i\leq \ell} \big(P_i\overline{x}\rightarrow \xi_{i,j}(\overline{v_j},f(\overline{x}))\big)\}
\end{equation}
This implies that for all $1\leq i\leq \ell$ and $1\leq j\leq \left|\overline{n}\right|$ we have:
\begin{equation}
\overline{x}\in P_i \Longrightarrow R_j^{\prime}[\overline{x}] = \{\overline{v_j}: \mathcal{N}\models \xi_{i,j}(\overline{v_j},f_j(\overline{x}))\}
\end{equation}
Then one has $\mathcal{B}\models \exists \; R_1^{\prime}, \ldots, R_k^{\prime} \; \forall \; \overline{x} \; \Psi(\overline{x}, R_1^{\prime}[\overline{x}], \ldots, R_k^{\prime}[\overline{x}])$, or in abbreviated form $\mathcal{B}\models \exists \; \overline{R^{\prime}} \; \forall \; \overline{x} \; \Psi(\overline{x}, \overline{R^{\prime}}[\overline{x}])$.
\end{proof}

This in place, we now prove Theorems~\ref{thm:fodefinable}-\ref{thm:fodefinabletwo} from \S\ref{sec:lower}:

\thmfodefinable*
\begin{proof}
Suppose that $Q$ is a generalized quantifier which is $\Delta^1_1$-definable relative to $\mathsf{\Sigma^1_1\mbox{-}AC_0^{wo}}$. Suppose that the $\Sigma^1_1$-formula is $\exists \; \overline{S} \; \Phi(\overline{R},\overline{S})$ and the $\Pi^1_1$-formula is  
$\forall \; \overline{S^{\prime}} \; \Psi(\overline{R},\overline{S^{\prime}})$, where $\Phi, \Psi$ have no second-order quantifiers. Then $\mathsf{\Sigma^1_1\mbox{-}AC_0^{wo}}$ proves each of the following:
\begin{align}
& \forall \; \overline{R} \; \big(\big(\exists \; \overline{S} \; \Phi(\overline{R}, \overline{S})\big)\rightarrow \big(\forall \; \overline{S^{\prime}} \; \Psi(\overline{R}, \overline{S^{\prime}})\big)  \big) \notag \\
& \forall \; \overline{R} \; \big(  \big(\forall \; \overline{S^{\prime}} \; \Psi(\overline{R}, \overline{S^{\prime}})\big) \rightarrow \big(\exists \; \overline{S} \; \Phi(\overline{R}, \overline{S})\big) \big) \label{eqn:theequiv}
\end{align}
We may assume that $\overline{S}$ and $\overline{S^{\prime}}$ are distinct, and that $\overline{S}$ does not occur free or bound in $\Psi(\overline{R}, \overline{S^{\prime}})$, and similarly $\overline{S^{\prime}}$ does not occur free or bound in $\Phi(\overline{R}, \overline{S})$. Then by elementary manipulation of the quantifiers this is equivalent to the following, the first of which is $\Pi^1_1$ and the second of which is $\Pi^1_2$:
\begin{align}
& \forall \; \overline{R} \; \forall \; \overline{S} \;\forall \; \overline{S^{\prime}} \; \big(\neg \Phi(\overline{R}, \overline{S})\vee \Psi(\overline{R}, \overline{S^{\prime}})\big) \notag \\
& \forall \; \overline{R} \; \exists \; \overline{S} \; \exists \; \overline{S^{\prime}}   \big(\neg \Psi(\overline{R}, \overline{S^{\prime}})\vee \Phi(\overline{R}, \overline{S})\big) \label{eqn:theequiv:2}
\end{align}
Since $\mathsf{\Sigma^1_1\mbox{-}AC_0^{wo}}$ is $\Pi^1_2$-conservative over $\mathsf{ACA_0^{wo}}$ by Theorem~\ref{thm:conservative}, one has that $\mathsf{ACA_0^{wo}}$ proves (\ref{eqn:theequiv:2}), and hence also (\ref{eqn:theequiv}). Let $Q^{\prime}(\overline{R})$ be the $\Sigma^1_1$-formula $\exists \; \overline{S} \; \Phi(\overline{R},\overline{S})$. Then $Q^{\prime}(\overline{R})$  is $\Delta^1_1$-definable relative to $\mathsf{ACA_0^{wo}}$. Then by Theorem~\ref{thm:fromdeltatoaca}, we have that $Q^{\prime}(\overline{R})$ is first-order definable relative to $\mathsf{ACA_0^{wo}}$. Then since $\mathsf{\Sigma^1_1\mbox{-}AC_0^{wo}}$ proves that $Q(\overline{R})$ and $Q^{\prime}(\overline{R})$ are equivalent, we have that  $Q(\overline{R})$ is first-order definable relative to  $\mathsf{\Sigma^1_1\mbox{-}AC_0^{wo}}$.

\end{proof}

\thmfodefinabletwo*
\begin{proof}
This follows from Theorem~\ref{thm:fefermanesq} and Theorem~\ref{thm:fodefinable} and Theorem~\ref{thm:fefermanesqfourpre}.
\end{proof}

\section{Proof of Theorem~\ref{thmmostdefinabel}}\label{sec:cardinalityqs}

Our goal in this section is to prove Theorem~\ref{thmmostdefinabel}. 

First we introduce some notation specific to $\scompfin$, which recall was defined at the end of Definition~\ref{defn:two}.

We define $X\upharpoonright a = \{x\in X: x\prec a\}$, and define $[0,a)=\{x: x\prec a\}$, both of which exist by first-order comprehension.  Define $X\cong Y$ if there is order isomorphism from $(X,\preceq)$ to $(Y,\preceq)$, which is a $\Sigma^1_1$-notion. By an easy inductive argument, order isomorphisms when they exist are unique.

An axiom of $\scompfin$ is that the universe $\mathbb{V}$ is Dedekind finite (cf. Definition~\ref{defn:cardinality} for the definition of Dedekind finite). It follows from this that every $X$ is Dedekind finite, since any injection $f:X\rightarrow X$ which is not a surjection can be extended to an injection $g:\mathbb{V}\rightarrow \mathbb{V}$ which is not a surjection by setting $g=f$ on $X$ and $g$ equal to the identity on $\mathbb{V}\setminus X$.

Then we have:
\begin{prop}\label{prop:t0proves} (Order isomorphisms in $\scompfin$).

$\scompfin$ proves that for all $X$ either there is $c$ such that $X \cong [0,c)$, or $X=\mathbb{V}$, but not both.
\end{prop}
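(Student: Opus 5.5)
The plan is to build, uniformly in $X$, the canonical order-isomorphism from $X$ onto an initial segment of $(\mathbb{V},\preceq)$ by one application of $\Sigma^1_1$-comprehension. Then I will use Dedekind finitude of $\mathbb{V}$ to pin down which case occurs and to rule out overlap.

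\emph{Exclusivity.} I would dispose of this first. Suppose $X=\mathbb{V}$ and $f$ is an order isomorphism from $(\mathbb{V},\preceq)$ to $([0,c),\preceq)$. Then $f:\mathbb{V}\rightarrow\mathbb{V}$ is injective, and $c$ is not in its range. So $\mathbb{V}$ is Dedekind infinite, contradicting the axiom of $\scompfin$.

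\emph{The comparison relation.} Fix $X$. By $\Sigma^1_1$-comprehension, form
\[
G=\{(x,y): Xx \wedge X\upharpoonright x\cong [0,y)\}.
\]
This is legitimate because $\cong$ is $\Sigma^1_1$. By the uniqueness of order isomorphisms noted above, $G$ is a function. It is injective, since $[0,y)\cong[0,y')$ forces $y=y'$ by uniqueness applied to the identity. It is also order preserving: if $x\prec x'$ are both in its domain, restricting the isomorphism for $X\upharpoonright x'$ to $X\upharpoonright x$ shows that $G(x)$ is the image of $x$. For the same reason, the range of $G$ is downward closed under $\preceq$.

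Next I would show that the domain of $G$ is all of $X$. If not, the well-ordering axiom, applied to the first-order-definable set $X\setminus\mathrm{dom}(G)$, gives a least $x$ in $X$ outside the domain. On $X\upharpoonright x$, the restriction $G\upharpoonright(X\upharpoonright x)$ exists by first-order comprehension and is an order isomorphism onto a downward closed set $I$.

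Any downward closed $I$ is either $\mathbb{V}$ or $[0,c)$, where $c$ is the least element of $\mathbb{V}\setminus I$. If $I=[0,c)$, then $(x,c)\in G$, a contradiction. If $I=\mathbb{V}$, then the inverse map sends $\mathbb{V}$ injectively into $X\upharpoonright x$, which omits $x$. That contradicts Dedekind finitude.

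\emph{Conclusion.} So $G$ is an order isomorphism from $X$ onto a downward closed $I$. If $I=[0,c)$, we are in the first case. If $I=\mathbb{V}$, then $G^{-1}:\mathbb{V}\rightarrow\mathbb{V}$ is an injection with range $X$. By Dedekind finitude it must be surjective, so $X=\mathbb{V}$.

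The main obstacle is the least-failure step. There one must check carefully that the restriction of $G$ really is an isomorphism onto a downward closed set, which uses uniqueness of isomorphisms to make the pieces cohere. One must also make sure that every comprehension instance used is either first-order or the single $\Sigma^1_1$ instance defining $G$.
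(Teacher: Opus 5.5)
Your argument is correct, but it takes a different route from the paper's.

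\emph{The paper's route.} It first extracts the successor structure of a Dedekind-finite well-order: every nonempty set has a $\preceq$-maximum, and every element is $0$ or some $b+1$, which gives induction along successors. It then proves, by successor induction on the $\Sigma^1_1$ property ``there is $c\preceq a$ with $X\upharpoonright a\cong[0,c)$'', that each initial piece of $X$ is isomorphic to an initial segment. It finishes with a case split at the maximum element of $\mathbb{V}$. Dedekind finiteness is used throughout.

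\emph{Your route.} You run the classical comparison-of-well-orders argument. One $\Sigma^1_1$-comprehension instance assembles all the local isomorphisms into a single map $G$. A least-failure argument, using the well-ordering axiom directly, shows $\mathrm{dom}(G)=X$, with no need for maxima or successors.

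\emph{What each buys.} Your version is more modular and more general. In the least-failure step you could exclude $I=\mathbb{V}$ by the usual inflationary lemma instead of Dedekind finiteness. The lemma says that an order-preserving injection $h$ of a set $D$ into itself satisfies $h(z)\succeq z$ for all $z\in D$. With that change, your argument shows in $\mathsf{\Sigma^1_1\mbox{-}CA_0^{wo}}$ that every $X$ is isomorphic to some $[0,c)$ or to $\mathbb{V}$. Finiteness is then needed only to turn $X\cong\mathbb{V}$ into $X=\mathbb{V}$. The paper's route, in exchange, stays closer to finite intuition by building the isomorphism one successor step at a time.

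\emph{One small repair.} Your justifications for functionality and injectivity of $G$ are mislabeled. Functionality is exactly the claim that $[0,y)\cong[0,y')$ implies $y=y'$. This is not literally an instance of uniqueness of isomorphisms, since the identity maps $[0,y)$ to itself, not to $[0,y')$. Instead, for $y\prec y'$, an isomorphism from $[0,y')$ onto $[0,y)$ would be an injective non-surjective self-map of $[0,y')$. That contradicts Dedekind finiteness, or alternatively the inflationary lemma. Injectivity then follows from the strict order preservation you establish next.
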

\begin{proof}
We work in $\scompfin$. In this proof, we use $0$ as an abbreviation for the least element, and we use $b+1$ as an abbreviation for the immediate successor of $b$ when it exists. Using the well-ordering, note that $b+1$ exists whenever there is $c\succ b$.

We make four preliminary observations.

First, since the universe is Dedekind finite, every non-empty set $X$ has a maximal element. For, otherwise, for all $a$ in $X$ there is $b$ in $X$ with $b\succ a$. Hence, for all $a$ in $X$ there is $\preceq$-least $b$ in $X$ with $b\succ a$. Let $R$ be the binary relation which collects together pairs $(a,b)$ in $X^2$ such that $b\succ a$ and $b$ is least with this property, and note that $R$ exists by first-order comprehension. Then $R$ is the graph of an injective non-surjective function $f:X\rightarrow X$:
\begin{itemize}[leftmargin=*]
    \item It is non-surjective since $0$ is not in its range.
    \item It is injective since $a\prec b$  with both $a,b$ in $X$ implies $f(a)\preceq b \prec f(b)$. 
\end{itemize}

Second, since the universe $\mathbb{V}$ is Dedekind finite, every $a$ is either $0$ or $a=b+1$ for some $b$. This is trivial in the case that $\mathbb{V}=\{0\}$. Hence we suppose that we are not in this case. Let $X=\{a: a=0\vee \exists \; b \; a=b+1\}$, which exists by first-order comprehension. By induction we show that $Xa$ holds for all $a$. Suppose not. Then there is some $\preceq$-least $a$ for which it does not hold. Then $a$ cannot be zero and $a$ is not the immediate successor of anything. Then $[0,a)$ is non-empty and so by the previous paragraph has a greatest element, call it $b$. Since $b\prec a$, one has both that $b+1$ exists and $b+1\preceq a$. But by maximality of $b$, we have that $b+1$ cannot be in $[0,a)$, and so $a\preceq b+1$. Then $a=b+1$, a contradiction.

Third, to show that all elements are in a set $Y$, it suffices to show both of the following:
\begin{itemize}[leftmargin=*]
    \item \emph{Base case}: $Y(0)$
    \item \emph{Induction step}: if $Y(b)$ and $b+1$ exists then $Y(b+1)$.
\end{itemize}
For, suppose that these two things hold and yet some $a$ satisfies $\neg Y(a)$. Then choose $\preceq$-least $a$ such that $\neg Y(a)$. By the previous paragraph $a=0$ or $a=b+1$ for some $b$. But if the former, then we contradict the base case, and if the latter then we contradict the induction step.

Fourth, for every $X$ and every $a$ there is a $c\preceq a$ such that $X\upharpoonright a \cong [0,c)$. For, fix $X$ and do an induction on $a$ with respect to the $\Sigma^1_1$-definable property of there being a $c\preceq a$ with $X\upharpoonright a \cong [0,c)$:
\begin{itemize}[leftmargin=*]
    \item If $a$ is $0$, we take $c=a$. 
    \item If $a=b+1$ and $X\upharpoonright b\cong [0,d)$ for some $d\preceq b$, then if $b$ in $X$ we have $X\upharpoonright a \cong [0,d+1)$ (note that $d+1$ exists since $d\preceq b$ and $b+1$ exists); and if $b$ not in $X$ we have $X\upharpoonright a \cong [0,d)$.
\end{itemize}

We now prove that for every $X$, either there is $c$ such that $X \cong [0,c)$ or $X=\mathbb{V}$. Let $a$ be the maximal element of $\mathbb{V}$. Then $X\upharpoonright a\cong [0,c)$ for some $c\preceq a$, by our previous work. There are then a couple of cases to consider.
\begin{itemize}[leftmargin=*]
    \item  If $a$ not in $X$, then $X\cong [0,c)$.
    \item  If $a$ in $X$, then consider two cases: $c\prec a$ or $c=a$.
\begin{itemize}[leftmargin=*]
    \item If $c\prec a$, then $c+1$ exists and $X\cong [0,c+1)$.
    \item If $c=a$, then we can extend an order isomorphism $f:X\upharpoonright a\rightarrow [0,c)$ to an order isomorphism $g:X\rightarrow \mathbb{V}$ by sending $g(a)=c$. Then by considering its inverse $g^{-1}:\mathbb{V}\rightarrow X$, we have that $g^{-1}:\mathbb{V}\rightarrow \mathbb{V}$ is injective and hence surjective, and thus $X=\mathbb{V}$.
\end{itemize}    
\end{itemize}

Finally, one cannot have both $X=\mathbb{V}$ and $X \cong [0,c)$ for some $c$. For, any order isomorphism $f:\mathbb{V}\rightarrow [0,c)$ would be an injective non-surjective function $f:\mathbb{V}\rightarrow \mathbb{V}$.
\end{proof}

This implies:
\begin{prop}\label{prop:t0proves3} (A cardinality dichotomy in $\scompfin$).

$\scompfin$ proves that for all $X,Y$ one has that there is an injection from $X$ to $Y$, or an injection which is not a surjection from $Y$ to $X$, but not both.
\end{prop}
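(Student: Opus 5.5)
We work in $\scompfin$ and reduce everything to the order-types given by Proposition~\ref{prop:t0proves}. Fix $X,Y$. By Proposition~\ref{prop:t0proves}, each of $X,Y$ is either order-isomorphic to some initial segment $[0,c)$ or equal to $\mathbb{V}$. Attach to each set a ``length'': $c$ in the first case, and a formal top element $\infty$ (above all elements) in the second. The plan is to show that there is an injection $X\to Y$ iff $\mathrm{len}(X)\preceq \mathrm{len}(Y)$. We will also show that there is a non-surjective injection $Y\to X$ iff $\mathrm{len}(Y)\prec\mathrm{len}(X)$. Linearity of $\preceq$ (extended by $\infty$) then yields exactly one of the two alternatives.

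For existence, first suppose $\mathrm{len}(X)\preceq\mathrm{len}(Y)$. Composing the order isomorphism $X\cong[0,c)$ with the inclusion $[0,c)\subseteq[0,d)$ and the inverse isomorphism $[0,d)\cong Y$ gives an injection; when $Y=\mathbb{V}$ we use $X\cong[0,c)\subseteq\mathbb{V}$ directly. These compositions and inverses exist by first-order comprehension from the isomorphism graphs. Next suppose $\mathrm{len}(Y)\prec\mathrm{len}(X)$, say $Y\cong[0,d)$ with $d\prec\mathrm{len}(X)$. The same composition gives an injection $Y\to X$ whose range is the image of $[0,d)$ under $[0,\mathrm{len}(X))\cong X$. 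This range misses the image of $d$, so the injection is not surjective.

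For exclusivity, suppose we had both an injection $g:X\to Y$ and a non-surjective injection $h:Y\to X$. Then $h\circ g:X\to X$ is injective. It is also non-surjective, since a point of $X$ outside the range of $h$ is outside the range of $h\circ g$. So $X$ is Dedekind infinite. This contradicts the observation, made before Proposition~\ref{prop:t0proves}, that every set is Dedekind finite in $\scompfin$. This step is easy and in fact makes the length comparison unnecessary for the ``not both'' half.

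The main obstacle is the ``at least one'' half: we must be sure that the case split on lengths is exhaustive and that the comparisons are legitimate. This comes down to comparing elements $c,d$ via linearity and treating the $\mathbb{V}$ cases separately. If both sets equal $\mathbb{V}$, the identity is an injection. If $X\cong[0,c)$ and $Y=\mathbb{V}$, we inject into $\mathbb{V}$. If $X=\mathbb{V}$ and $Y\cong[0,d)$, we use $Y\cong[0,d)\subsetneq\mathbb{V}$, where $d\notin[0,d)$ witnesses non-surjectivity. Finally, we need that the relevant isomorphisms actually exist as second-order objects in a way usable inside the theory. The $\Sigma^1_1$-comprehension of $\scompfin$ (or mere first-order comprehension applied to the given graphs) suffices for forming compositions, inverses and restrictions.
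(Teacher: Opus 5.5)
Your proposal is correct and follows essentially the paper's own proof. Exclusivity comes from $h\circ g$ being an injective non-surjection on $X$, contradicting Dedekind finiteness. Existence comes from Proposition~\ref{prop:t0proves}, with a case split on whether $X$ or $Y$ equals $\mathbb{V}$ and a comparison of $c$ and $d$ when neither does. Your ``length with $\infty$'' is only a repackaging of that case split, and where you use $[0,d)\subsetneq\mathbb{V}$ the paper uses the inclusion $Y\subsetneq\mathbb{V}$, which is an inessential difference.
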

\begin{proof}
One cannot have both, since if $f:X\rightarrow Y$ is an injection and $g:Y\rightarrow X$ is an injection which is not a surjection, then $g\circ f:X\rightarrow X$ is an injection which is not a surjection. 

If $Y$ is $\mathbb{V}$, then we can satisfy the first disjunct with the identity map. If $Y$ is not $\mathbb{V}$ and $X$ is $\mathbb{V}$, then we can satisfy the second disjunct with the identity map.

Suppose now that both $X,Y$ are not $\mathbb{V}$. By Proposition~\ref{prop:t0proves}, we have that $X\cong [0,c)$ and $Y\cong [0,d)$ for some $c,d$, with witnessing order isomorphisms $f,g$. If $c\preceq d$, then $g^{-1}\circ f$ is an injection from $X$ to $Y$. If $d\prec c$, then $f^{-1}\circ g$ is an injection which is not a surjection from $Y$ to $X$; in particular, a point in $X$ which is not in the range of $f^{-1}\circ g$ is the point $x$ of $X$ with $f(x)=d$.
\end{proof}

\begin{prop}\label{prop:t0proves4} (Equivalent characterizations of strict less-than of cardinality in $\scompfin$).

$\scompfin$ proves that for all $X,Y$ the following are equivalent:
\begin{enumerate}[leftmargin=*]
    \item\label{prop:t0proves41} There is an injection $f:X\rightarrow Y$ but there is no injection $g:Y\rightarrow X$.
    \item\label{prop:t0proves42} There is an injection $f:X\rightarrow Y$ which is not a surjection.
    \item\label{prop:t0proves43} There is no surjection $h:X\rightarrow Y$.
\end{enumerate}
\end{prop}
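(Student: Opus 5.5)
The plan is to prove the cycle (\ref{prop:t0proves41})$\Leftrightarrow$(\ref{prop:t0proves42}) and (\ref{prop:t0proves42})$\Leftrightarrow$(\ref{prop:t0proves43}), working inside $\scompfin$ and leaning on the dichotomy of Proposition~\ref{prop:t0proves3} together with the Dedekind finitude of every set (which was derived just before Proposition~\ref{prop:t0proves} from the Dedekind finitude of $\mathbb{V}$).

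For (\ref{prop:t0proves41})$\Rightarrow$(\ref{prop:t0proves42}): given an injection $f:X\rightarrow Y$, suppose $f$ were surjective. Then $f$ is a bijection, and its inverse (which exists by first-order comprehension as the converse relation) is an injection $Y\rightarrow X$, contradicting (\ref{prop:t0proves41}). For (\ref{prop:t0proves42})$\Rightarrow$(\ref{prop:t0proves41}): the injection is given. If there were an injection $g:Y\rightarrow X$, then, exactly as in the first paragraph of the proof of Proposition~\ref{prop:t0proves3}, $f\circ g:Y\rightarrow Y$ would be injective. It would also be non-surjective, since its range lies inside the range of $f$. This contradicts the Dedekind finitude of $Y$.

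For (\ref{prop:t0proves42})$\Rightarrow$(\ref{prop:t0proves43}): suppose $f:X\rightarrow Y$ is a non-surjective injection and $h:X\rightarrow Y$ is a surjection. From $h$ we build an injection $s:Y\rightarrow X$ by letting $s(y)$ be the $\preceq$-least $x$ with $h(x)=y$. This is first-order definable from $h$ and $\preceq$, hence exists by first-order comprehension, and it is injective because $h\circ s=\mathrm{id}_Y$. But an injection $Y\rightarrow X$ contradicts (\ref{prop:t0proves41}), which we already know is equivalent to (\ref{prop:t0proves42}).

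For (\ref{prop:t0proves43})$\Rightarrow$(\ref{prop:t0proves42}), argue by contraposition. Assume there is no non-surjective injection $X\rightarrow Y$. By Proposition~\ref{prop:t0proves3} applied to the pair $(Y,X)$, either there is an injection $g:Y\rightarrow X$, or there is a non-surjective injection $X\rightarrow Y$; the latter is excluded, so $g$ exists. If $Y$ is nonempty, fix some $y_0\in Y$ and define $h:X\rightarrow Y$ by $h(x)=g^{-1}(x)$ for $x$ in the range of $g$, and $h(x)=y_0$ otherwise. This is first-order definable, and it is surjective, so (\ref{prop:t0proves43}) fails.

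The main obstacle is the degenerate case $Y=\emptyset$ in this last step. There the empty map $\emptyset\rightarrow\emptyset$ is vacuously a surjection, while if $X\neq\emptyset$ there is no function $X\rightarrow\emptyset$ at all, so (\ref{prop:t0proves43}) holds while (\ref{prop:t0proves42}) fails. So the equivalence genuinely requires a convention. I would state explicitly either that $Y$ is assumed nonempty, or that when $Y=\emptyset$ the relevant clause is read appropriately. Once that is settled, everything else is routine first-order comprehension plus Dedekind finitude.
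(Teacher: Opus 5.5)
Your argument is essentially the paper's: it uses the same three constructions (the inverse of a bijective $f$, the $\preceq$-least-preimage section of a surjection, and $g^{-1}$ padded by a default point $y_0$) together with Proposition~\ref{prop:t0proves3}, merely arranged as (\ref{prop:t0proves41})$\Leftrightarrow$(\ref{prop:t0proves42})$\Leftrightarrow$(\ref{prop:t0proves43}) with a direct Dedekind-finiteness argument for (\ref{prop:t0proves42})$\Rightarrow$(\ref{prop:t0proves41}), where the paper runs the cycle (\ref{prop:t0proves41})$\Rightarrow$(\ref{prop:t0proves42})$\Rightarrow$(\ref{prop:t0proves43})$\Rightarrow$(\ref{prop:t0proves41}). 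Your degenerate case is a genuine error in the statement rather than a matter of convention, and the paper's own proof of (\ref{prop:t0proves43})$\Rightarrow$(\ref{prop:t0proves41}) overlooks it by silently choosing $y_0\in Y$: for $Y=\emptyset\neq X$, clause (\ref{prop:t0proves43}) holds vacuously while (\ref{prop:t0proves41}) and (\ref{prop:t0proves42}) fail, so (\ref{prop:t0proves43}) should read ``$Y\neq\emptyset$ and there is no surjection $h:X\rightarrow Y$''; this is a bounded first-order conjunct, so the $\Pi^1_1$-definition of $\left|X\right|<\left|Y\right|$ used in Theorem~\ref{thmmostdefinabel} survives, whereas without it $\mathsf{Most}(F,G)$ would come out true whenever $F\neq\emptyset$ and $F\cap G=\emptyset$.
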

\begin{proof}
First suppose (\ref{prop:t0proves41}); we show (\ref{prop:t0proves42}).  Suppose that there is an injection $f:X\rightarrow Y$ but there is no injection $g:Y\rightarrow X$. Then we claim that $f:X\rightarrow Y$ is not a surjection. If it was, then its inverse $f^{-1}:Y\rightarrow X$ would be an injection.

Second suppose (\ref{prop:t0proves42}); we show (\ref{prop:t0proves43}). Suppose that $f:X\rightarrow Y$ is an injection but not a surjection. Suppose for reductio that there was a surjection $h:X\rightarrow Y$. Then consider the binary relation $R$ consisting of pairs $(y,x)$ in $Y\times X$ such that $h(x)=y$ and there is no $x_0\prec x$ with $h(x_0)=y$. The binary relation $R$ exists by first-order comprehension. Then $R$ defines the graph of an injective function $g:Y\rightarrow X$. But then with $f,g$ we have both disjuncts of Proposition~\ref{prop:t0proves3}.

Third suppose (\ref{prop:t0proves43}); we show (\ref{prop:t0proves41}). If we had an injection $g:Y\rightarrow X$, then we could choose a point $y_0$ of $Y$, and consider the binary relation $R$ consisting of pairs $(x,y)$ in $X\times Y$ such that if $x$ is in the range of $g$, then $g(y)=x$, while if $x$ is not in the range of $g$, then $y=y_0$. The binary relation $R$ exists by first-order comprehension. Then $R$ defines the graph of a surjective function $h:X\rightarrow Y$. Hence, there is no injection $g:Y\rightarrow X$. Then by Proposition~\ref{prop:t0proves3}, there is an injection $f:X\rightarrow Y$ which is not a surjection. Hence, there is an injection $f:X\rightarrow Y$, but there is no injection $g:Y\rightarrow X$.
\end{proof}

Finally, we can prove:
\thmmostdefinabel*
\begin{proof}
For (\ref{thmmostdefinabel:1}), it suffices to note that the relation $\left|X\right|< \left|Y\right|$ is bounded $\Delta^1_1$-definable in $\scompfin$. Recall from \S\ref{sec:intro} that we define $\left|X\right|< \left|Y\right|$ in second-order logic as: there is an injection $f:X\rightarrow Y$ but there is no injection $g:Y\rightarrow X$. This condition is just what Proposition~\ref{prop:t0proves4}(\ref{prop:t0proves41}) says. And Proposition~\ref{prop:t0proves4}(\ref{prop:t0proves42}) provides the bounded $\Sigma^1_1$-definition, and Proposition~\ref{prop:t0proves4}(\ref{prop:t0proves43}) provides the bounded $\Pi^1_1$-definition. 

For (\ref{thmmostdefinabel:2}), to show Henkin domain independence, we simply appeal to (\ref{thmmostdefinabel:1}) and Corollary~\ref{cor:stable}(\ref{cor:stable:3}). Finally, to see that $\mathsf{Most}$ and $\mathsf{More\mbox{-}then}$ are Henkin conservative, note that this is true for the same reason that they are conservative on standard models. In the case of $\mathsf{Most}$ one simply notes that $F\cap G$ is the same as $F\cap (F\cap G)$, and that $F\setminus G$ is the same as $F\setminus (F\cap G)$ (cf. Example~\ref{ex:most}). In the case of $\mathsf{More\mbox{-}than}$ one simply notes that $F\cap H$ is the same as $F\cap ((F\cup G)\cap H)$, and that $G\cap H$ is the same as $G\cap ((F\cup G)\cap H)$ (cf. Example~\ref{ex:type111conserv}).
\end{proof}

\section{Further Questions}\label{sec:further}

In closing, four further issues bear investigating. 

First, following the tradition of generalized quantifiers, this paper works in second-order logic and with its models. But in many applications one will be working with $\omega$-th order logic or with a type theory. It is not clear whether any of the results of this paper extend to those natural settings. For, the tools from first-order model theory which are used all depend on being able to reduce the ``top layer'' second-order objects to various kinds of definable first-order formulas, in certain elementary extensions. These methods will break down if there is no top layer and if higher layers can influence lower layers.

Second, all of the proofs in this paper are highly model-theoretic, and go through the rich method of recursively saturated models initiated by Barwise-Schlipf.\footnote{\cite{Barwise1975-xg}.} However, as mentioned, Theorem~\ref{thm:fefermanesq} basically follows from Feferman's Preservation Theorem, which Feferman originally proved by proof-theoretic means. It would be valuable to go through the arguments of this paper using proof-theoretic methods. Perhaps these methods could provide more of a procedure for i.e. going from a generalized quantifier satisfying Henkin domain independence to its $\Delta^1_1$-definition. The arguments of this paper, by contrast, provide no such information since they proceed by non-constructive compactness arguments.

Third, it seems worthwhile to explore the idea of Henkin domain independence in the setting of a modal logic. One can view the approach adumbrated in this paper as the rudiments of a Kripke semantics for such a logic, whereby the worlds are the Henkin models and the accessibility relation is the substructure relation. As a first attempt, one might try to formalize principles such as the following, which seems to capture some part of Henkin upwards independence in the case of type~$(1,1)$ generalized quantifiers:
\begin{equation}\label{eqn:modalrigid}
\forall \; X \; \forall \; Y \; \big(Q(X,Y) \leftrightarrow \Box \;Q(X,Y)\big)
\end{equation}
A natural question would be what modal logic, if any, was sound and complete for the aforementioned semantics, and whether these resulted in (\ref{eqn:modalrigid}) or strengthenings thereof having implications for definability.

Fourth, it is worth examining closer the relation between Theorems~\ref{thm:fefermanesq}, \ref{thm:fefermanesqfour} and the Conservativity Theorem of Keenan-Stavi.\footnote{\cite[276, 317 ff]{Keenan1986-pq}, \cite[p. 248, Fact 5]{Gamut1991-gc}, \cite[305-6]{Glanzberg2006-is}, \cite[321]{Westerstahl2007-fb}.} This result of Keenan-Stavi gives a ``from below'' characterization of conservative quantifiers on standard domains, namely as those containing certain basic generalized quantifiers and closed under the Boolean operations and a relativisation notion.\footnote{At least, it is usually presented as a result about standard models. But it is not hard to see that with minimal modification it can be recast as a theorem of higher-order logic.} However, while an immensely elegant characterization, it is sometimes thought to depend too much on the underlying first-order domain, and to be ``mere enumeration.''\footnote{\cite[10-11]{van-Benthem1986-qu}; cf. \cite[305-6]{Glanzberg2006-is}, \cite[321]{Westerstahl2007-fb}.} But in other settings, $\Delta^1_1$-definitions have equivalent characterizations which do not depend heavily on the underlying domain and which give voice to some kind of construction from below. These can be hard-fought results, such as Souslin's Theorem, or the characterizations of $\Delta^1_1$-definable sets of natural numbers in terms of Kleene's~$\mathcal{O}$.\footnote{\cite[Chapters 4-5]{Ash2000-qv}. This was generalized by Barwise and others to the setting of admissible sets (e.g. \cite[\S{IV.3}]{Barwise1975-oe}).} In other cases, such as in $\mathsf{NP}\cap \mathsf{co\mbox{-}NP}$, there are to my knowledge no known ``from below'' characterizations. One natural question then is whether one can refine the Keenan-Stavi theorem to show that the bounded $\Delta^1_1$-definable generalized quantifiers are built up from below from simpler ones in a constructive manner which does not depend too heavily on the underlying domain.

\stoptoc

\section{Acknowledgments and funding}

\subsection{Acknowledgments}

Thanks to Dylan Bumford, Salvatore Florio, James Gu, Ed Keenan, Ned Sanger, and the referees and editors for feedback and comments.

\subsection{Funding}

There is no external funding to report.

\resumetoc

\bibliographystyle{amsplain}
\bibliography{bibliography.bib}

\end{document}